\documentclass[a4paper,report, 11pt]{article}
\linespread{1.3}
\usepackage{amsmath,a4wide}
\usepackage{amssymb}
\usepackage{amscd}
\usepackage{epsfig}
\usepackage{graphics}
\usepackage{multirow}

\usepackage{graphicx}

\usepackage{subfigure} 
\usepackage{epstopdf}
\usepackage{booktabs}

\usepackage{ifpdf}
\usepackage[a4paper,top=1.5in, bottom=1.5in, left=0.5in, right=0.5in]{geometry}
\usepackage{changes}
\usepackage[makeroom]{cancel}
\usepackage{bbm}


\usepackage{amsthm}
\usepackage{color}
\definecolor{amaranth}{rgb}{0.9, 0.17, 0.31}	
\definecolor{myblue}{rgb}{0.2,0,0.9}
\definecolor{auburn}{rgb}{0.43, 0.21, 0.1}
\definecolor{bittersweet}{rgb}{1.0, 0.44, 0.37}
\definecolor{blue-violet}{rgb}{0.54, 0.17, 0.89}
\usepackage[authoryear]{natbib}

\usepackage{hyperref}

\hypersetup{
	colorlinks,linkcolor=myblue,citecolor=amaranth,filecolor=magenta,urlcolor=myblue}

\newtheorem{pro}{Proposition}[section]
\newtheorem{pr}{Problem}

\newtheorem{thm}{Theorem}[section]

\newtheorem{lem}{Lemma}[section]

\newtheorem{as}{Assumption}[section]
\newtheorem{rem}{Remark}[section]

\def\d{\delta}
\def\D{\Delta}
\def\e{\epsilon}

\def\m{\mu}

\def\s{\sigma}
\def\t{\theta}

\def\k{\kappa}


\begin{document}
	
	\title{\LARGE {\bf A problem of optimal switching and singular control with discretionary stopping in portfolio selection \footnote{ Junkee Jeon is supported by the National Research Foundation of Korea (NRF) grant funded by the Korea government [Grant No. NRF-2020R1C1C1A01007313]. Hyeng Keun Koo is supported by NRF grant [Grat No. NRF-2020R1A2C1A01006134].
	 }}}
	
	\author{
		Junkee Jeon \footnote{E-mail: {\tt junkeejeon@khu.ac.kr}\;Department of Applied Mathematics, Kyung Hee University, Korea.} 
		\and
		Hyeng Keun Koo\footnote{E-mail: {\tt hkoo@ajou.ac.kr}\;Department of Financial Engineering, Ajou University, Korea.}
	}
	
	\date{\today}
	
	\maketitle \pagestyle{plain} \pagenumbering{arabic}
	
                    	\abstract{In this paper we study the optimization problem of an economic agent who chooses a job and the time of retirement as well as consumption and portfolio of assets. The agent is constrained in the ability to borrow against future income. We transform the problem into a dual two-person zero-sum game, which involves a controller, who is a minimizer and chooses a non-increasing process, and a stopper, who is a maximizer and chooses a stopping time.
                        We derive the Hamilton-Jacobi-
                    	Bellman quasi-variational inequality(HJBQV) of a max-min type arising from the game. We provide a solution to the HJBQV and verification that it is the value of the game. We establish a duality result which allows to derive the optimal strategies and value function of the primal problem  from those of the dual problem. }
	
	\vspace{1.0cm} {\em Keywords} : consumption, portfolio selection, job switch, early retirement, borrowing constraint, zero-sum game, Nash-equilibrium, Hamilton-Jacobi-Bellman quasi-variational inequality \\

\newpage

\section{Introduction}

In this paper we study the optimization problem of an economic agent who chooses a job and the time of retirement as well as consumption and portfolio of assets.
There has been increasing interest in studying life-cycle patterns of consumption and asset allocation. An important realistic aspect in the study is to incorporate labor supply and human capital and their effects on consumption and risky investments. Job choice and retirement decisions are two important factors determining labor supply and human capital. 

As the average life span and flexibility in the job market increase, there exist higher chances that people change their jobs, being motivated by the consideration of leisure and job satisfaction than that of higher salaries (\citet{SKS}). Thus, a job which promises more leisure time but with a lower salary can be a substitute for retirement. Taking such a job allows one to have an option to choose another job with a higher salary and less leisure time and thus to keep a positive value of human capital, while retirement is an irreversible decision and makes human capital  equal to 0 permanently. We study the choice between the two substitutes, a job with more leisure time and lower salary and permanent retirement in this paper. 

Another important aspect regarding human capital is the constraint which restricts people from  borrowing against future income.  
Researchers have shown that the borrowing constraint has significant effects on consumption, investment, labor supply, and wealth accumulation (\citet{HP}, \citet{DF2006}, \citet{Rendon2006}, \citet{DL2010}). We study the optimization problem of a borrowing constrained agent. 

Mathematically, the problem is a combination of continuous control, discrete control and optimal stopping. The optimal retirement problem alone introduces complication, as retirement decision interacts with optimal consumption and portfolio choice (\citet{YK}). The borrowing constraint significantly complicates the problem, as it makes the financial market incomplete from the agent's perspective (\citet{DL2011}).

	We extend the previous work on optimal consumption and portfolio choice (\citet{FP2007}, \citet{DL2010, DL2011}, and \citet{Lee-et-al-2019}) by considering the job choice or retirement decision. Moreover, we employ a class of general utility functions.  \citet{FP2007} and \citet{DL2010, DL2011} consider the optimization problem with a constant relative risk aversion(CRRA) utility function. Thanks to the homogeneity of the CRRA utility function, the free boundaries associated with retirement and borrowing constraints are characterized by only one algebraic equation in their work. In contrast, those in our problem are expressed as solutions of two coupled algebraic equations, which makes the problem more complex and difficult.

 There is a substantial work investigating the combined problem of the  optimal retirement decision and the borrowing constraint (\citet{FP2007}, \citet{CSS2007}, \citet{DL2010, DL2011}, \citet{LS2011}). While these studies utilize the dual method,  the dual problem is not  specified in a clear and comprehensive manner,  and  often lack  mathematical rigor for the verification of the dual problem and the duality theorem. To clarify the issue, we take a novel approach by transforming the problem into a two-person zero-sum game, which involves a controller, who is a minimizer and chooses a non-increasing process, and a stopper, who is a maximizer and chooses a stopping time.

Specifically, the transformation involves two critical steps. First, we transform the wealth dynamics into a budget constraint in static form. We use the martingale approach developed by \citet{KLS} and \citet{CoxH}. In order to incorporate the borrowing constraint we introduce a non-increasing shadow  price process as in \citet{HP} and \citet{KarouiJeanblanc1998}. Next we adopt the approach to change the order of optimization with in \citet{KW2000} and consider the problem of finding the optimal retirement time after selecting the optimal consumption in a dual problem. After the two steps are carried out, the problem becomes that involving  the choice of an optimal retirement time which maximizes a dual objective function and that of a shadow price process which minimizes the dual objective function. Hence, the problem can be cast into a two-person zero-sum game; one player maximizes the objective by choosing the retirement time and the other player minimizes the objective by choosing the shadow price process. To our knowledge, this paper is the first that clarified the dual problem in the optimal retirement decision of a borrowing constrained agent.

We derive a Hamilton-Jacobi-Bellman quasi-variational inequality (HJBQV) which is satisfied by the value function of the game. The {HJBQV} involves both minimum and maximum. We construct {an explicit-form} solution of the HJBQV and provide a verification that the solution is the value function of the game. Finally we establish a duality theorem which allows to derive the value function of the original problem from that of the game. By the duality theorem we can derive the optimal strategies of the primal problem from those of the dual problem. 

The job choice is made by considering the trade-off between job satisfaction and salary. The agent chooses a job whenever the utility value from one job is greater than that from the other job, where the utility value includes that from the wage income. 

The dual objective function has two components: (i) the present value of the convex conjugate of optimal consumption, and (ii) the present value of the difference between the utility values after and before retirement. We show that the retirement decision is made when the  difference at the time is negative, i.e., the instantaneous utility after retirement is smaller than that before retirement. This is because retirement is an irreversible decision similar to  exercise of a financial option; the exercise of an option occurs when it is in the money (\citet{DixitPindyck}). The characteristic of the problem as a game, however, implies that the decision is influenced by the presence of the borrowing constraint. 

 The literature on the two person zero-sum game of a controller and stopper includes \citet{MS96},  \citet{W06}, \citet{KZ08}, \citet{H06}, \citet{BY11}, \citet{HSZ15}, \citet{HY15}, and references therein. In particular, the mathematical structure in our dual problem is similar to that in \citet{HSZ15} and \citet{HY15}, as the two parties are a singular controller and a discretionary stopper. We add to the literature by considering a problem of optimal switching and singular control with discretionary stopping in portfolio selection.


The paper is organized as follows. We explain the model and the optimization problem in Section \ref{sec:model}. We explain a dual problem by formulating a Lagrangian and the transformation into a two-person zero-sum game in Section \ref{sec:optimization}. We derive a HJBQV and provide optimal strategies for the optimization problem in Section \ref{sec:optimal strategies}. We conclude in Section \ref{sec:conclusion}.


\section{Model}\label{sec:model}

We consider an agent whose objective is to maximize the following expected utility: 
\begin{equation}\label{eq:expected-utility}
U\equiv \mathbb{E}\left[\int_0^{\tau}e^{-\d t}\left(u(\k_1 c_t){\bf 1}_{\{\zeta_t=\mathfrak{B}_1 \}}+u(\k_2 c_t){\bf 1}_{\{\zeta_t=\mathfrak{B}_2 \}}\right)dt +{{\bf 1}_{\{\tau<\infty \}}} \int_{\tau}^\infty e^{-\d t}u(c_t)dt\right].
\end{equation}
Here $\d>0$ is the subjective discount rate,  $c_t\ge 0$ is the  rate of consumption at time $t$, ${\bf 1}_A$ is the indicator function of set $A$,  $\zeta_t\in\{\mathfrak{B}_1, \mathfrak{B}_2 \}$ is the agent's  job  at $t$,  $\tau$ is the time of retirement,  $\k_i>0\ (i=1,2)$ is a constant, and $u(\cdot)$ is the felicity function of consumption. For simplicity, we assume that there exist two different jobs available to the agent, that is, $\zeta_t$ takes one of the two values, $\mathfrak{B}_1$ and $\mathfrak{B}_2$. Constant $\k_i$ describes the agent's satisfaction with job $\mathfrak{B}_i,$ a larger value implying higher satisfaction with the job. 

The agent receives labor income at a rate $\e_i$ when she takes job $\mathfrak{B}_i.$ 
 We assume that 
$$
0<\e_2 <\e_1\;\;\;\mbox{and}\;\;0<\k_1<\k_2<1.
$$
That is, job $\mathfrak{B}_1$ provides higher income but lower satisfaction than job $\mathfrak{B}_2.$  Here, $\k_i<1$ means that retirement provides higher satisfaction than working, and the marginal utility of consumption is greater after retirement than before retirement. 

The agent has two options: the first is to to switch jobs when working, and the second  is to retire voluntarily. After retirement, the agent cannot go back to work, and thus retirement is an  irreversible decision. In contrast, the  job switching decision is freely reversible; the agent can switch from the current job to the other job at  any time before  retirement. \\ 

\noindent {\bf Financial Market:} There exist two financial assets trading in the economy, a riskless asset and a risky asset, whose prices at $t$ are denoted by $S_{0,t}$ and $S_{i,t}$, respectively.  The asset prices satisfy the dynamics: 
$$
dS_{0,t}/S_{0,t} = {r} dt \;\;\;\mbox{and}\;\;\;dS_{1,t}/S_{1,t}= {\mu} dt + \s dW_t, 
$$
where ${r}>0$ is the risk-free rate, ${\mu}>{r}$ is the drift of the risky asset price, $\s$ is the volatility of the risky asset returns, and $W_t$ is a standard Brownian motion on  $(\Omega, \mathcal{F}_\infty, \mathbb{P}).$ We will denote the augmented filtration generated by the Brownian motion $W_t$ by $\mathcal{F}=\{\mathcal{F}_t\}_{t\ge 0}$. Here, for simplicity of the model, we assume that the investment opportunity is constant, that is, the interest rate, the mean and standard deviation of the risky asset returns are constant. Under the constant investment opportunity assumption, the assumption of two assets is without loss of generality; by the two-fund separation theorem, the general case where there exist multiple risky assets with a constant covariance matrix of  returns can be subsumed in our model by treating the market portfolio of risky assets in the general case as the single risky asset in our model (see \citet{GL}).\\

\noindent {\bf Wealth and Budget Constraint:} We will now explain the budget constraint of the agent. Let us denote the agent's  investment in the risky asset at time $t$ by $\pi_t$ (in dollar amount). The agent's wealth follows the  dynamics:
\begin{equation}\label{eq:wealth-dynamics}
dX_t = \left[rX_t + (\mu-r)\pi_t - c_t + \left(\e_1{\bf 1}_{\{\zeta_t=\mathfrak{B}_1 \}}+\e_2{\bf 1}_{\{\zeta_t=\mathfrak{B}_2 \}}\right){\bf 1}_{\{t<\tau\}} \right]dt+\s \pi_t dW_t.
\end{equation}
with $X_0=x$

Let  $\t:=(\m-r)/\s$, the risk premium on the return of the risky asset for one unit of standard deviation, or the Sharpe ratio. We define the stochastic discount factor, ${\cal H}_t$:
$$
{\cal H}_t \equiv \exp\left(-\left\{r+\dfrac{1}{2}\t^2 \right\}t-\t W_t\right).
$$
Since $\e_1>\e_2$, the natural limit to the agent's borrowing constraint is determined by the present value of the income stream:
\begin{equation}
X_t \ge -\mathbb{E}_t \left[\int_t^\infty \dfrac{{\cal H}_s}{{\cal H}_t} \e_1 ds\right]=-\dfrac{\e_1}{r},\;\;\;\mbox{for all}\;\;t\ge 0,
\end{equation}
where $\mathbb{E}_t[\cdot]=\mathbb{E}_t[\cdot\mid {\cal F}_t]$ is the  expectation conditional on the filtration ${\cal F}_t$.

We consider the \textit{ borrowing constraint} which restricts the agent from borrowing against future labor income:
\begin{equation}\label{eq:borrowing-constraint}
X_t \ge 0 \;\;\;\mbox{for all}\;\;t>0.
\end{equation}

Given $X_0=x$, we call a quadruple $(\d,c, \pi, \tau)$ admissible if 
\begin{itemize}
	\item[(a)] the job process $\zeta:=(\zeta_t)_{t=0}^\infty$ is ${\cal F}_t$-adapted and  takes one of the two values, $\mathfrak{B}_1$ and $\mathfrak{B}_2$,
	\item[(b)] $c_t \ge 0$ and $\pi_t$ are ${\cal F}_t$-progressively measurable processes satisfying the following integrability conditions:
	\begin{align*}
	\int_0^t c_s ds <\infty \;\;\mbox{a.s.},\;\;\;\mbox{and}\;\;\;\int_0^t \pi_s^2 ds<\infty\;\;\mbox{a.s.}\;\;\;\forall\;t\ge 0,
	\end{align*}
	\item[(c)] $\tau$ belongs to ${\cal S}$ which is the set of all ${\cal F}$-stopping times taking values in $[0,\infty)$,
	\item[(d)] the agent's financial wealth $X_t$ in \eqref{eq:wealth-dynamics} for all $t \ge 0$ satisfies 
	$$
	X_t \ge 0 .
	$$
\end{itemize}
We denote the set of all admissible strategies by ${\cal A}(x)$. \\

 We make the following assumptions on the felicity function to guarantee the existence of a
solution to the agent’s optimization problem:
\begin{as}\label{as:utility1}
	The felicity function $u:[0,\infty)\to \mathbb{R}$ is strictly increasing, strictly concave and continuously differentiable, and $\lim_{c\to +\infty}u'(c) = 0$. 
\end{as}

The strictly decreasing and continuous function $u':(0,\infty) \overset{\textrm{onto}}{\longrightarrow} (0,u'(0))$ has a strictly decreasing, continuous inverse $I:(0,u'(0))\overset{\textrm{onto}}{\longrightarrow}(0,\infty)$. We extend $I$ by setting $I(y)=0$ for $y\ge u'(0)$. Then, we have 
\begin{align*}
u'(I(y))=\begin{cases}
y,\;\;\;&0<y<u'(0),\\
u'(0),\;\;\;&y\ge u'(0),
\end{cases}
\end{align*}
and $I(u'(c))=c$ for $0<c<\infty$. Note that $\lim_{y\to \infty}I(y) = 0$. 

\begin{as}\label{as:utility2}
	For $y>0$, 
	\begin{align*}
	\int_0^y \eta^{-n_2}I(\eta)d\eta < \infty.
	\end{align*}
where	$n_1>0$ and $n_2<0$ are two roots of the quadratic equation:
	\begin{equation}\label{eq:quadratic}
	\dfrac{\t^2}{2}n^2 + \left(\d-r-\frac{\t^2}{2}\right)n-\d=0.\;\;\;
	\end{equation}
\end{as}

We now state the agent's optimization problem.
\begin{pr}\label{pr:main} Given $X_0=x>0$, we consider the following agent's utility maximization problem:
	\begin{equation}
	V(x) =\sup_{(c,\pi, \zeta, \tau)\in \mathcal{A}(x)}\mathbb{E}\left[\int_0^{\tau}e^{-\d t}\left(u(\k_1 c_t){\bf 1}_{\{\zeta_t=\mathfrak{B}_1 \}}+u(\k_2 c_t){\bf 1}_{\{\zeta_t=\mathfrak{B}_2 \}}\right)dt + {{\bf 1}_{\{\tau<\infty \}}}\int_{\tau}^\infty e^{-\d t}u(c_t)dt \right].
	\end{equation}
\end{pr}

\noindent Our strategy of solving Problem \ref{pr:main} consists of the following steps. 
\begin{itemize}
	\item[(Step 1)] 	By using the well-known method developed by \citet{KS} and \citet{CoxH}, we transform Problem \ref{pr:main}  into a static problem.
	\item[(Step 2)] Using the  budget constraint in static form, we formulate a Lagrangian for Problem \ref{pr:main}. By maximizing the Lagrangian, we have the candidates of optimal consumption and job. Putting these in the Lagrangian, we obtain the dual problem, which takes the form of a two-person zero-sum game of a singular stochastic controller and a discretionary stopper.
	\item[(Step 3)] Utilizing the dynamic programming principle, we derive the Hamilton-Jacobi-
	Bellman quasi-variational inequality(HJBQV) arising from the game. We provide an explicit-from solution to the HJBQV and verification that it is the value of the game.
	\item[(Step 4)] Finally, we establish the duality theorem and characterize the optimal strategies.
\end{itemize}

\section{A Dual Optimization Problem}\label{sec:optimization}

We use the martingale-dual approach  (see \citet{KLS}, \citet{CoxH}) which allows us to use the Lagrangian method to solve Problem \ref{pr:main}. In order to apply the approach we need to transform the dynamic constraint \eqref{eq:borrowing-constraint}  into a static form. \citet{HP} and \citet{KarouiJeanblanc1998} study a consumption and portfolio selection problem with constraint \eqref{eq:borrowing-constraint} by introducing a non-increasing process, which can be thought of as the integral of  infinitesimal Lagrange multipliers, but they do not consider the job-switching options and irreversible retirement decision. Recently, \citet{Lee-et-al-2019} investigate optimal job switching and consumption-investment problems under the borrowing constraint, but they also do not consider the irreversible retirement decision. Our problem described in Problem \ref{pr:main}, however, involves the job switching, borrowing constraint, and irreversible retirement decision. To obtain a constraint of static form for our problem, we combine the results of Lemma 6.3 in \citet{KW2000}, Proposition 2.2 in \citet{KarouiJeanblanc1998}, and Lemmas 1 and  2 in \citet{Lee-et-al-2019}. As a result, we derive the following proposition.

 {Let ${\cal NI}$ be the set of all non-negative, non-increasing, right-continuous processes with left limits(RCLL) and starting at $1$. }
\begin{pro}~\label{pro:static-budget}
	\begin{itemize}
		\item[(a)] For any given $\tau\in{\cal S}$, the triplet $(c,\zeta, \tau)$ of the consumption, job, and retirement strategies such that $(c,\pi, \zeta, \tau)\in{\cal A}(x)$ with a portfolio $\pi$, satisfies the following budget constraint: 
		\begin{equation}\label{eq:static1}
		\sup_{{\cal D}\in{\cal NI}}\mathbb{E}\left[\int_0^\tau {\cal H}_t {\cal D}_t\left(c_t-(\e_1{\bf 1}_{\{\zeta_t=\mathfrak{B}_1 \}} +\e_2{\bf 1}_{\{\zeta_t=\mathfrak{B}_2 \}})\right)dt+{{\bf 1}_{\{\tau<\infty \}}}{\cal H}_{\tau}X_{\tau}{\cal D}_{\tau-}\right] \le x. 
		\end{equation}
		
		Budget constraint \eqref{eq:static1} is equivalent to
		\begin{equation}\label{eq:static2}
		\sup_{\xi \in {\cal S}_{\tau}}\mathbb{E}\left[\int_0^{\xi} {\cal H}_t \left(c_t-(\e_1{\bf 1}_{\{\zeta_t=\mathfrak{B}_1 \}} +\e_2{\bf 1}_{\{\zeta_t=\mathfrak{B}_2 \}})\right)dt+{{\bf 1}_{\{\xi=\tau<\infty \}}}{\cal H}_{\tau}X_{\tau}\right] \le x,
		\end{equation}
	where ${\cal S}_{\tau}$ is the set of all stopping times such that $0\le \xi 	\le \tau$.

		\item[(b)] For any $(c,\zeta,\tau)$ satisfying budget constraint \eqref{eq:static2}, then there exists a portfolio process $\pi$ such that $(c,\pi, \zeta, \tau)\in{\cal A}(x)$.  Moreover, for $t\in[0,\tau)$, 
		\begin{equation}
			{\cal H}_tX_t = \mathbb{E}\left[\int_t^\tau {\cal H}_s \left(c_s-(\e_1{\bf 1}_{\{\zeta_s=\mathfrak{B}_1 \}} +\e_2{\bf 1}_{\{\zeta_s=\mathfrak{B}_2 \}})\right)ds +{\bf 1}_{\{\tau<\infty\}}{\cal H}_{\tau}X_{\tau}\right].
		\end{equation}
	\end{itemize}
\end{pro}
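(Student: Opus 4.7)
The plan is to combine the martingale--dual approach of \citet{KLS,CoxH} with the shadow-price technique of \citet{HP} and \citet{KarouiJeanblanc1998} for the borrowing constraint, and with the change-of-order-of-optimisation argument of \citet{KW2000} for the discretionary retirement time, in the spirit of \citet{Lee-et-al-2019}. The non-increasing process $\mathcal{D}\in\mathcal{NI}$ plays the role of a cumulative Lagrange multiplier enforcing the pathwise constraint $X_t\geq 0$, while the inner stopping time $\xi\in\mathcal{S}_\tau$ in \eqref{eq:static2} arises from realising $\mathcal{D}$ through indicator processes $\mathbf{1}_{[0,\xi)}$.

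For part~(a), I would first apply the It\^o product formula to $\mathcal{H}_tX_t$. Using $d\mathcal{H}_t=-r\mathcal{H}_t\,dt-\theta\mathcal{H}_t\,dW_t$ together with the wealth dynamics \eqref{eq:wealth-dynamics} and the identity $\theta\sigma=\mu-r$, the $rX_t$ drift and the Brownian cross-variation cancel, leaving
\[
d(\mathcal{H}_tX_t)=\mathcal{H}_t\bigl[-c_t+\bar\epsilon(\zeta_t)\mathbf{1}_{\{t<\tau\}}\bigr]dt+\mathcal{H}_t(\sigma\pi_t-\theta X_t)\,dW_t,
\]
where $\bar\epsilon(\zeta_t):=\epsilon_1\mathbf{1}_{\{\zeta_t=\mathfrak{B}_1\}}+\epsilon_2\mathbf{1}_{\{\zeta_t=\mathfrak{B}_2\}}$ is a convenient shorthand. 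Multiplying by the finite-variation process $\mathcal{D}$ via integration by parts (no covariation term, since $\mathcal{H}_tX_t$ is continuous), integrating on $[0,\tau\wedge n]$, taking expectations, and passing to the limit in $n$ by localisation and Fatou yields
\[
\mathbb{E}\!\left[\int_0^{\tau}\mathcal{H}_t\mathcal{D}_t\bigl(c_t-\bar\epsilon(\zeta_t)\bigr)dt+\mathbf{1}_{\{\tau<\infty\}}\mathcal{H}_{\tau}X_{\tau}\mathcal{D}_{\tau-}\right]=x+\mathbb{E}\!\left[\int_{[0,\tau)}\mathcal{H}_tX_t\,d\mathcal{D}_t\right]\leq x,
\]
because $X_t\geq 0$ and $\mathcal{D}$ is non-increasing force the last integral to be non-positive; the $\mathcal{D}_{\tau-}$ (rather than $\mathcal{D}_\tau$) boundary contribution falls out automatically when one collects the jump of $\mathcal{D}$ at $\tau$. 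A supremum over $\mathcal{D}\in\mathcal{NI}$ produces \eqref{eq:static1}. The equivalence with \eqref{eq:static2} then splits into two directions: the trivial one tests $\mathcal{D}_t=\mathbf{1}_{\{t<\xi\}}$ for $\xi\in\mathcal{S}_\tau$ against \eqref{eq:static1}, observing $\mathcal{D}_{\tau-}=\mathbf{1}_{\{\xi\geq\tau\}}=\mathbf{1}_{\{\xi=\tau\}}$ since $\xi\leq\tau$; the non-trivial direction relies on the randomised-stopping-time representation $\mathcal{D}_t=\mathbb{P}(\xi>t\mid\mathcal{F}_t)$ available for every $\mathcal{D}\in\mathcal{NI}$, as in Lemma~6.3 of \citet{KW2000} and Proposition~2.2 of \citet{KarouiJeanblanc1998}, after which a Fubini exchange converts the $\mathcal{D}$-weighted expectation into one stopped at $\xi$, and the classical Snell-envelope reduction replaces the randomised $\xi$ by an $\mathcal{F}$-stopping time at no loss.

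For part~(b), given $(c,\zeta,\tau)$ satisfying \eqref{eq:static2}, I would \emph{define} $\mathcal{H}_tX_t$ for $t\in[0,\tau)$ by the stated conditional-expectation formula, with the terminal $X_\tau$ supplied by the unconstrained post-retirement Merton construction driven by $c$ on $[\tau,\infty)$. The process $N_t:=\mathcal{H}_tX_t+\int_0^{t\wedge\tau}\mathcal{H}_s(c_s-\bar\epsilon(\zeta_s))\,ds$ is then a uniformly integrable $\mathcal{F}_t$-martingale with $N_0=x$; the Brownian martingale representation theorem supplies a predictable integrand $\varphi$ with $dN_t=\varphi_t\,dW_t$, and matching diffusion coefficients against the It\^o expansion of $\mathcal{H}_tX_t$ recovers $\pi_t=(\varphi_t/\mathcal{H}_t+\theta X_t)/\sigma$. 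The sole delicate admissibility check, $X_t\geq 0$, follows from applying \eqref{eq:static2} with the stopping time $\xi=t\wedge\tau$ together with strict positivity of $\mathcal{H}$, in the manner of Lemmas~1 and~2 of \citet{Lee-et-al-2019}.

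The main obstacle is precisely the equivalence of \eqref{eq:static1} and \eqref{eq:static2}: reducing a supremum over the infinite-dimensional set $\mathcal{NI}$ to one over the stopping-time family $\mathcal{S}_\tau$ is where the non-trivial probabilistic input -- the randomised-stopping-time representation combined with a Snell-envelope reduction -- enters, and where the results of \citet{KW2000}, \citet{KarouiJeanblanc1998}, and \citet{Lee-et-al-2019} are doing the bulk of the work. The remaining pieces, namely the It\^o / integration-by-parts derivation of \eqref{eq:static1} in part~(a) and the martingale-representation construction of $\pi$ in part~(b), are routine.
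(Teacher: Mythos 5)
Your overall route coincides with the paper's: the paper gives no detailed proof of Proposition \ref{pro:static-budget}, deriving it instead by combining Lemma 6.3 of \citet{KW2000}, Proposition 2.2 of \citet{KarouiJeanblanc1998} and Lemmas 1--2 of \citet{Lee-et-al-2019}, which is exactly the toolkit you invoke. Your part (a) is in line with that intended argument: the integration-by-parts/localisation step (note that after Fatou you should only claim the inequality ``$\le x$'', which is all that is needed, and you need the bound $\mathbb{E}[\int_0^\infty {\cal H}_t\e_1\,dt]=\e_1/r<\infty$ to control the sign-indefinite integrand), the test processes ${\cal D}_t={\bf 1}_{\{t<\xi\}}$ for the easy direction of the equivalence, and the randomised-stopping-time representation for the hard direction, which is precisely what the cited Lemma 6.3 supplies.

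The genuine gap is in part (b). If you \emph{define} ${\cal H}_tX_t:=\mathbb{E}\bigl[\int_t^\tau{\cal H}_s\bigl(c_s-\bar{\epsilon}(\zeta_s)\bigr)ds+{\bf 1}_{\{\tau<\infty\}}{\cal H}_\tau X_\tau\mid{\cal F}_t\bigr]$ and obtain $\pi$ by martingale representation, then nonnegativity of $X$ does \emph{not} follow from \eqref{eq:static2} applied at $\xi=t\wedge\tau$: that inequality bounds an unconditional expectation from above and provides no lower bound on the conditional expectation defining $X_t$. Concretely, take $c\equiv 0$ (so the post-retirement construction gives $X_\tau=0$); then \eqref{eq:static2} holds trivially, yet your candidate satisfies ${\cal H}_tX_t=-\mathbb{E}\bigl[\int_t^\tau{\cal H}_s\bar{\epsilon}(\zeta_s)ds\mid{\cal F}_t\bigr]<0$ on $\{t<\tau\}$. (Relatedly, $N_0$ equals $\mathbb{E}[\int_0^\tau{\cal H}_s(c_s-\bar{\epsilon}(\zeta_s))ds+{\bf 1}_{\{\tau<\infty\}}{\cal H}_\tau X_\tau]$, which may be strictly smaller than $x$, so ``$N_0=x$'' is not automatic either.) What the cited results actually do, and what your proof needs, is to set ${\cal H}_tX_t$ equal to the Snell envelope $\operatorname{ess\,sup}_{\xi}\mathbb{E}\bigl[\int_t^\xi{\cal H}_s(c_s-\bar{\epsilon}(\zeta_s))ds+{\bf 1}_{\{\xi=\tau<\infty\}}{\cal H}_\tau X_\tau\mid{\cal F}_t\bigr]$, the supremum running over stopping times $\xi$ with $t\le\xi\le\tau$. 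Then $X_t\ge 0$ is automatic (choose $\xi=t$), \eqref{eq:static2} enters only to guarantee the initial bound $X_0\le x$, and the portfolio comes from the martingale representation of the martingale part in the Doob--Meyer decomposition, with the nondecreasing compensator (which charges only the set where the constraint binds) treated as in Lemma 2 of \citet{Lee-et-al-2019} and Proposition 2.2 of \citet{KarouiJeanblanc1998}. With the plain conditional-expectation construction the admissibility check fails; the stated ``Moreover'' identity should likewise be read through this envelope, collapsing to your formula exactly when the supremum is attained at $\xi=\tau$, which is the situation exploited later in Theorem \ref{thm:main}.
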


\subsection{Dual Value Function}

In this subsection we study the dual optimization problem. We first explain the agent's optimization problem after retirement. Assume  $t\ge \tau$, where $t$ is a fixed constant denoting the current time. We define the agent's optimization problem after retirement $t\ge \tau$:
\begin{equation}
V_R(X_t):= \sup_{(c,\pi)\in \mathcal{A}_R(X_t)} \mathbb{E}_t\left[\int_t^\infty e^{-\d(s-t)}u(c_s)ds\right], 
\end{equation}
where ${\cal A}_R(X_t)$ is the set of all admissible consumption and portfolio strategies for given $X_t >0$ satisfying the conditions: (i) for any $T>0$, $\int_t^T c_s ds <\infty$ a.s. and $\int_t^T \pi_s^2 ds <\infty$ a.s., (ii) for $s\ge t$, $X_s \ge 0$ with $dX_s =[rX_s+ (\m-r)\pi_s-c_s]ds+\s \pi_s dB_s$.

By the well-known results in Section 3.9 in \citet{KS}, the following duality relationship holds:
\begin{equation}
V_R(X_t) = \inf_{{\cal Y}_t>0}\left(J_R({\cal Y}_t)+{\cal Y}_t X_t\right),
\end{equation}
where ${\cal Y}_t =ye^{\d t}{\cal H}_t$ and the dual value function $J_R(y)$  after retirement is defined by 
\begin{equation}
J_R(y) := \mathbb{E}\left[\int_0^\infty e^{-\d t}\tilde{u}({\cal Y}_t)dt\right]\;\;\;\mbox{with}\;\;\;\tilde{u}(y) =\sup_{c\ge 0} \left(u(c)-yc\right].
\end{equation}

In particular, 
\begin{equation}
	V_R(X_t) =\mathbb{E}\left[\int_t^\infty e^{-\d(s-t)}u(c_s^R)ds\right],
\end{equation}
where  $c_s^R  = I({\cal Y}_s^R)$, ${\cal Y}_s^R:= {\cal Y}_t^R e^{\d(s-t)}{\cal H}_s/{\cal H}_t$ and ${\cal Y}_t^R$ is a unique solution of 
\begin{equation}
	X_t= - J_R({\cal Y}_t).
\end{equation}
The explicit form of $J_R(y)$ is given by 
\begin{footnotesize}
	\begin{equation*}
		J_R(y)= \dfrac{2}{\t^2(n_1-n_2)}\left[y^{n_2}\int_0^y \eta^{-n_2-1}\tilde{u}(\eta)d\eta +y^{n_1}\int_y^\infty \eta^{-n_1-1}\tilde{u}(\eta)d\eta\right].
	\end{equation*}
\end{footnotesize}
In Appendix \ref{sec:after-dual}, we provide useful properties of $J_R(y)$.

We formulate the Lagrangian $\mathfrak{L}$ of Problem \ref{pr:main} using constraint \eqref{eq:static1}: 
\begin{align}
\label{eq:Lagrangian1}
\mathfrak{L}\equiv&\sup_{\{c_t, \zeta_t\}}\left\{\mathbb{E}\left[\int_0^{\tau}e^{-\d t}\left(u(\k_1 c_t){\bf 1}_{\{\zeta_t=\mathfrak{B}_1 \}}+u(\k_2 c_t){\bf 1}_{\{\zeta_t=\mathfrak{B}_2 \}}\right)dt + {{\bf 1}_{\{\tau<\infty \}}}\int_{\tau}^\infty e^{-\d t}u(c_t)dt \right]\right.\\
+&\left. y\left(x-\mathbb{E}\left[\int_0^\tau {\cal H}_t{\cal D}_t \left(c_t-(\e_1{\bf 1}_{\{\zeta_t=\mathfrak{B}_1 \}} +\e_2{\bf 1}_{\{\zeta_t=\mathfrak{B}_2 \}})\right)dt+{{\bf 1}_{\{\tau<\infty \}}}{\cal H}_\tau{X}_{\tau} {\cal D}_{\tau-} \right]\right)\right\} \nonumber\\
\le &  \sup_{\{c_t, \zeta_t\}}\left\{\mathbb{E}\left[\int_0^{\tau}e^{-\d t}\left((u(\k_1 c_t)-c_t{\cal Y}_t{\cal D}_t+\e_1{\cal Y}_t{\cal D}_t){\bf 1}_{\{\zeta_t=\mathfrak{B}_1 \}}+(u(\k_2 c_t)-c_t{\cal Y}_t{\cal D}_t+\e_2{\cal Y}_t{\cal D}_t){\bf 1}_{\{\zeta_t=\mathfrak{B}_2 \}}\right)dt\right.\right.\nonumber\\ +&\left.\left. {{\bf 1}_{\{\tau<\infty \}}} e^{-\d \tau}\left(V_R(X_{\tau})-{X}_{\tau}{\cal Y}_{\tau}{\cal D}_{\tau-}\right) \right]+yx\right\}\nonumber\\
\le &\sup_{\{\zeta_t\}}\left\{ \mathbb{E}\left[\int_0^{\tau}e^{-\d t}\left\{\left(\tilde{u}\left(\frac{{\cal Y}_t{\cal D}_t}{\k_1}\right)+\e_1{\cal Y}_t{\cal D}_t\right){\bf 1}_{\{\zeta_t=\mathfrak{B}_1 \}}+\left(\tilde{u}\left(\frac{{\cal Y}_t{\cal D}_t}{\k_2}\right)+\e_2{\cal Y}_t{\cal D}_t\right){\bf 1}_{\{\zeta_t=\mathfrak{B}_2 \}}\right\}dt\right.\right.\nonumber\\+&\left.\left.{{\bf 1}_{\{\tau<\infty \}}}e^{-\d \tau}J_R({\cal Y}_{\tau}{\cal D}_{\tau-}) \right]+yx\right\}, \nonumber
\end{align}
where $y>0$ is the Lagrangian multiplier associated with constraint \eqref{eq:static1} {and ${\cal D}_t\in {\cal NI}$}.

Since $\tilde{u}(y)=\sup_{c\ge 0}(u(c)-yc)=u(I(y))-yI(y)$, we deduce that the candidate of optimal consumption $\hat{c}({\cal Y}_t{\cal D}_t)$ for $y>0$ is given by 
	\begin{equation}
	\hat{c}({\cal Z}_t^{{\cal D}}) :=
	\begin{cases}
	\dfrac{1}{\k_1}I\left(\dfrac{{\cal Z}_t^{{\cal D}}}{\k_1}\right)\;\;\;&\mbox{for}\;\;\zeta_t = \mathfrak{B}_1, \vspace{5mm}\\
	\dfrac{1}{\k_2}I\left(\dfrac{{\cal Z}_t^{{\cal D}}}{\k_2}\right)\;\;\;&\mbox{for}\;\;\zeta_t = \mathfrak{B}_2,
	\end{cases}
	\end{equation}
where ${\cal Z}_t^{{\cal D}}:={\cal Y}_t {\cal D}_t$. 

\begin{rem}
	Note that the dynamics of ${\cal Z}_t^{\cal D}$ is given by
	\begin{equation}\label{eq:dynamics-Z}
		\dfrac{d{\cal Z}_t^{\cal D}}{{\cal Z}_t^{\cal D}} =(\d-r)dt -\t dW_t +\dfrac{d{\cal D}_t}{{\cal D}_t}\;\;\mbox{with}\;\;{\cal Z}_0^{\cal D}=y. 
	\end{equation}
\end{rem}

Let us define $f(y)$ by 
$$f(y):= \frac{1}{y}\left(\tilde{u}\left(\frac{y}{\k_1}\right)-\tilde{u}\left(\frac{y}{\k_2}\right)+y (\e_1-\e_2)\right).$$

{Quantity $\tilde{u}\left(\frac{y}{\k_1}\right)-\tilde{u}\left(\frac{y}{\k_2}\right)+y (\e_1-\e_2)$ in the definition of $f(y)$ can be regarded as the difference between the utility value from job $\mathfrak{B}_1$ and that from $\mathfrak{B}_2$ for a given $y$. It compares utility values of consumption and those of income, for the latter of which we use the Lagrange multiplier $y,$  the marginal value of wealth, to transform the monetary value to the utility value.} 

\begin{lem}\label{lem:z_S}
$f(y)$ is strictly increasing in $y>0$ and there exists a unique $z_S>0$ such that $$f(z_S)=0.$$
\end{lem}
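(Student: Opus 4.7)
The plan is to prove strict monotonicity of $f$ by computing its derivative and showing it is positive, and then to use limits at $0^+$ and $\infty$ together with the intermediate value theorem to obtain existence and uniqueness of $z_S$.

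For the monotonicity part I would start from the identity $y f(y) = \tilde{u}(y/\kappa_1) - \tilde{u}(y/\kappa_2) + y(\epsilon_1 - \epsilon_2)$, differentiate in $y$ using $\tilde{u}'(z) = -I(z)$ (with the convention $I(z)=0$ for $z \geq u'(0)$, which makes $\tilde{u}$ globally $C^1$), and obtain
\[
f(y) + y f'(y) = -\frac{I(y/\kappa_1)}{\kappa_1} + \frac{I(y/\kappa_2)}{\kappa_2} + (\epsilon_1 - \epsilon_2).
\]
Substituting the explicit formula $\tilde{u}(z) = u(I(z)) - z I(z)$ into $f(y)$ and rearranging, the $I(y/\kappa_i)/\kappa_i$ terms and the $\epsilon_1-\epsilon_2$ term cancel, leaving
\[
y f'(y) = \frac{u(I(y/\kappa_2)) - u(I(y/\kappa_1))}{y}.
\]
Since $\kappa_1 < \kappa_2$, we have $y/\kappa_1 > y/\kappa_2$, hence $I(y/\kappa_1) < I(y/\kappa_2)$ because $I$ is strictly decreasing, and monotonicity of $u$ then makes the numerator strictly positive. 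Thus $f'(y) > 0$ on $(0,\infty)$.

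For the existence and uniqueness of the zero, I would apply the mean value theorem to $\tilde{u}$ on the interval $[y/\kappa_2, y/\kappa_1]$ to obtain some $\xi(y)$ in that interval with
\[
f(y) = -I(\xi(y))\left(\frac{1}{\kappa_1} - \frac{1}{\kappa_2}\right) + (\epsilon_1 - \epsilon_2).
\]
Because $1/\kappa_1 - 1/\kappa_2 > 0$, and since $\xi(y) \to 0^+$ as $y \to 0^+$ while $I(z) \to \infty$ as $z \to 0^+$, we get $f(y) \to -\infty$. Similarly, $\xi(y) \to \infty$ as $y \to \infty$ with $I(z) \to 0$, so $f(y) \to \epsilon_1 - \epsilon_2 > 0$. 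Continuity of $f$ and the intermediate value theorem then produce a root, and strict monotonicity forces it to be unique.

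The only mildly delicate point in this plan is the cancellation leading to the clean formula for $yf'(y)$; this requires careful bookkeeping of both $\kappa_1$ and $\kappa_2$ contributions and an application of the conjugate identity. Once that step is secured, the limit computations and the uniqueness conclusion are essentially automatic, so I do not anticipate any serious obstacle.
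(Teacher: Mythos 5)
Your proposal is correct and follows essentially the same route as the paper: you establish $f'(y)>0$ by differentiating and using $\tilde{u}'=-I$ together with the conjugate identity $\tilde{u}(z)=u(I(z))-zI(z)$ (your formula $yf'(y)=\bigl(u(I(y/\k_2))-u(I(y/\k_1))\bigr)/y$ is the paper's expression $f'(y)=-y^{-2}\bigl(u(I(y/\k_1))-u(I(y/\k_2))\bigr)$), and then, exactly as in the paper, you apply the mean value theorem to $\tilde u$ on $[y/\k_2,\,y/\k_1]$ to get $f(y)=-I(\xi(y))(1/\k_1-1/\k_2)+(\e_1-\e_2)$, deduce the limits $-\infty$ at $0^+$ and $\e_1-\e_2>0$ at $\infty$, and conclude by the intermediate value theorem and monotonicity.
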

\begin{proof}
	Since 
		\begin{align*}
		f'(y) =& -\dfrac{1}{y^2}\left(\tilde{u}\left(\dfrac{y}{\k_1}\right)-\tilde{u}\left(\dfrac{y}{\k_2}\right)\right)+\dfrac{1}{y}\left(-\dfrac{1}{\k_1}I\left(\dfrac{y}{\k_1}\right)+\dfrac{1}{\k_2}I\left(\dfrac{y}{\k_2}\right)\right)\\
		=&-\dfrac{1}{y^2}\left(u\left(\dfrac{y}{\k_1}\right)-u\left(\dfrac{y}{\k_2}\right)\right)>0,
		\end{align*}
$f(y)$ is strictly increasing in $y>0$. 

Moreover, the mean value theorem implies that there exits $\hat{y}\in(y/\k_2, y/\k_1)$ such that 
\begin{equation}
\tilde{u}\left(\frac{y}{\k_1}\right)-\tilde{u}\left(\frac{y}{\k_2}\right)=-I(\hat{y})\left(\dfrac{y}{\k_1}-\dfrac{y}{\k_2}\right).
\end{equation}

Since $\lim_{y\to 0}I(y)=\infty$, $\lim_{y\to\infty}I(y)=0$, and $I(\frac{y}{\k_1})<I(\hat{y})<I(\frac{y}{\k_2})$, 
we deduce that 
\begin{equation}
\lim_{y\to \infty} f(y) = (\e_1-\e_2)>0\;\;\;\mbox{and}\;\;\;\lim_{y\to 0}f(y)=-\infty.
\end{equation}

Thus, the intermediate value theorem implies that there exists a unique $z_S>0$ such that 
$$
f(z_S)=0. 
$$

\end{proof}

For any job process $\zeta_t$, Lemma \ref{lem:z_S} implies that 
\begin{align*}
&\left(\tilde{u}\left(\frac{{\cal Z}_t^{{\cal D}}}{\k_1}\right)+\e_1{\cal Z}_t^{{\cal D}}\right){\bf 1}_{\{\zeta_t=\mathfrak{B}_1 \}}+\left(\tilde{u}\left(\frac{{\cal Z}_t^{{\cal D}}}{\k_2}\right)+\e_2{\cal Z}_t^{{\cal D}}\right){\bf 1}_{\{\zeta_t=\mathfrak{B}_2 \}}\\
\le &\left(\tilde{u}\left(\frac{{\cal Z}_t^{{\cal D}}}{\k_1}\right)+\e_1{\cal Z}_t^{{\cal D}}\right){\bf 1}_{\{{\cal Z}_t^{{\cal D}} \ge z_S \}}+\left(\tilde{u}\left(\frac{{\cal Z}_t^{{\cal D}}}{\k_2}\right)+\e_2{\cal Z}_t^{{\cal D}}\right){\bf 1}_{\{{\cal Z}_t^{{\cal D}} < z_S \}}.
\end{align*}
Hence,  the candidate of optimal job state process $\hat{\zeta}({\cal Z}_t^{{\cal D}})$ is given by 
	\begin{equation}
	\hat{\zeta}({\cal Z}_t^{{\cal D}}) :=
	\begin{cases}
	\mathfrak{B}_1\;\;\;&\mbox{for}\;\;{\cal Z}_t^{{\cal D}}\ge z_S, \vspace{2mm}\\
	\mathfrak{B}_2\;\;\;&\mbox{for}\;\;{\cal Z}_t^{{\cal D}}< z_S.
	\end{cases}
	\end{equation}
{That is, job $\mathfrak{B}_j$ is chosen if  the utility value from it is greater than  that from job $\mathfrak{B}_i$ for $i\ne j$.}

It follows from \eqref{eq:Lagrangian1} that
\begin{align}
\label{eq:Lagrangian2}
\mathfrak{L}\le& \mathbb{E}\left[\int_0^\tau e^{-\d t}\left\{\left(\tilde{u}\left(\frac{{\cal Z}_t^{{\cal D}}}{\k_1} \right)+\e_1 {\cal Z}_t^{{\cal D}}\right){\bf 1}_{\{{\cal Z}_t^{{\cal D}}>z_S\}}+\left(\tilde{u}\left(\frac{{\cal Z}_t^{{\cal D}}}{\k_2} \right)+\e_2 {\cal Z}_t^{{\cal D}}\right){\bf 1}_{\{{\cal Z}_t^{{\cal D}}\le z_S\}}\right\}dt\right.\\+&\left.{{\bf 1}_{\{\tau<\infty \}}}e^{-\d \tau}J_R({\cal Z}_{\tau-}^{{\cal D}})\right] +yx.\nonumber
\end{align}

Minimizing over $y>0$ and ${\cal D}_t$ in \eqref{eq:Lagrangian2} yields 
	\begin{align*}
	\mathfrak{L}&\le \inf_{\{y>0,\;{\cal D}_t\in{\cal NI}\}}\left\{{\cal J}_0(y;{\cal D},\tau) +yx\right\},
	\end{align*}
where
\begin{align*}
{\cal J}_0(y;{\cal D},\tau):=&\mathbb{E}\left[\int_0^\tau e^{-\d t}\left\{\left(\tilde{u}\left(\frac{{\cal Z}_t^{\cal D}}{\k_1} \right)+\e_1 {\cal Z}_t^{\cal D}\right){\bf 1}_{\{{\cal Z}_t^{\cal D}>z_S\}}+\left(\tilde{u}\left(\frac{{\cal Z}_t^{\cal D}}{\k_2} \right)+\e_2 {\cal Z}_t^{\cal D}\right){\bf 1}_{\{{\cal Z}_t^{\cal D}\le z_S\}}\right\}dt\right.\\+&\left.{{\bf 1}_{\{\tau<\infty \}}}e^{-\d \tau}J_R({\cal Z}_{\tau-}^{\cal D})\right].
\end{align*}

Therefore, we deduce that 
\begin{align}
\label{eq:weak-duality}
V(x)&\le \sup_{\tau\in{\cal  S}}\inf_{\{y>0,\;{\cal D}_t\in{\cal NI}\}}\left\{{\cal J}_0(y;{\cal D},\tau) +yx\right\}\\& \le \inf_{y>0}\left(\sup_{\tau\in{\cal S}}\inf_{{\cal D}_t\in{\cal NI}}{\cal J}_0(y;{\cal D},\tau)+yx\right)\nonumber\\&\le \inf_{y>0}\left(\inf_{{\cal D}_t\in{\cal NI}}\sup_{\tau\in{\cal S}}{\cal J}_0(y;{\cal D},\tau)+yx\right).\nonumber
\end{align}

\subsection{A Two-person Zero-sum Game}

 We will now consider  a two-person zero-sum game arising from \eqref{eq:weak-duality}. The game  involves a controller, who is a minimizer and choooses a process ${\cal D}\in{\cal NI}$, and a stopper, who is maximizer and chooses a stopping time $\tau\in{\cal S}$. The two agents share the same performance criterion, which is given by 
\begin{align}
\label{eq:game}
{\cal J}_0(z;{\cal D},\tau)=&\mathbb{E}\left[\int_0^\tau e^{-\d t}\left\{\left(\tilde{u}\left(\frac{{\cal Z}_t^{\cal D}}{\k_1} \right)+\e_1 {\cal Z}_t^{\cal D}\right){\bf 1}_{\{{\cal Z}_t^{\cal D}>z_S\}}+\left(\tilde{u}\left(\frac{{\cal Z}_t^{\cal D}}{\k_2} \right)+\e_2 {\cal Z}_t^{\cal D}\right){\bf 1}_{\{{\cal Z}_t^{\cal D}\le z_S\}}\right\}dt\right.\\+&\left.{{\bf 1}_{\{\tau<\infty \}}}e^{-\d \tau}J_R({\cal Z}_{\tau-}^{\cal D})\right],\nonumber
\end{align}
with ${\cal Z}_0^{\cal D}= z$ (we recall ${\cal Z}_t^{\cal D}={\cal Y}_t{\cal D}_t$).

Next define the lower value $\underline{J}$ and the upper value $\bar{J}$ as 
$$
\underline{J}(z):=\sup_{\tau\in{\cal S}}\inf_{{\cal D}\in{\cal NI}}{\cal J}_0(z;{\cal D},\tau)
$$
and 
$$
\bar{J}(z):=\inf_{{\cal D}\in{\cal NI}}\sup_{\tau\in{\cal S}}{\cal J}_0(z;{\cal D},\tau).
$$

If $\underline{J}(z)=\bar{J}(z)$, then the game is said to have a value and we denote the common value $\underline{J}(z)=\bar{J}(z)$ by $J(z)$. 

 A {\it Nash equilibrium}, or equivalently a {\it saddle point} $(\widehat{\cal D},\hat{\tau})\in{\cal NI}\times {\cal S}$, such that $\widehat{\cal D}$ is the {\it best response} given ${\hat{\tau}}$ while  $\hat{\tau}$ is simultaneously the \textit{best response} given $\widehat{\cal D}$, i.e., 
\begin{equation}
{\cal J}_0 (z; \widehat{\cal D}, \tau) \le {\cal J}_0 (z;\widehat{\cal D}, \hat{\tau}) \le {\cal J}_0 (z;{\cal D}, \hat{\tau}) 
\end{equation}
for any ${\cal D}\in {\cal NI}$, ${\tau}\in{\cal S}$. It is clear that if there exists a saddle point for the game, then the game has a value, i.e., 
\begin{equation}
J(z) = {\cal J}_0 (z,\widehat{\cal D}, \hat{\tau}).
\end{equation}

We can now define the dual problem as  finding a saddle point of the game described above.
\begin{pr}[Game]\label{pr:dual}	
	Find a {\it Nash equilibrium} $(\widehat{\cal D}, \hat{\tau})\in{\cal NI}\times {\cal S}$ such that 
	\begin{equation}\label{eq:NE}
	{\cal J}_0 (z; \widehat{\cal D}, \tau) \le {\cal J}_0 (z;\widehat{\cal D}, \hat{\tau}) \le {\cal J}_0 (z;{\cal D}, \hat{\tau}) \;\;\;\mbox{for any}\;\;{\cal D}\in{\cal NI},\;\;\tau\in{\cal S}.
	\end{equation}
	If a {\it Nash equilibrium}  exists, the dual value function $J(y)$ is defined as 
	\begin{equation}\label{eq:exchange}
	J(z) = \sup_{\tau\in{\cal S}}\inf_{{\cal D}\in{\cal NI}}{\cal J}_0(z;{\cal D},\tau)=\inf_{{\cal D}\in{\cal NI}}\sup_{\tau\in{\cal S}}{\cal J}_0(z;{\cal D},\tau)={\cal J}_0 (z;\widehat{\cal D}, \hat{\tau}) 
	\end{equation}
	with ${\cal Y}_{0}=z$.
\end{pr}

\begin{rem}	
	We can get the following weak duality
		\begin{align}
		\label{eq:weak-duality2}
		V(x)=&\sup_{(c,\pi, \d, \tau)\in \mathcal{A}(x)}\mathbb{E}\left[\int_0^{\tau}e^{-\d t}\left(u(\k_1 c_t){\bf 1}_{\{\zeta_t=\mathfrak{B}_1 \}}+u(\k_2 c_t){\bf 1}_{\{\zeta_t=\mathfrak{B}_2 \}}\right)dt +{{\bf 1}_{\{\tau<\infty \}}} \int_{\tau}^\infty e^{-\d t}u(c_t)dt \right]\\\le &\inf_{y>0} \left(J(y)+yx\right).\nonumber
		\end{align}
	We will show in Theorem \ref{thm:main} that the maximized value is indeed equal to the right-hand side of the last inequality in \eqref{eq:weak-duality2} with infimum being replaced by minimum, i.e.,
	\begin{equation}
	V(x) = \min_{y>0} \left(J(y)+yx\right).
	\end{equation}
\end{rem}

	\begin{rem}
		\citet{FP2007} or \citet{LS2011}\footnote{See the equation (50) in extended appendix of \citet{FP2007}) or the equation (A10) in \citet{LS2011}.} state that the $\sup_{\tau \in{\cal S}}$ and $\inf_{{\cal D}\in{\cal NI}}$ in \eqref{eq:exchange} can be interchanged. However, they do not provide a theoretical proof of the fact. We provide a rigorous proof  by considering the two person zero-sum game in Problem \ref{pr:dual}.
	\end{rem}

\section{Optimal Strategies}\label{sec:optimal strategies}
In this section we derive optimal strategies for the main optimization problem (Problem \ref{pr:main}) by obtaining the value of the two-person zero-sum game in Problem \ref{pr:dual}.

\subsection{Heuristic derivation of Hamilton-Jacobi-Bellman quasi-variational inequality(HJBQV) for the dual value function $J$}

{In this subsection, we derive HJBQV for $J(z),$ the value function of the game, by relying on heuristic and intuitive arguments. We will derive a concrete solution to the HJBQV and provide the verification that the solution is equal to the value function of the game in later subsections. }

If a {\it Nash-equilibrium} $(\widehat{\cal D}, \hat{\tau})\in{\cal NI}\times {\cal S}$ of the game \eqref{eq:game} exists, then the lower value $\underline{J}$ and the upper value $\bar{J}$ can be written as 
\begin{align*}
\underline{J}(z)= \sup_{\tau\in{\cal S}}&\mathbb{E}\left[\int_0^\tau e^{-\d t}\left\{\left(\tilde{u}\left(\frac{{\cal Z}_t^{\widehat{\cal D}}}{\k_1} \right)+\e_1 {\cal Z}_t^{\widehat{\cal D}}\right){\bf 1}_{\{{\cal Z}_t^{\widehat{\cal D}}>z_S\}}+\left(\tilde{u}\left(\frac{{\cal Z}_t^{\widehat{\cal D}}}{\k_2} \right)+\e_2 {\cal Z}_t^{\widehat{\cal D}}\right){\bf 1}_{\{{\cal Z}_t^{\widehat{\cal D}}\le z_S\}}\right\}dt\right.\\+&\left.{{\bf 1}_{\{\tau<\infty \}}}e^{-\d \tau}J_R({\cal Z}_{\tau-}^{\widehat{\cal D}})\right]
\end{align*}
and
\begin{align*}
\bar{J}(z)= \inf_{{\cal D}\in{\cal NI}}&\mathbb{E}\left[\int_0^{\hat{\tau}} e^{-\d t}\left\{\left(\tilde{u}\left(\frac{{\cal Z}_t^{\cal D}}{\k_1} \right)+\e_1 {\cal Z}_t^{\cal D}\right){\bf 1}_{\{{\cal Z}_t^{\cal D}>z_S\}}+\left(\tilde{u}\left(\frac{{\cal Z}_t^{\cal D}}{\k_2} \right)+\e_2 {\cal Z}_t^{\cal D}\right){\bf 1}_{\{{\cal Z}_t^{\cal D}\le z_S\}}\right\}dt \right.\\+&\left.{{\bf 1}_{\{\hat{\tau}<\infty \}}}e^{-\d \tau}J_R({\cal Z}_{\hat{\tau}-}^{\cal D})\right].
\end{align*}

Considering dynamic programming principle, we expect that $\underline{J}$ satisfies the following HJB equation in the region $\{z>0 \mid \underline{J}'(z)<0 \}$:
	\begin{equation}\label{eq:VI1}
	\max\left\{{\cal L}{\cal Q}(z) +\left(\tilde{u}\left(\frac{z}{\k_1} \right)+\e_1 z\right){\bf 1}_{\{z>z_S\}}+\left(\tilde{u}\left(\frac{z}{\k_2} \right)+\e_2 z\right){\bf 1}_{\{z\le z_S\}} , J_R(z)-{\cal Q}(z)\right\}=0,
	\end{equation}
where the differential operator ${\cal L}$ is given by 
\begin{equation}
{\cal L}=\dfrac{\t^2}{2}z^2\dfrac{d^2}{dz^2} + (\d-r)z\dfrac{d}{dz}-\d. 
\end{equation}

In the region $\{z>0 \mid \bar{J}(z)>J_R(z) \}$, we also expect that $\bar{J}$ satisfies the following HJB equation with a gradient constraint 
	\begin{equation}\label{eq:VI2}
	\min\left\{{\cal L}{\cal Q}(z) +\left(\tilde{u}\left(\frac{z}{\k_1} \right)+\e_1 z\right){\bf 1}_{\{z>z_S\}}+\left(\tilde{u}\left(\frac{z}{\k_2} \right)+\e_2 z\right){\bf 1}_{\{z\le z_S\}} , -{\cal Q}'(z)\right\}=0.
	\end{equation}
Since $J(z)=\bar{J}(z)=\underline{J}(z)$ when a {\it Nash-equilibrium} exists, in view of \eqref{eq:VI1} and \eqref{eq:VI2}, we expect that the dual value function $J(z)$ is  a solution ${\cal Q}(z)$ to the following {max-min} type of Hamilton-Jacobi-Bellman quasi-variational inequality (HJBQV) arising from Problem \ref{pr:dual}: for $z>0$ 
	\begin{equation}\label{eq:HJBQV}
	\max\left\{\min\left\{{\cal L}{\cal Q}(z)+\left(\tilde{u}\left(\frac{z}{\k_1} \right)+\e_1 z\right){\bf 1}_{\{z>z_S\}}+\left(\tilde{u}\left(\frac{z}{\k_2} \right)+\e_2 z\right){\bf 1}_{\{z\le z_S\}} , -{\cal Q}'(z) \right\}, J_R(z)-{\cal Q}(z) \right\}=0.
	\end{equation}

\subsection{Solution to HJBQV \eqref{eq:HJBQV}}

To find a solution to  HJBQV \eqref{eq:HJBQV}, we will employ a {\it guess-and-verify} approach. We will provide formal results in the next section.

From the perspective of the stopper in \eqref{eq:HJBQV}, the state space $\mathbb{R}_+$ splits into  two regions:
\begin{align*}
{\bf RR}&:=\{z>0\mid {\cal Q}(z)=J_R(z)   \}\;\;\;(\mbox{retirement region}),\\
{\bf WR}&:=\{z>0\mid {\cal Q}(z)>J_R(z)   \}\;\;\;(\mbox{working region}).
\end{align*}

Similarly, from the perspective of the controller, the state space $\mathbb{R}_+$  can be decomposed into two regions:
\begin{align*}
{\bf IR}&:=\{z>0\mid \;{\cal Q}'(z)<0 \}\;\;\;(\mbox{in-action region}),\\
{\bf AR}&:=\{z>0\mid \;{\cal Q}'(z)=0 \}\;\;\;(\mbox{adjustment region}).
\end{align*}

In the retirement region {\bf RR}, ${\cal Q}(z)=J_R(z)$ satisfies 
$$
\min\left\{{\cal L}J_R(z)+\left(\tilde{u}\left(\frac{z}{\k_1} \right)+\e_1 z\right){\bf 1}_{\{z>z_S\}}+\left(\tilde{u}\left(\frac{z}{\k_2} \right)+\e_2 z\right){\bf 1}_{\{z\le z_S\}} , -\dfrac{dJ_R}{dz}(z) \right\}\le 0. 
$$ 
It follows from \eqref{eq:J0-prime} that 
$$
J_R'(y)=\dfrac{2}{\t^2(n_1-n_2)}\left[y^{n_2-1}\int_0^y \eta^{-n_2}I(\eta)d\eta +y^{n_1-1}\int_y^\infty \eta^{-n_1}I(\eta)d\eta\right]<0.
$$

Thus, we have 
$$
{\cal L}J_R(z)+\left(\tilde{u}\left(\frac{z}{\k_1} \right)+\e_1 z\right){\bf 1}_{\{z>z_S\}}+\left(\tilde{u}\left(\frac{z}{\k_2} \right)+\e_2 z\right){\bf 1}_{\{z\le z_S\}}\le 0\;\;\mbox{for}\;\;z\in{\bf RR}.
$$

Since ${\cal L}J_R(z) + \tilde{u}(z)=0$, we deduce that 
$$
h(z) \le 0 \;\;\mbox{for}\;\;z\in {\bf RR},
$$
where 
$$
h(z):= \left(\tilde{u}\left(\frac{z}{\k_1} \right)-\tilde{u}(z)+\e_1 z\right){\bf 1}_{\{z>z_S\}}+\left(\tilde{u}\left(\frac{z}{\k_2} \right)-\tilde{u}(z)+\e_2 z\right){\bf 1}_{\{z\le z_S\}}.
$$

{Function $h(z)$ gives the difference in the utility value after retirement and that before retirement for a given $z$. Note that $z$ is used to transform the monetary value of income $\e_i \ (i=1,2)$ to the utility value.}

\begin{lem}~\label{lem:h} 
	\begin{itemize}
		\item[(a)] ${h(z)}/{z}$ is strictly increasing in $z>0$. 
		\item[(b)] There exists a unique $\hat{z}>0$ such that $ h(\hat{z})=0.$
		\item[(c)] $h(z)<0$ for $z\in(0,\hat{z})$, and $h(z)>0$ for $z\in(\hat{z},\infty)$.
	\end{itemize}
\end{lem}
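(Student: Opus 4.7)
The plan is to exploit the representation $\tilde u(z/\k_i)-\tilde u(z)=-\int_z^{z/\k_i}I(s)\,ds$ (valid because $\tilde u'=-I$ and $\k_i<1$) and analyze $h(z)/z$ piecewise on $(0,z_S]$ and $(z_S,\infty)$, then glue the pieces together using continuity of $h$ at $z_S$.

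\textbf{Continuity of $h$ at $z_S$.} Writing $g_1(z)=\tilde u(z/\k_1)-\tilde u(z)+\e_1 z$ and $g_2(z)=\tilde u(z/\k_2)-\tilde u(z)+\e_2 z$, the difference $g_1(z_S)-g_2(z_S)=\tilde u(z_S/\k_1)-\tilde u(z_S/\k_2)+(\e_1-\e_2)z_S$ equals $z_S f(z_S)=0$ by the definition of $z_S$ in Lemma~\ref{lem:z_S}. Hence the two branches agree at $z_S$ and $h$ is continuous on $(0,\infty)$.

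\textbf{Part (a).} On each branch $i\in\{1,2\}$, I would write
\[
\frac{g_i(z)}{z}=\e_i-\frac{1}{z}\int_z^{z/\k_i}I(s)\,ds,
\]
substitute $s=zt$ to get $\frac{1}{z}\int_z^{z/\k_i}I(s)\,ds=\int_1^{1/\k_i}I(zt)\,dt$, and differentiate under the integral:
\[
\frac{d}{dz}\!\left(\frac{g_i(z)}{z}\right)=-\int_1^{1/\k_i}t\,I'(zt)\,dt>0,
\]
since $I$ is strictly decreasing by Assumption~\ref{as:utility1}, so $I'(zt)<0$. Thus $h(z)/z$ is strictly increasing on each of $(0,z_S]$ and $[z_S,\infty)$. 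Combined with the continuity just established, it is strictly increasing on all of $(0,\infty)$: for $z_1<z_S<z_2$,
\[
\frac{h(z_1)}{z_1}<\frac{g_2(z_S)}{z_S}=\frac{g_1(z_S)}{z_S}<\frac{h(z_2)}{z_2}.
\]

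\textbf{Parts (b) and (c).} It suffices to show $\lim_{z\to 0^+}h(z)/z=-\infty$ and $\lim_{z\to\infty}h(z)/z>0$, and then apply the intermediate value theorem together with the strict monotonicity from (a). For $z\to 0^+$, $z<z_S$, I use the mean value theorem to write $\tilde u(z/\k_2)-\tilde u(z)=-I(\hat s)\,z(1/\k_2-1)$ for some $\hat s\in(z,z/\k_2)$; since $\lim_{y\to 0^+}I(y)=\infty$ and $\hat s\to 0^+$, I obtain $h(z)/z\to-\infty$. For $z\to\infty$, $z>z_S$; using $I(s)\to 0$ as $s\to\infty$ (hence $\tilde u(z/\k_1)-\tilde u(z)=-\int_z^{z/\k_1}I(s)\,ds=o(z)$), one has $h(z)/z\to \e_1>0$. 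The IVT then yields a unique $\hat z>0$ with $h(\hat z)=0$, and strict monotonicity of $h(z)/z$ gives the sign statement in (c).

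The only slightly delicate point is the interior continuity at $z_S$; all the rest is standard differentiation of a one-dimensional integral. I do not anticipate a serious obstacle beyond making the behavior of $I$ near $0$ and near $\infty$ explicit via Assumption~\ref{as:utility1}.
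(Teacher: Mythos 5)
Your proposal is correct and follows essentially the same route the paper intends: the paper omits the proof of this lemma, stating it is ``almost identical'' to that of Lemma~\ref{lem:z_S}, i.e.\ monotonicity of the ratio, the mean value theorem for the limits at $0$ and $\infty$, and the intermediate value theorem; your explicit observation that $f(z_S)=0$ makes the two branches of $h$ agree at $z_S$ is a worthwhile addition, since the piecewise definition is exactly what distinguishes $h$ from $f$. One small caveat: your derivative formula $\frac{d}{dz}\left(\frac{g_i(z)}{z}\right)=-\int_1^{1/\k_i}t\,I'(zt)\,dt$ uses $I'$, but Assumption~\ref{as:utility1} only makes $u$ continuously differentiable, so $I=(u')^{-1}$ is continuous and strictly decreasing but need not be differentiable. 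This is easily repaired: either note that $z\mapsto\int_1^{1/\k_i}I(zt)\,dt$ is strictly decreasing simply because $I$ is, or differentiate $g_i(z)/z$ directly using $\tilde u'=-I$ as in the paper's proof of Lemma~\ref{lem:z_S}, which gives $\frac{d}{dz}\left(\frac{g_i(z)}{z}\right)=-\frac{1}{z^2}\left(u\left(I\left(\frac{z}{\k_i}\right)\right)-u\left(I(z)\right)\right)>0$ without any smoothness of $I$. With that adjustment the argument is complete and matches the paper's.
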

\begin{proof}
	The proof is  almost identical with that of Lemma \ref{lem:z_S} and we omit its detail.
\end{proof}

\begin{rem}
	Lemma \ref{lem:h} implies that $z\le \hat{z}$ for $z\in{\bf RR}$. 
	Moreover, the inequality ${z}_S>\hat{z}$ holds if and only if  {$h(z_S)>0$}. 
\end{rem}

Suppose that there exist two boundaries $z_R\in(0,\hat{z})$ and $z_B\in(z_R,\infty)$ such that the agent chooses the option to retire  if ${\cal Z}_t \le z_R$ and the agent's wealth is zero if ${\cal Z}_t \ge z_B$. That is, 
\begin{eqnarray}
\begin{split}
\begin{cases}
{\cal Q}(z) = J_R(z)\;\;\;&\mbox{if}\;\;z\le z_R,\\
{\cal Q}'(z) = 0\;\;\;&\mbox{if}\;\;z\ge z_B,
\end{cases}
\end{split}
\end{eqnarray}
and it follows from the smooth-pasting(or super-contact) condition (see \citet{Dumas1989}) that
\begin{equation}\label{eq:smooth-pasting}
{\cal Q}(z_R)=J_R(z_R),\;\;{\cal Q}'(z_R)=J_R'(z_R),\;\; {\cal Q}'(z_B)=0,\;\;\mbox{and}\;\;{\cal Q}''(z_B)=0.
\end{equation}

\begin{rem}\label{rem:saddle}
	We will show later that a {\it Nash-equilibrium} for the game in \eqref{eq:game} is given by a pair of {\it barrier strategies} $({\cal D}^{z_B}, \tau_{z_R})$ for $z_R<z_B$, where we define 
	\begin{equation*}
	{\cal D}_t^{z_B} := \min\left\{1, \inf_{0\le s \le t }\dfrac{z_B}{{\cal Y}_s}\right\}\;\;\mbox{for}\;\;t\ge 0,\;\;\;\tau_{z_R}:=\inf\{t\ge 0 \mid {\cal Z}_t^{{\cal D}^{z_B}} < z_R \}
	\end{equation*}
	That is, 
	\begin{equation}
	{\cal Q}(z) = J(z) = {\cal J}_0(z;{\cal D}^{z_B}, \tau_{z_R}).
	\end{equation}
	
\end{rem}

In the region ${\bf WR}\cap{\bf IR}$, ${\cal Q}(z)$ satisfies 
\begin{equation}\label{eq:ODE_Q}
{\cal L} {\cal Q}(z)+ h(z) + \tilde{u}(z) =0.
\end{equation}

A general solution to the equation \eqref{eq:ODE_Q} can be written as the sum of a general solution
to the homogeneous equation and a particular solution:
	\begin{align*}
		{\cal Q}(z) =& E_1 z^{n_1} + E_2 z^{n_2} +\dfrac{2}{\t^2(n_1-n_2)} {\left[z^{n_2}\int_0^z \eta^{-n_2-1}(h(\eta)+\tilde{u}(\eta))d\eta +z^{n_1}\int_z^\infty \eta^{-n_1-1}(h(\eta)+\tilde{u}(\eta))d\eta\right]}\\
		=& E_1 z^{n_1} + E_2 z^{n_2}+ J_R(z) + \Psi_h(z),
	\end{align*}
where 
$$
\Psi_h(z):=\dfrac{2}{\t^2(n_1-n_2)}\left[z^{n_2}\int_0^z \eta^{-n_2-1}h(\eta)d\eta +z^{n_1}\int_z^\infty \eta^{-n_1-1}h(\eta)d\eta\right].
$$

Since ${\cal Q}(z_R)=J_R(z_R)$ and ${\cal Q}'(z_R)=J_R'(z_R)$, we have 
\begin{align*}
	&E_1 z_R^{n_1} + E_2 z_R^{n_2}+ J_R(z_R) + \Psi_h(z_R)=J_R(z_R),\\
	&n_1 E_1 z_R^{n_1-1} + n_2 E_2 z_R^{n_2-1}+ J_R'(z_R) + \Psi_h'(z_R)=J_R'(z_R),
\end{align*}
which implies that 
\begin{equation}\label{eq:coefficient-1}
E_1 =-\dfrac{2}{\t^2(n_1-n_2)}\int_{z_R}^\infty \eta^{-n_1-1}h(\eta)d\eta\;\;\;\mbox{and}\;\;\;E_2 =-\dfrac{2}{\t^2(n_1-n_2)}\int_0^{z_R} \eta^{-n_2-1}h(\eta)d\eta.
\end{equation}

From \eqref{eq:utilde-integral}, we can easily deduce that 
$$
\int_0^z \eta^{-n_2-1}|h(\eta)|d\eta + \int_z^\infty \eta^{-n_1-1}|h(\eta)|d\eta {<\infty}.
$$
Proposition \ref{pro:review:KMZ} implies that 
$$
\liminf_{y\downarrow 0}z^{-n_2}|h(z)|=\liminf_{z\uparrow \infty}z^{-n_1}|h(z)|=0.
$$

Since 
$$
J_R'(y)=\dfrac{2}{\t^2(n_1-n_2)}\left[y^{n_2-1}\int_0^y \eta^{-n_2}I(\eta)d\eta +y^{n_1-1}\int_y^\infty \eta^{-n_1}I(\eta)d\eta\right],
$$
by  integration by parts 
\begin{align*}
J_R'(z)+\Psi_h'(z)=&\dfrac{2}{\t^2(n_1-n_2)}\left[z^{n_2-1}\int_0^z \eta^{-n_2}(h'(\eta)-I(\eta))d\eta +z^{n_1-1}\int_z^\infty \eta^{-n_1}(h'(\eta)-I(\eta))d\eta\right]\\
=&\dfrac{2}{\t^2(n_1-n_2)}\left[z^{n_2-1}\int_0^z \eta^{-n_2}(\hat{\e}(\eta)-\hat{c}(\eta))d\eta +z^{n_1-1}\int_z^\infty \eta^{-n_1}(\hat{\e}(\eta)-\hat{c}(\eta))d\eta\right],
\end{align*}
where $\hat{\e}(z):=\e_1{\bf 1}_{\{z > z_S\}} +\e_2{\bf 1}_{\{z\le z_S\}}$.

Since ${\cal Q}'(z_B)={\cal Q}''(z_B)=0$, we have 
\begin{align*}
&n_1E_1 z_B^{n_1-1} + n_2E_2 z_B^{n_2-1}+ J_R'(z_R) + \Psi_h'(z_R)=0,\\
&n_1(n_1-1) E_1 z_B^{n_1-2} + n_2(n_2-1) E_2 z_R^{n_2-2}+ J_R''(z_R) + \Psi_h''(z_R)=0,
\end{align*}
which implies that 
\begin{equation}\label{eq:coefficient-2}
E_1 =-\dfrac{2}{n_1\t^2(n_1-n_2)}\int_{z_R}^\infty \eta^{-n_1}(\hat{\e}(\eta)-\hat{c}(\eta))d\eta\;\;\;\mbox{and}\;\;\;E_2 =-\dfrac{2}{n_2\t^2(n_1-n_2)}\int_0^{z_R} \eta^{-n_2}(\hat{\e}(\eta)-\hat{c}(\eta))d\eta.
\end{equation}

From $\eqref{eq:coefficient-1}$ and $\eqref{eq:coefficient-2}$, we deduce that $z_B$ and $z_R$ satisfy the  coupled algebraic equations:
$$
\phi_1(z_R,z_B)=0,\;\;\phi_2(z_R,z_B)=0,
$$
where 
\begin{align*}
\phi_1(z_1,z_2):&=\int_{z_2}^\infty \eta^{-n_1}(\hat{\e}(\eta)-\hat{c}(\eta))d\eta-n_1\int_{z_1}^\infty \eta^{-n_1-1}h(\eta)d\eta,\\
\phi_2(z_1,z_2):&=\int_0^{z_2}\eta^{-n_2}(\hat{\e}(\eta)-\hat{c}(\eta))d\eta-n_2\int_0^{z_1}\eta^{-n_2-1}h(\eta)d\eta.
\end{align*}
In summary, we set ${\cal Q}(z),$ a candidate solution to the HJBQV \eqref{eq:HJBQV}, as follows: 
\begin{eqnarray}
\begin{split}\label{eq:Q-sol-HJBQV}
{\cal Q}(z)=
\begin{cases}
J_R(z)\;\;\;&\mbox{for}\;\;z\le z_R,\\
E_1 z^{n_1} + E_2 z^{n_2} + \phi_h(z)+J_R(z)\;\;\;&\mbox{for}\;\;z_R \le z \le z_B,\\
E_1 z_B^{n_1} + E_2 z_B^{n_2} + \phi_h(z_B)+J_R(z_B)\;\;\;&\mbox{for}\;\;\;z_B \le  z. 
\end{cases}
\end{split}
\end{eqnarray}

From \eqref{eq:coefficient-1} and \eqref{eq:coefficient-2},  we deduce that for $z_R<z<z_B$
	\begin{eqnarray}
	\begin{split}\label{eq:Q-1}
	{\cal Q}(z) =\dfrac{2}{\t^2(n_1-n_2)}\left[z^{n_2}\int_{z_R}^z \eta^{-n_2-1}h(\eta)d\eta +z^{n_1}\int_z^{z_R} \eta^{-n_1-1}h(\eta)d\eta\right]+J_R(z)
	\end{split}
	\end{eqnarray}
and 
	\begin{eqnarray}
	\begin{split}\label{eq:Q-2}
	{\cal Q}'(z) =\dfrac{2}{\t^2(n_1-n_2)}\left[z^{n_2-1}\int_{z_B}^z \eta^{-n_2}(\hat{\e}(\eta)-\hat{c}(\eta))d\eta +z^{n_1-1}\int_z^{z_B}\eta^{-n_1}(\hat{\e}(\eta)-\hat{c}(\eta))d\eta\right]
	\end{split}
	\end{eqnarray}
{
\begin{rem}
	According to \eqref{eq:Q-sol-HJBQV}, the candidate solution has two components: (i) $J_R(z),$ which is the present value of the convex conjugate of consumption and equal to the dual value function after retirement, and (ii) the difference $Q(z)-J_R(z)$ which can be regarded as the option value of retirement. In the two-person zero-sum game, the stopper attempts to maximize the option value, while the controller tries to minimize it. 
\end{rem}
}
\subsection{Verification I: Dual Problem}

\begin{pro}\label{pro:two-free-boundaries}
	The coupled algebraic equations $\phi_1(z_R,z_B)=0$ and $\phi_2(z_R,z_B)=0$ have a unique pair $(z_R,z_B)$ such that $0<z_R<\hat{z}$ and $z_B>z_R$.
\end{pro}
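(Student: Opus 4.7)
The plan is to reduce the coupled system to a one-dimensional equation by treating $\phi_1(z_R, z_B) = 0$ as an implicit relation expressing $z_B$ as a function of $z_R$, and then applying the intermediate value theorem and a strict monotonicity argument to the scalar function $\Psi(z_R) := \phi_2(z_R, G_1(z_R))$.

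First, I would show that $\phi_1(z_R, \cdot) = 0$ has a unique admissible solution $z_B = G_1(z_R) > z_R$ for each $z_R \in (0, \hat z)$. Since $\hat c$ is strictly decreasing within each of $(0, z_S]$ and $(z_S, \infty)$ and $\hat\e$ is piecewise constant, one can identify a threshold $z^* > 0$ above which $\hat\e - \hat c > 0$, so that $F(z) := \int_z^\infty \eta^{-n_1}(\hat\e - \hat c)\,d\eta$ is strictly decreasing on $(z^*, \infty)$ with $F(\infty) = 0$. Using Lemma~\ref{lem:h}, the map $G(z_R) := n_1\int_{z_R}^\infty \eta^{-n_1-1} h(\eta)\,d\eta$ has derivative $-n_1 z_R^{-n_1-1} h(z_R) > 0$ on $(0, \hat z)$ and satisfies $G(\hat z) > 0$. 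Establishing the a priori bound $G(z_R) \in (0, \sup F)$ then yields the unique implicit function $G_1(z_R) > z^*$ solving $F(G_1(z_R)) = G(z_R)$, and the implicit function theorem gives
\[
G_1'(z_R) \;=\; \frac{n_1 z_R^{-n_1-1} h(z_R)\, z_B^{n_1}}{\hat\e(z_B) - \hat c(z_B)} \;<\; 0.
\]

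Next, differentiating $\Psi$ via the chain rule yields
\[
\Psi'(z_R) \;=\; -n_2 z_R^{-n_2-1} h(z_R) \;+\; z_B^{-n_2}\bigl(\hat\e(z_B) - \hat c(z_B)\bigr)\, G_1'(z_R).
\]
Because $-n_2 > 0$, $h(z_R) < 0$, $\hat\e(z_B) - \hat c(z_B) > 0$, and $G_1'(z_R) < 0$, both terms are strictly negative, so $\Psi$ is strictly decreasing on $(0, \hat z)$. Combined with boundary signs $\Psi(0^+) > 0$ and $\Psi(\hat z^-) < 0$, the intermediate value theorem produces a unique $z_R \in (0, \hat z)$ with $\Psi(z_R) = 0$, and $z_B := G_1(z_R)$ completes the pair, with $z_B > z_R$ by construction.

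The main obstacle will be the endpoint analysis of $\Psi$ together with verification of the a priori bound $G(z_R) < \sup F$. At $z_R \to \hat z^-$, one must identify a finite limit $G_1(\hat z^-) > z^*$ and verify that the negative contribution $-n_2\int_0^{\hat z}\eta^{-n_2-1} h\,d\eta$ dominates $\int_0^{G_1(\hat z^-)}\eta^{-n_2}(\hat\e-\hat c)\,d\eta$, so as to produce $\Psi(\hat z^-) < 0$. At $z_R \to 0^+$ the integrals become improper and their convergence rests on Assumption~\ref{as:utility2}; the positive sign of $\Psi(0^+)$ must be extracted via careful comparison estimates as $\hat c \to \infty$ near $\eta = 0$. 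A subsidiary complication is the possible jump of $\hat\e - \hat c$ at $z_S$, which may require splitting cases depending on whether $z^*$ lies below, at, or above $z_S$, but which does not alter the overall strategy.
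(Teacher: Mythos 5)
Your overall strategy (collapse the two equations to one scalar equation via an implicit function, then use monotonicity plus the intermediate value theorem) is the same as the paper's, but you parametrize with the wrong equation, and this creates a genuine gap. The paper solves $\phi_2(z_R,\cdot)=0$ for $z_B=\vartheta(z_R)$ and studies $\phi_1(z_R,\vartheta(z_R))$; you instead solve $\phi_1(z_R,\cdot)=0$ for $z_B=G_1(z_R)$ on the branch $z_B>z^*$ where $\hat\e-\hat c>0$. The problem is your a priori bound $G(z_R)\in(0,\sup F)$: it fails on a whole left portion of $(0,\hat z)$. Indeed, since $h<0$ on $(0,\hat z)$, $h(\eta)/\eta$ is increasing (Lemma \ref{lem:h}) and $n_1>1$, one has $\int_{z_R}^{\infty}\eta^{-n_1-1}h(\eta)\,d\eta\to-\infty$ as $z_R\to0^+$ (this is exactly the divergence the paper exploits to get $\phi_1(z,\vartheta(z))\to+\infty$), so $G(z_R)\to-\infty$ and no $z_B$ on your branch satisfies $F(z_B)=G(z_R)$ there. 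Consequently $G_1$ and $\Psi$ are simply undefined near $z_R=0$, and your endpoint claim ``$\Psi(0^+)>0$, to be extracted by comparison estimates as $\hat c\to\infty$ near $\eta=0$'' is not a hard technical step but a statement about a quantity that does not exist; the true left endpoint of the domain of $G_1$ is the point where $G$ vanishes, at which $G_1\to+\infty$, and the limiting behaviour of $\Psi$ there requires the divergence of $\int_0^{z_B}\eta^{-n_2}(\hat\e-\hat c)\,d\eta$ as $z_B\to\infty$, a different analysis from the one you sketch.

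There are two further unresolved points. At the right end you need $G(z_R)<\sup F=F(\bar z)$ up to $z_R=\hat z$, which you flag but do not prove and which is not evidently true: if it fails, your branch terminates where $z_B=\bar z$, and there $F'(\bar z)=-\bar z^{-n_1}(\hat\e(\bar z)-\hat c(\bar z))=0$, so the implicit function theorem degenerates, $G_1'$ blows up, and the asserted sign $\Psi(\hat z^-)<0$ cannot be obtained by your argument. The paper's parametrization avoids both issues: $\partial\phi_2/\partial z_2=z_B^{-n_2}(\hat\e(z_B)-\hat c(z_B))$ is positive on the relevant range, any root of $\phi_2=0$ with $0<z_R\le\hat z$ automatically has $z_B>\bar z$ (the right-hand side $n_2\int_0^{z_R}\eta^{-n_2-1}h\,d\eta$ is positive while the left-hand side is nonpositive for $z_B\le\bar z$), and the limits $\vartheta(0^+)=\underline z$ and $\vartheta(\hat z)$ are finite, so the signs of $\phi_1$ along the curve can be computed cleanly. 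Finally, even granting your construction, uniqueness of the pair is only established among solutions with $z_B>z^*$; you must still rule out solutions of the full system with $z_R<z_B\le z^*$, which the paper does precisely through the sign argument on $\phi_2$ just described, and which is absent from your proposal.
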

\begin{proof}

Since $\hat{\e}(\eta)-\hat{c}(\eta)$ is strictly increasing in $\eta>0$ and  $\lim_{\eta\to 0}\hat{\e}(\eta)-\hat{c}(\eta) = -\infty$, $\lim_{\eta \to \infty} \hat{\e}(\eta)-\hat{c}(\eta)=\e_1$, there exists a unique $\bar{z}>0$ such that 
$$
\hat{\e}(\bar{z})-\hat{c}(\bar{z})=0. 
$$

From
$$
\int_0^{z_B}\eta^{-n_2}(\hat{\e}(\eta)-\hat{c}(\eta))d\eta=n_2\int_0^{z_R}\eta^{-n_2-1}h(\eta)d\eta, 
$$
we deduce that 
$$
z_B > \bar{z} \;\;\;\mbox{for}\;\;0<z_R\le \hat{z}. 
$$

Note that for $0<z_R\le \hat{z}$, $z_R<z_B$, 
\begin{align*}
\phi_2(z_R,z_R)&=\int_0^{z_R}\eta^{-n_2}(\hat{\e}(\eta)-\hat{c}(\eta))d\eta-n_2\int_0^{z_R}\eta^{-n_2-1}h(\eta)d\eta\\
&=\int_0^{z_R}\eta^{-n_2}(\hat{\e}(\eta)-\hat{c}(\eta))d\eta+z_R^{-n_2}h(z_R)-\int_0^{z_R}\eta^{-n_2}(\hat{\e}(\eta)-\hat{c}(\eta)+I(\eta))d\eta \\
&=z_R^{-n_2}h(z_R)-\int_0^{z_R}\eta^{-n_2}I(\eta)d\eta <0,
\end{align*}
where we have used  integration by parts in the second equality. 

Since $\lim_{\eta \to \infty}(\hat{\e}(\eta)-\hat{c}(\eta))=\e_1$, there exists a sufficiently large $M>0$ such that 
$$
\hat{\e}(\eta)-\hat{c}(\eta)\ge \frac{\e_1}{2}\;\;\mbox{for}\;\;\eta\ge M. 
$$

Note that 
$$
\int_{M}^\infty \eta^{-n_2}(\hat{\e}(\eta)-\hat{c}(\eta))d\eta \ge \dfrac{\e_1}{2} \int_{M}^\infty \eta^{-n_2}d\eta =\infty.
$$

It follows that 
\begin{equation}
\lim_{z_B \to \infty}\phi_2(z_R,z_B) = \infty. 
\end{equation}
{
Since 
$$
\dfrac{\partial \phi_2}{\partial z_2}(z_R,z_B)=z_B^{-n_2}(\hat{\e}(z_B)-\hat{c}(z_B))>0\;\;\mbox{for}\;\;0<z_R\le \hat{z},
$$ }
the intermediate value theorem implies that for given $0<z_R\le \hat{z}$ there exists a unique $\vartheta(z_R)>z_R$ such that 
$$
\phi_2(z_R, \vartheta(z_R))=0. 
$$
Note that $0<\hat{z} < \vartheta(\hat{z}).$

Thus, we have 
\begin{align}
\label{eq:1}
\phi_1(\hat{z},\vartheta(\hat{z}))=&\int_{\vartheta(\hat{z})}^\infty (\hat{\e}(\eta)-\hat{c}(\eta))d\eta -n_1\int_{\hat{z}}^\infty \eta^{-n_1-1}h(\eta)d\eta \\
=&\int_{\vartheta(\hat{z})}^\infty \eta^{-n_1} (\hat{\e}(\eta)-\hat{c}(\eta)+I(\eta))d\eta -\int_{\vartheta(\hat{z})}^\infty \eta^{-n_1}I(\eta)d\eta-n_1\int_{\hat{z}}^\infty \eta^{-n_1-1}h(\eta)d\eta \nonumber\\
=&-\vartheta(\hat{z})^{-n_1}h(\vartheta(\hat{z}))-n_1\int_{\vartheta(\hat{z})}^\infty \eta^{-n_1-1}h(\eta)d\eta-\int_{\vartheta(\hat{z})}^\infty \eta^{-n_1}I(\eta)d\eta-n_1\int_{\hat{z}}^\infty \eta^{-n_1-1}h(\eta)d\eta\nonumber\\
=&-\vartheta(\hat{z})^{-n_1}h(\vartheta(\hat{z}))-n_1\int_{\hat{z}}^{\vartheta(\hat{z})} \eta^{-n_1-1}h(\eta)d\eta-\int_{\vartheta(\hat{z})}^\infty \eta^{-n_1}I(\eta)d\eta<0,\nonumber
\end{align}
where we have used  integration by parts in the third equality and $0=h(\hat{z})<h(\vartheta(\hat{z}))$. 

Since
$$
\int_0^{\bar{z}}\eta^{-n_2}(\hat{\e}(\eta)-\hat{c}(\eta))d\eta <0,\;\;\;\lim_{z\to \infty}\int_0^{z}\eta^{-n_2}(\hat{\e}(\eta)-\hat{c}(\eta))d\eta=\infty,
$$
and 
$$
\dfrac{d}{dz}\left(\int_0^{z}\eta^{-n_2}(\hat{\e}(\eta)-\hat{c}(\eta))d\eta\right)=z^{-n_2}(\hat{\e}(z)-\hat{c}(z))>0\;\;\mbox{for}\;\;z>\bar{z}, 
$$
we deduce that there exists a unique $\underline{z}>\bar{z}$ such that 
$$
\int_0^{\underline{z}}\eta^{-n_2}(\hat{\e}(\eta)-\hat{c}(\eta))d\eta=0. 
$$

Letting $z\to 0+$, 
$$
0=\lim_{z\to0+}n_2\int_0^{z}\eta^{-n_2-1}h(\eta)d\eta=\lim_{z\to0+}\int_0^{\vartheta(z)}\eta^{-n_2}(\hat{\e}(\eta)-\hat{c}(\eta))d\eta.
$$
That is, $\vartheta(0+)=\lim_{z\to0+} \vartheta(z)=\underline{z}$. 

Hence, we have 
\begin{align*}
\lim_{z\to 0+}\phi_1(z,\vartheta(z))=&\int_{\underline{z}}^\infty \eta^{-n_1}(\hat{\e}(\eta)-\hat{c}(\eta))d\eta - n_1 \lim_{z\to 0+}\int_z^\infty \eta^{-n_1-1}h(\eta)d\eta\\
&>- n_1 \lim_{z\to 0+}\int_z^\infty \eta^{-n_1-1}h(\eta)d\eta.
\end{align*}

For a sufficiently small $0<\nu<\hat{z}$, it follows from Lemma \ref{lem:h} that 
\begin{align*}
\lim_{z\to 0+}\int_z^{\nu} \eta^{-n_1-1}h(\eta)d\eta=\lim_{z\to 0+}\int_z^{\nu} \eta^{-n_1}\dfrac{h(\eta)}{\eta}d\eta  <\dfrac{h(\nu)}{\nu}\lim_{z\to 0+}\int_z^{\nu} \eta^{-n_1}d\eta=-\infty.
\end{align*}
Thus, 
\begin{equation}\label{eq:2}
\lim_{z\to 0+}\phi_1(z,\vartheta(z))=\infty. 
\end{equation}

Note that 
$$
0=\dfrac{\partial }{\partial z_R}(\phi_2(z_R,\vartheta(z_R)))=\dfrac{\partial \phi_2}{\partial z_1}+\dfrac{\partial \phi_2}{\partial z_2}\dfrac{d \vartheta (z_R)}{d z_R}. 
$$

Since $$\dfrac{\partial \phi_2}{\partial z_1}(z_R,z_B)= -n_2 z_R^{-n_2-1}h(z_R)\;\;\mbox{and}\;\;\dfrac{\partial \phi_2}{\partial z_2}(z_R,z_B)= z_B^{-n_2}(\hat{\e}(z_B)-\hat{c}(z_B)),$$
we have 
$$
\dfrac{d \vartheta (z_R)}{d z_R}=\dfrac{n_2 z_R^{-n_2-1}h(z_R)}{z_B^{-n_2}(\hat{\e}(z_B)-\hat{c}(z_B))}.
$$

For $0<z_R<\hat{z}$, we deduce that 
\begin{align}
\label{eq:3}
\dfrac{d }{d z_R}\phi_1(z_R,\vartheta(z_R))=&\dfrac{\partial \phi}{\partial z_1}(z_R,\vartheta(z_R))+\dfrac{\partial \phi}{\partial z_2}(z_R,\vartheta(z_R))\dfrac{d\vartheta(z_R)}{dz_R}\\
=&n_1 z_R^{-n_1-1}h(z_R) -z_B^{-n_1}(\hat{\e}(z_B)-\hat{c}(z_B))\dfrac{n_2 z_R^{-n_2-1}h(z_R)}{z_B^{-n_2}(\hat{\e}(z_B)-\hat{c}(z_B))} \nonumber\\
=&z_R^{-n_1-1}h(z_R)(n_1-n_2 z_B^{n_2-1}z_R^{n_1-n_2})<0 \nonumber
\end{align}

From \eqref{eq:1}, \eqref{eq:2}, and \eqref{eq:3}, we conclude that there exists a unique $0<z_R<\hat{z}$ such that 
$$
\phi_1(z_R,\vartheta(z_R))=0.
$$

Thus, there exists a unique pair $(z_R, z_B$ such that $0<z_R<\hat{z}$, $z_B>z_R$, and 
$$
\phi_1(z_R,z_B) = \phi_2(z_R,z_B)=0.
$$

\end{proof}

{The fact $z_R<\hat{z}$ in Proposition \ref{pro:two-free-boundaries}  implies that optimal retirement decision is made only when $h(z_t)<0$, i.e., the instantaneous utility value after retirement is strictly smaller than that before retirement. This is consistent with the observation that it is optimal to exercise an option only when it is in the money (\citet{DixitPindyck}).}

\begin{pro}~\label{pro:properties-Q} 
\begin{itemize}
	\item[(a)] 	${\cal Q}(z)$ given in \eqref{eq:Q-sol-HJBQV} satisfies the HJBQV \eqref{eq:HJBQV}. Moreover, ${\cal Q}(z)$ is a continuously differentiable function for $z>0$ and twice continuously differentiable function for $z\in (0,\infty)\backslash \{z_R\}$. 
	\item[(b)]  The four regions ${\bf WR}$, ${\bf RR}$, ${\bf AR}$, and ${\bf IR}$ can be rewritten as follows: {
	\begin{align*}
	{\bf RR}&=\{z>0\mid z\le z_R \},\;\;{\bf WR}=\{z>0\mid z>z_R   \},\\
	{\bf AR}&=\{z>0\mid z_B \le z \},\;\;{\bf IR}=\{z>0 \mid 0<z<z_B \}.
	\end{align*} }
    \item[(c)] ${\cal Q}(z)$ is strictly convex in $z\in(0,z_B)$. 
    \item[(d)] 
    $$
    \lim_{z\to z_B-} {\cal Q}'(z) = 0\;\;\;\mbox{and}\;\;\;\lim_{z\to +0}{\cal Q}'(z) = -\infty.
    $$
\end{itemize}
\end{pro}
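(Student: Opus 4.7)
The plan is to build on the explicit construction \eqref{eq:Q-sol-HJBQV} together with the unique pair $(z_R,z_B)$ supplied by Proposition \ref{pro:two-free-boundaries}, and to verify the four assertions in the order (a)-smoothness, (b) together with the HJBQV part of (a), (c), and finally (d). The key tools are the ODE $\mathcal{L}\mathcal{Q} + h + \tilde{u} = 0$ on $(z_R, z_B)$, the smooth-pasting and super-contact relations \eqref{eq:smooth-pasting}, the identity $\mathcal{L}J_R + \tilde{u} = 0$, the inequality $z_R < \hat{z}$ from Proposition \ref{pro:two-free-boundaries}, Lemma \ref{lem:h}, and the properties of $J_R$ collected in Appendix \ref{sec:after-dual}.

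The smoothness of $\mathcal{Q}$ follows directly from the construction. At $z_R$, the first pair in \eqref{eq:smooth-pasting} gives $C^1$ matching. Subtracting $\mathcal{L}J_R + \tilde{u} = 0$ from $\mathcal{L}\mathcal{Q} + h + \tilde{u} = 0$ at $z_R+$ yields
$$\frac{\t^2}{2} z_R^2 \bigl(\mathcal{Q}''(z_R+) - J_R''(z_R)\bigr) = -h(z_R) > 0,$$
since $z_R \in (0, \hat{z})$ forces $h(z_R) < 0$ by Lemma \ref{lem:h}, so $\mathcal{Q}''$ has a strict positive jump at $z_R$ and $\mathcal{Q}$ fails $C^2$ there. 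The coefficients of the ODE are smooth on each of the three subintervals, so $\mathcal{Q} \in C^2$ away from $z_R$; at $z_B$ the super-contact conditions $\mathcal{Q}'(z_B) = \mathcal{Q}''(z_B) = 0$ glue smoothly with the constant extension on $[z_B, \infty)$.

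For (b) together with the HJBQV inequalities in (a), I verify \eqref{eq:HJBQV} piecewise. On $(0, z_R]$, $\mathcal{Q} = J_R$ gives $J_R - \mathcal{Q} = 0$, $-\mathcal{Q}' = -J_R' > 0$, and $\mathcal{L}J_R + h + \tilde{u} = h(z) \le 0$ by Lemma \ref{lem:h} since $z \le z_R < \hat{z}$; hence \eqref{eq:HJBQV} holds and $(0, z_R] \subseteq \mathbf{RR}$. On $(z_R, z_B)$ the ODE holds by construction, so I must show $\mathcal{Q}' < 0$ and $\mathcal{Q} > J_R$. For the former I use the explicit representation \eqref{eq:Q-2}, the sign change of $\hat{\e} - \hat{c}$ at $\bar{z} < z_B$, and the inequalities $n_2 < 0 < 1 < n_1$ to conclude negativity of the bracketed expression; for the latter I set $R := \mathcal{Q} - J_R$ so that $\mathcal{L}R + h = 0$ with $R(z_R) = R'(z_R) = 0$ and $R''(z_R+) > 0$, then combine the explicit formula \eqref{eq:Q-1} with the sign structure of $h$ (negative on $(0,\hat{z})$, positive on $(\hat{z},\infty)$) to conclude $R > 0$ on $(z_R, z_B]$. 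On $[z_B, \infty)$, $\mathcal{Q}'(z) = 0$ and $\mathcal{L}\mathcal{Q} + h + \tilde{u} = -\d\mathcal{Q}(z_B) + h(z) + \tilde{u}(z)$, which vanishes at $z_B$ by continuity with the ODE; I then use the monotonicity of $h + \tilde{u}$ (via $\tilde{u}' = -I$ together with the explicit form of $h$) to obtain the sign $\le 0$ for $z \ge z_B$, giving $[z_B, \infty) \subseteq \mathbf{AR}$.

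For (c), strict convexity on $(0, z_R)$ is inherited from $J_R$ (Appendix \ref{sec:after-dual}). On $(z_R, z_B)$, differentiating \eqref{eq:Q-2} yields
$$\mathcal{Q}''(z) = \frac{2}{\t^2(n_1-n_2)}\left[(n_2-1)z^{n_2-2}\int_{z_B}^z \eta^{-n_2}(\hat{\e}-\hat{c})\,d\eta + (n_1-1)z^{n_1-2}\int_z^{z_B}\eta^{-n_1}(\hat{\e}-\hat{c})\,d\eta\right],$$
and the sign analysis here is where I expect the \emph{main obstacle}: it requires combining $n_1 - 1 > 0 > n_2 - 1$, the sign change of $\hat{\e} - \hat{c}$ at $\bar{z}$, and the defining equation $\phi_2(z_R, z_B) = 0$ to control the cancellation between the two pieces across $\bar{z}$. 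A cleaner alternative is to rewrite the ODE as $\tfrac{\t^2}{2} z^2 \mathcal{Q}''(z) = \d\mathcal{Q}(z) - (\d-r)z\mathcal{Q}'(z) - h(z) - \tilde{u}(z)$ and bound the right-hand side from below using the already-proven $\mathcal{Q} \ge J_R$ together with $\mathcal{L}J_R = -\tilde{u}$. Finally, (d) is then immediate: $\lim_{z \to z_B-}\mathcal{Q}'(z) = 0$ is the first super-contact condition combined with $C^1$ continuity, while $\lim_{z \to 0+}\mathcal{Q}'(z) = \lim_{z \to 0+}J_R'(z) = -\infty$ follows from the asymptotics of $J_R'$ gathered in Appendix \ref{sec:after-dual}.
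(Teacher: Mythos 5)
Your overall piecewise strategy (smoothness at $z_R$ and $z_B$, verification of \eqref{eq:HJBQV} on $(0,z_R]$, $(z_R,z_B)$, $[z_B,\infty)$, then (c) and (d)) is the same as the paper's, and the parts on $(0,z_R]$, the $C^1$/$C^2$ discussion, and (d) are fine. However, two steps as written have genuine problems. First, on $[z_B,\infty)$ you target the wrong inequality: there ${\cal Q}'\equiv 0$ and $J_R-{\cal Q}<0$, so the max--min structure of \eqref{eq:HJBQV} forces ${\cal L}{\cal Q}+\tilde u+h\ \ge\ 0$ (so that the inner minimum equals $-{\cal Q}'=0$), not $\le 0$ as you claim. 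The paper computes ${\cal L}{\cal Q}(z)+\tilde u(z)+h(z)=\int_{z_B}^{z}(\hat\e(\eta)-\hat c(\eta))\,d\eta>0$ for $z\ge z_B$, precisely because $z_B>\bar z$; the monotonicity of $\tilde u+h$ that you invoke (its derivative is $\hat\e-\hat c>0$ past $\bar z$) actually delivers this $\ge 0$ and contradicts the sign you say you need, so as written your verification on this region either fails or rests on a misreading of the variational inequality.

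Second, on $(z_R,z_B)$ your direct sign arguments are incomplete, and this is exactly where the paper does the real work. Writing ${\cal Q}'(z)=\frac{2}{\t^2(n_1-n_2)}\int_z^{z_B}(\hat\e(\eta)-\hat c(\eta))\,\eta^{-1}\bigl[(z/\eta)^{n_1-1}-(z/\eta)^{n_2-1}\bigr]d\eta$, the bracket is negative for $\eta>z$, but $\hat\e-\hat c$ changes sign at $\bar z$, so for $z<\bar z$ the integrand has both signs and negativity does not follow from $n_2<0<1<n_1$ alone; the same issue hits your reading of \eqref{eq:Q-1} for ${\cal Q}>J_R$ once $z>\hat z$. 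The paper's route is: (i) show ${\cal Q}''(z_R+)=J_R''(z_R)-\tfrac{2}{\t^2 z_R^2}h(z_R)>0$ (this uses $z_R<\hat z$ from Proposition \ref{pro:two-free-boundaries} and Lemma \ref{lem:h}), combine it with ${\cal Q}''(z_B)=0$ and the unimodal (increasing-then-decreasing, driven by the sign change of $\hat\e-\hat c$ at $\bar z$) behaviour of ${\cal Q}''$ to get ${\cal Q}''>0$ on $(z_R,z_B)$, and only then deduce ${\cal Q}'<0$ from ${\cal Q}'(z_B)=0$; (ii) prove ${\cal Q}>J_R$ by showing $({\cal Q}-J_R)'>0$ on $(z_R,z_B]$ from its unimodality (via the sign of $h$), $({\cal Q}-J_R)'(z_R)=0$ and $({\cal Q}-J_R)'(z_B)=-J_R'(z_B)>0$. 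Your ``cleaner alternative'' for (c) does not close this gap: from ${\cal Q}\ge J_R$ and ${\cal L}J_R+\tilde u=0$ one only gets $\tfrac{\t^2}{2}z^2{\cal Q}''\ \ge\ \tfrac{\t^2}{2}z^2J_R''+(\d-r)z\bigl(J_R'-{\cal Q}'\bigr)-h$, and both $(\d-r)z(J_R'-{\cal Q}')$ and $-h$ can be negative (e.g.\ $h>0$ on $(\hat z,z_B)$), so the lower bound has no definite sign. Hence the ``main obstacle'' you flag is genuinely left open in your proposal, while it is the crux of the paper's proofs of both the $-{\cal Q}'\ge 0$ part of (a) and of (c).
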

\begin{proof}
	
\noindent{\bf  (a)} By  construction of ${\cal Q}(z)$ in \eqref{eq:Q-sol-HJBQV}, we can easily confirm that ${\cal Q}(z)$ is a continuously differentiable function for $z>0$ and twice continuously differentiable function for $z\in (0,\infty)\backslash \{z_R\}$. 	

We will prove that ${\cal Q}(z)$ in \eqref{eq:Q-sol-HJBQV} satisfies the HJBQV \eqref{eq:HJBQV}. 

\begin{itemize}
	\item[(i)] the case $z\in (0,z_R]$. 
	
	Since ${\cal Q}(z)=J_R(z)$ for $z\in(0,z_R]$, we have  
	$$
	-\dfrac{d{\cal Q}}{d z} = -J_R'(z)> 0. 
	$$
	
	It follows from ${\cal L}J_R +\tilde{u}(z)=0$ that 
	$$
	{\cal L}{\cal Q}(z) + \tilde{u}(z)+h(z) = {\cal L}J_R +\tilde{u}(z)+h(z) = h(z) < 0 \;\;\mbox{for}\;\;z\in(0,z_R]. 
	$$
	
	Thus, we deduce that 
	$$
	\min\left\{{\cal L}{\cal Q}(z)+\tilde{u}(z)+h(z) , -\dfrac{d{\cal Q}}{dz}(z) \right\}< 0. 
	$$ 
	That is, 
		$$
	\max\left\{\min\left\{{\cal L}{\cal Q}(z)+\tilde{u}(z)+h(z) , -\dfrac{d{\cal Q}}{dz}(z) \right\}, J_R(z)-{\cal Q}(z)\right\}= 0. 
	$$

   \item[(ii)] the case  $z\in (z_R,z_B).$
   
 By  construction of ${\cal Q}(z)$, 
 $$
 {\cal L}{\cal Q}(z) + \tilde{u}(z)+h(z)=0. 
 $$
 
 From \eqref{eq:Q-1}, we deduce that for $z_R<z<z_B$, 
 	\begin{align*}
 	{\cal Q}(z)-J_R(z) =&\dfrac{2}{\t^2(n_1-n_2)}\left[z^{n_2}\int_{z_R}^z \eta^{-n_2-1}h(\eta)d\eta +z^{n_1}\int_z^{z_R} \eta^{-n_1-1}h(\eta)d\eta\right]\\
 	=&\dfrac{2}{\t^2(n_1-n_2)}\left[z^{n_2}\int_{z_R}^z \eta^{-n_2-1}h(\eta)d\eta -z^{n_1}\int_{z_R}^z \eta^{-n_1-1}h(\eta)d\eta\right].
 	\end{align*}
This leads to
	\begin{align*}
	({\cal Q}-J_R)'(z)=&\dfrac{2}{\t^2(n_1-n_2)}\left[n_2 z^{n_2-1}\int_{z_R}^z \eta^{-n_2-1}h(\eta)d\eta -n_1 z^{n_1-1}\int_{z_R}^z \eta^{-n_1-1}h(\eta)d\eta\right].
	\end{align*}

Since $h(z)<0$ for $0<z<\hat{z}$ and $h(z)>0$ for $z>\hat{z}$, $({\cal Q}-J_R)'(z)$ is increasing in $z\in(z_R,\hat{z})$ and decreasing in $z\in(\hat{z},z_B)$. 

It follows from ${\cal Q}(z_R)=J_R(z_R)$ and $Q'(z_B) =0$ that 
$$
({\cal Q}-J_R)(z_R)=0
$$
and 
$$
({\cal Q}-J_R)'(z_B)=-J_R'(z_B) > 0.
$$
	 
Hence, we  conclude that 
$$
({\cal Q}-J_R)'(z)>0\;\;\;\mbox{for}\;\;z\in(z_R,z_B].
$$	 
It follows from ${\cal Q}(z_R)=J_R(z_R)$ that 
\begin{equation}\label{eq:inq:Q:J0}
{\cal Q}(z) > J_R(z)\;\;\mbox{for}\;\;z\in(z_R,z_B].
\end{equation}

From \eqref{eq:Q-2}, 
	\begin{align*}
	J'(z)=\dfrac{2}{\t^2(n_1-n_2)}\left[z^{n_2-1}\int_{z_B}^z \eta^{-n_2}(\hat{\e}(\eta)-\hat{c}(\eta))d\eta +z^{n_1-1}\int_z^{z_B}\eta^{-n_1}(\hat{\e}(\eta)-\hat{c}(\eta))d\eta\right]
	\end{align*}
for $ z_R<z<z_B$.

Hence, we have 	
	\begin{align*}
	J''(z)=\dfrac{2}{\t^2(n_1-n_2)}\left[(1-n_2)z^{n_2-2}\int_{z}^{z_B}\eta^{-n_2}(\hat{\e}(\eta)-\hat{c}(\eta))d\eta +(1-n_1)z^{n_1-2}\int_z^{z_B}\eta^{-n_1}(\hat{\e}(\eta)-\hat{c}(\eta))d\eta\right]
	\end{align*}
for $z_R<z<z_B$. 

Since $\hat{\e}(\eta)-\hat{c}(\eta)>0$ for $\eta>\bar{z}$ and $\hat{\e}(\eta)-\hat{c}(\eta)<0$ for $\eta<\bar{z}$, $J''(z)$ is increasing in $z\in(z_R,\bar{z})$ and decreasing in $z\in(\bar{z},z_B)$. 

Note that for $z_R<z<z_B$, 
$$
\dfrac{\t^2}{2}z^2 {\cal Q}''(z)+(\d-r)z{\cal Q}'(z)-\d {\cal Q}(z) +\tilde{u}(z)+h(z)=0.
$$

Letting $z \to z_R+$, we derive that 
	\begin{align*}
	0=&\lim_{z\to z_R+}\left(\dfrac{\t^2}{2}z^2 {\cal Q}''(z)+(\d-r)z{\cal Q}'(z)-\d {\cal Q}(z) +\tilde{u}(z)+h(z)\right)\\
	=&\dfrac{\t^2}{2}z_R^2\lim_{z\to z_R+}{\cal Q}''(z)-\dfrac{\t^2}{2}z_R^2{\cal J}_0''(z)+h(z_R),
	\end{align*}
where we have used the fact that ${\cal Q}(z)$ is continuously differentiable and ${\cal L}J_R(z_R)+\tilde{u}(z_R)=0$. 

It follows from the strictly convexity of $J_R$ that 
\begin{align*}
{\cal Q}''(z_R+) =J_R''(z_R)-\dfrac{2}{z_R^2\t^2}h(z_R) > 0. 
\end{align*}

Since ${\cal Q}''(z_B)=0$, we deduce that ${\cal Q}''(z)>0$ for $z_R<z<z_B$. That is, ${\cal Q}(z)$ is strictly increasing in $z\in(z_R,z_B)$. It follows from ${\cal Q}'(z_B)=0$ that 
$$
{\cal Q}'(z) > 0 \;\;\mbox{for}\;\;z\in(z_R,z_B). 
$$

Thus, ${\cal Q}(z)$ satisfies 
$$
	\max\left\{\min\left\{{\cal L}{\cal Q}_0(z)+\tilde{u}(z)+h(z) , -\dfrac{d{\cal Q}}{dz}(z) \right\}, J_R(z)-{\cal Q}(z)\right\}= 0. 
$$

   \item[(iii)] the case  $z\in [z_B,\infty).$
   
   Since ${\cal Q}'(z)={\cal Q}''(z)=0$ for $z\ge z_B$, we deduce that 
   	\begin{align*}
   	{\cal L}{\cal Q}(z) + \tilde{u}(z)+h(z)=&{\cal L}{\cal Q}(z_B) + \tilde{u}(z_B)+h(z_B)-\d {\cal Q}(z)+\d {\cal Q}(z_B)+\tilde{u}(z)+h(z)-\tilde{u}(z_B)-h(z_B)\\
   	=&-\d {\cal Q}(z)+\d {\cal Q}(z_B)+\tilde{u}(z)+h(z)-\tilde{u}(z_B)-h(z_B)\\
   	=&\int_{z_B}^z \left(\tilde{u}'(\eta)+h'(\eta)-\d {\cal Q}'(\eta)\right)d\eta \\
   	=&\int_{z_B}^z \left(\hat{\e}(\eta)-\hat{c}(\eta)\right)d\eta>0.
   	\end{align*}

It follows from $J_R'(z) =-{\cal X}(z)<0$ that for $z\ge z_B$
\begin{align*}
J_R(z)-{\cal Q}(z) \le J_R(z_B)-{\cal Q}(z_B)<0,
\end{align*}
where we have used the fact that ${\cal Q}(z)>J_R(z)$ for $z\in(z_R,z_B]$ (see \eqref{eq:inq:Q:J0}).

Thus, we deduce that 
	\begin{align*}
		&\max\left\{\min\left\{{\cal L}{\cal Q}_0(z)+\tilde{u}(z)+h(z) , -\dfrac{d{\cal Q}}{dz}(z) \right\}, J_R(z)-{\cal Q}(z)\right\}\\
		=&\max\left\{0, J_R(z)-{\cal Q}(z)\right\}
		=0. 
	\end{align*}
\end{itemize}

By (i), (ii), and (iii), we conclude that for all $z>0$ ${\cal Q}(z)$ satisfies 
\begin{align*}
\max\left\{\min\left\{{\cal L}{\cal Q}_0(z)+\tilde{u}(z)+h(z) , -\dfrac{d{\cal Q}}{dz}(z) \right\}, J_R(z)-{\cal Q}(z)\right\}=0.
\end{align*}
\bigskip

\noindent{\bf  (b)}  
From (i), (ii), and (iii) in the proof of part (a), we can rewrite the four regions ${\bf WR}$, ${\bf RR}$, ${\bf AR}$, and ${\bf IR}$ as follows: {
\begin{align*}
{\bf RR}&=\{z>0\mid z\le z_R \},\;\;{\bf WR}=\{z>0\mid z>z_R   \},\\
{\bf AR}&=\{z>0\mid z_B \le z \},\;\;{\bf IR}=\{z>0 \mid 0 <z<z_B \}.
\end{align*} }

\noindent{\bf  (c)}  Since ${\cal Q}(z)=J_R(z)$ for $z\in(0,z_R)$, it follows from \eqref{eq:sign-X-prime} and \eqref{eq:J0-prime} in Appendix \ref{sec:after-dual} that 
\begin{equation*}
	{\cal Q}''(z) = J_R''(z) > 0. 	
\end{equation*}
From (ii) in the proof of part (a), 
$$
{\cal Q}''(z) > 0\;\;\;\mbox{for}\;\;z\in(z_R,z_B).
$$

Hence, ${\cal Q}(z)$ is strictly convex in $z\in(0,z_R)$. \\
\bigskip

\noindent{\bf (d)} By construction of ${\cal Q}(z)$ in \eqref{eq:Q-sol-HJBQV}, it is clear that 
$$
\lim_{z\to z_B- } {\cal Q}'(z) = 0. 
$$

On the other hand,  it follows from \eqref{eq:XR-inf}   that 
\begin{align*}
	\lim_{z\to+0}{\cal Q}'(z) =& \lim_{z\to+0} J_R'(z)=-\lim_{z\to+0} {\cal X}_R'(z) = -\infty.
\end{align*}

\end{proof}

We are now ready to state the verification theorem for Problem \ref{pr:dual}. 
\begin{thm}\label{thm:NASH} Let $z_R$ and $z_B$ be as in Proposition \ref{pro:two-free-boundaries}, and consider  strategies $({\cal D}^{z_B}, \tau_{z_R})$ defined by 
	\begin{equation}\label{eq:barriers}
	{\cal D}_t^{z_B}= \min\left\{1, \inf_{0\le s \le t }\dfrac{z_B}{{\cal Y}_s}\right\}\;\;\mbox{for}\;\;t\ge 0,\;\;\;\tau_{z_R}=\inf\{t\ge 0 \mid {\cal Z}_t^{{\cal D}^{z_B}} < z_R \}
\end{equation}
with ${\cal Z}_t^{{\cal D}^{z_B}}= {\cal Y}_t {\cal D}_t^{z_B}\;\;\mbox{for}\;\;t\ge 0.$
	 
For ${\cal Q}(z)$ in \eqref{eq:Q-sol-HJBQV}, we have 
	\begin{itemize}
		\item[(a)]  ${\cal Q}(z)\le {\cal J}_0(z;{\cal D}, \tau_{z_R})$ for any ${\cal D}\in{\cal NI}$.
		\item[(b)] ${\cal Q}(z)\ge {\cal J}_0(z;{\cal D}^{z_B}, \tau)$ for any $\tau\in{\cal S}$
		\item[(c)] ${\cal Q}(z)= {\cal J}_0(z;{\cal D}^{z_B}, \tau_{z_R})$
	\end{itemize}
That is, the pair $(\widehat{\cal D}, \hat{\tau})=({\cal D}^{z_B}, \tau_{z_R})$ is a {\it Nash-equilibrium} for Problem \ref{pr:dual} and ${\cal Q}(z)=J(z)$ is the value function of the game in \eqref{eq:game}. 
\end{thm}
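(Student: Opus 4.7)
The plan is to verify (a), (b), and (c) by applying the generalized It\^{o} formula to $e^{-\delta t}{\cal Q}({\cal Z}_t^{\cal D})$, exploiting the analytical facts collected in Proposition~\ref{pro:properties-Q} together with the HJBQV~\eqref{eq:HJBQV}. Since ${\cal Q}$ is $C^1$ on $(0,\infty)$ and $C^2$ off $z_R$, and ${\cal Z}^{\cal D}$ is RCLL with $d{\cal D}^c\le 0$ and $\Delta{\cal D}\le 0$ (as ${\cal D}\in{\cal NI}$), It\^{o}'s formula produces a drift integral involving ${\cal L}{\cal Q}$, a local martingale $M$ (which a standard localization argument, using Assumption~\ref{as:utility2} and the local boundedness of ${\cal Q}'$, turns into a true martingale on $[0,\tau\wedge T]$), a singular-control integral $\gamma({\cal D}):=\int_0^{\tau\wedge T}e^{-\delta t}{\cal Q}'({\cal Z}^{\cal D}_{t-}){\cal Y}_t\,d{\cal D}_t^c$, and a jump sum $\delta'({\cal D}):=\sum_{0<t\le\tau\wedge T}e^{-\delta t}[{\cal Q}({\cal Z}_t^{\cal D})-{\cal Q}({\cal Z}_{t-}^{\cal D})]$. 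The sign facts I will exploit are: (i) ${\cal Q}\ge J_R$ with equality exactly on ${\bf RR}=(0,z_R]$; (ii) ${\cal Q}'\le 0$ with equality exactly on ${\bf AR}=[z_B,\infty)$; (iii) ${\cal L}{\cal Q}+\tilde u+h\equiv 0$ on $(z_R,z_B)$, equals $h<0$ on ${\bf RR}$, and is $\ge 0$ on ${\bf AR}$. Combined with ${\cal D}\in{\cal NI}$, these immediately give $\gamma({\cal D})\ge 0$ and $\delta'({\cal D})\ge 0$ for every admissible ${\cal D}$.

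Parts (c) and (b) follow by direct calculation. In (c), with $({\cal D}^{z_B},\tau_{z_R})$, the reflection structure of ${\cal D}^{z_B}$ makes ${\cal Z}^{{\cal D}^{z_B}}$ continuous and confined to $(z_R,z_B]$ on $[0,\tau_{z_R})$ (for an initial $z>z_R$), with ${\cal Z}^{{\cal D}^{z_B}}_{\tau_{z_R}-}=z_R$; the Skorohod condition supports $d{\cal D}^{z_B,c}$ on $\{{\cal Z}=z_B\}$ where ${\cal Q}'=0$, so $\gamma=\delta'=0$, and since ${\cal L}{\cal Q}+\tilde u+h\equiv 0$ along the path, It\^{o} collapses directly to ${\cal Q}(z)={\cal J}_0(z;{\cal D}^{z_B},\tau_{z_R})$ after $T\to\infty$ and using ${\cal Q}(z_R)=J_R(z_R)$. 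In (b), for arbitrary $\tau\in{\cal S}$ with ${\cal D}^{z_B}$ fixed, the path still lies in $(0,z_B]$ with $\gamma=\delta'=0$; on $(0,z_R]$ we have ${\cal L}{\cal Q}=-\tilde u\le -(\tilde u+h)$ (because $h\le 0$), with equality on $(z_R,z_B]$, so ${\cal L}{\cal Q}\le -(\tilde u+h)$ along the entire path, and combining this with ${\cal Q}\ge J_R$ and $T\to\infty$ delivers ${\cal Q}(z)\ge {\cal J}_0(z;{\cal D}^{z_B},\tau)$.

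The main obstacle is (a), where ${\cal D}$ is arbitrary and ${\cal Z}^{\cal D}$ can visit ${\bf AR}$, on which ${\cal L}{\cal Q}+\tilde u+h\ge 0$ has the wrong sign for a direct PDE bound. What salvages the argument is twofold: on ${\bf AR}$ the identity ${\cal Q}'\equiv 0$ switches off the singular-control contribution there, and the equivalent form $\tilde u+h\ge -{\cal L}{\cal Q}=\delta{\cal Q}(z_B)$ of the HJBQV says that time spent in ${\bf AR}$ yields running payoff that overpays the discount decay of ${\cal Q}(z_B)$; meanwhile, any deviation of ${\cal D}$ from ${\cal D}^{z_B}$ contributes strictly nonnegative $\gamma,\delta'$. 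My plan is to rearrange It\^{o} (with the jump of ${\cal Q}$ at $\tau_{z_R}$ absorbed into $\delta'$ so that the terminal is the pre-jump value) into
\[
{\cal Q}(z)-{\cal J}_0(z;{\cal D},\tau_{z_R})=-\mathbb{E}\!\left[\int_0^{\tau_{z_R}}\!\!e^{-\delta t}({\cal L}{\cal Q}+\tilde u+h)({\cal Z}_{t-}^{\cal D})\,dt\right]+\mathbb{E}\!\left[e^{-\delta\tau_{z_R}}({\cal Q}-J_R)({\cal Z}_{\tau_{z_R}-}^{\cal D})\right]-\gamma({\cal D})-\delta'({\cal D}),
\]
and then to show that the ${\bf AR}$-part of the first integral (nonnegative) and the terminal mismatch $({\cal Q}-J_R)({\cal Z}_{\tau_{z_R}-}^{\cal D})\ge 0$ are dominated by the ${\bf RR}$-part $\int_{\bf RR}(-h)\,dt\ge 0$ of the first integral together with $\gamma+\delta'$. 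The hardest step is making this balancing uniform over ${\cal D}\in{\cal NI}$ and across repeated ${\bf IR}$-${\bf AR}$ excursions of ${\cal Z}^{\cal D}$; the essential ingredients are the reflection identity ${\cal Q}\equiv {\cal Q}(z_B)$ on ${\bf AR}$, the complementarity ${\cal Q}'\cdot{\bf 1}_{\bf AR}\equiv 0$, and ${\cal Q}=J_R$ on ${\bf RR}$. Once (a), (b), and (c) are assembled, chaining them yields the saddle-point inequality~\eqref{eq:NE}, so $(\widehat{\cal D},\hat\tau)=({\cal D}^{z_B},\tau_{z_R})$ is a Nash equilibrium and ${\cal Q}=J$ is the value function of the game.
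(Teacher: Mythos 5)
Your (b) and (c) follow the paper's route and are essentially correct: apply the generalized It\^{o} formula to $e^{-\delta t}{\cal Q}({\cal Z}_t)$, use that under ${\cal D}^{z_B}$ the control is continuous and acts only where ${\cal Q}'=0$ (so the singular and jump terms vanish), use ${\cal L}{\cal Q}+\tilde u+h\le 0$ on $(0,z_B]$ together with ${\cal Q}\ge J_R$ for (b), and the exact ODE plus confinement of the state to $[z_R,z_B]$ for (c); what remains is only the localization and limit bookkeeping (dominated and monotone convergence, Fatou, Assumption~\ref{as:utility2}), which the paper spells out and you would need to as well.

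The genuine gap is in (a): your sign bookkeeping is reversed, and the step you defer as ``the hardest'' is both unnecessary and, as formulated, false. In the identity you display, the drift contribution from ${\bf AR}$ enters as $-\mathbb{E}\bigl[\int e^{-\delta t}({\cal L}{\cal Q}+\tilde u+h)\,{\bf 1}_{\{{\cal Z}_t^{\cal D}\ge z_B\}}dt\bigr]\le 0$; that is, on ${\bf AR}$ the inequality ${\cal L}{\cal Q}+\tilde u+h\ge 0$ points in exactly the \emph{favorable} direction for ${\cal Q}(z)\le{\cal J}_0(z;{\cal D},\tau_{z_R})$ (this is precisely the paper's assertion that the drift term has nonnegative expectation). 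The only unfavorable drift region is ${\bf RR}$, where ${\cal L}{\cal Q}+\tilde u+h=h<0$, and it does not appear because the horizon is $\tau_{z_R}$, before which the state stays at or above $z_R$; the terminal term is controlled not by any balancing but by ${\cal Q}=J_R$ at the stopping level together with Fatou and $e^{-\delta t}{\cal Q}(z_R)\to 0$ on $\{t<\tau_{z_R}\}$. Hence no domination ``uniform over ${\cal D}$ and across ${\bf IR}$--${\bf AR}$ excursions'' is needed: one simply discards the three nonnegative terms ($\gamma\ge 0$ and $\delta'\ge 0$ from ${\cal Q}'\le 0$ and ${\cal D}$ non-increasing, plus the drift term) and passes to the limit. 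Worse, the inequality you propose to establish --- that the ${\bf AR}$-part of the drift integral plus the terminal mismatch is dominated by $\int_{\bf RR}(-h)\,dt+\gamma+\delta'$ --- fails in general: take ${\cal D}\equiv 1$, so ${\cal Z}^{\cal D}={\cal Y}$, $\gamma=\delta'=0$, the path stays above $z_R$ until $\tau_{z_R}$ (so the ${\bf RR}$-part and the terminal mismatch vanish), yet with positive probability it spends time above $z_B$, making your left-hand side strictly positive while the right-hand side is zero. So the plan for (a) is misdirected; with the correct signs, (a) is in fact the most direct of the three inequalities, exactly as in the paper's proof.
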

\begin{proof}

	
		\noindent {\bf  (a)}  Define a process  $G$ by 
	\begin{align}
	G_t^{{\cal D}} =&\int_0^t e^{-\d s} \left\{\left(\tilde{u}\left(\frac{{\cal Z}_s^{\cal D}}{\k_1} \right)+\e_1 {\cal Z}_s^{\cal D}\right){\bf 1}_{\{{\cal Z}_s^{\cal D}>z_S\}}+\left(\tilde{u}\left(\frac{{\cal Z}_s^{\cal D}}{\k_2} \right)+\e_2 {\cal Z}_s^{\cal D}\right){\bf 1}_{\{{\cal Z}_s^{\cal D}\le z_S\}}\right\}ds +e^{-\d t}{\cal Q}({\cal Z}_t^{\cal D}).
	\end{align}

Since ${\cal Q}$ is $C^1$ as well as $C^2$ outside a finite set $\{z_R\}$, we can still apply  generalized It\^{o}'s lemma (see Proposition 9 in \citet{Harrison}, Exercise 6.24 in \citet{KS2}).

It follows that 
\begin{align}
\label{eq:ito}
&d G_t^{{\cal D}}\\ =&e^{-\d t}\left\{\left(\tilde{u}\left(\frac{{\cal Z}_t^{\cal D}}{\k_1} \right)+\e_1 {\cal Z}_t^{\cal D}\right){\bf 1}_{\{{\cal Z}_t^{\cal D}>z_S\}}+\left(\tilde{u}\left(\frac{{\cal Z}_t^{\cal D}}{\k_2} \right)+\e_2 {\cal Z}_t^{\cal D}\right){\bf 1}_{\{{\cal Z}_t^{\cal D}\le z_S\}}\right\}dt+e^{-\d t}d{\cal Q}({\cal Z}_t^{\cal D}) -\d {\cal Q}({\cal Z}_t^{\cal D})dt\nonumber\\
=&e^{-\d t}\left\{\left(\tilde{u}\left(\frac{{\cal Z}_t^{\cal D}}{\k_1} \right)+\e_1 {\cal Z}_t^{\cal D}\right){\bf 1}_{\{{\cal Z}_t^{\cal D}>z_S\}}+\left(\tilde{u}\left(\frac{{\cal Z}_t^{\cal D}}{\k_2} \right)+\e_2 {\cal Z}_t^{\cal D}\right){\bf 1}_{\{{\cal Z}_t^{\cal D}\le z_S\}}\right\}dt-\d {\cal Q}({\cal Z}_t^{\cal D})dt+e^{-\d t}{\cal Q}'({\cal Z}_t^{\cal D}) d{\cal Z}_t^{\cal D}\nonumber\\
+&e^{-\d t}\dfrac{1}{2}{\cal Q}''({\cal Z}_t^{\cal D})(d{\cal Z}_t^{\cal D})^2 + e^{-\d t}{\cal Q}' ({\cal Z}_t^{\cal D}) d{\cal D}_t\nonumber\\
=&e^{-\d t}\left\{\frac{\t^2}{2}({\cal Z}_t^{\cal D})^2{\cal Q}''({\cal Z}_t^{\cal D})+(\d-r){\cal Z}_t^{\cal D}{\cal Q}'({\cal Z}_t^{\cal D})-\d {\cal Q}({\cal Z}_t^{\cal D})+\left(\tilde{u}\left(\frac{{\cal Z}_t^{\cal D}}{\k_1} \right)+\e_1 {\cal Z}_t^{\cal D}\right){\bf 1}_{\{{\cal Z}_t^{\cal D}>z_S\}}\right.\nonumber\\+&\left.\left(\tilde{u}\left(\frac{{\cal Z}_t^{\cal D}}{\k_2} \right)+\e_2 {\cal Z}_t^{\cal D}\right){\bf 1}_{\{{\cal Z}_t^{\cal D}\le z_S\}}\right\}dt
+e^{-\d t}{\cal Q}'({\cal Z}_t^{\cal D})d{\cal D}_t^{c} +e^{-\d t}\left({\cal Q}({\cal Z}_t^{\cal D})-{\cal Q}({\cal Z}_{t-}^{\cal D})\right)-e^{-\d t}\t{\cal Z}_t^{\cal D}{\cal Q}'({\cal Z}_t^{\cal D})dW_t,\nonumber
\end{align}
where $\D {\cal D}_t \equiv {\cal D}_t - {\cal D}_{t-}$ and  ${\cal D}^c$ is the continuous part of ${\cal D}$. 

Let us denote ${\tau}_n$ by ${\tau}_n=\inf\{t\ge 0 \mid {\cal Z}_t^{\cal D} > n \}\wedge \tau_{z_R}$ for each $n\in\mathbb{N}$. Then, we have
	\begin{align*}
	&G_{{\tau}_n\wedge t}^{{\cal D}} = {\cal Q}(z) +\underbrace{\int_0^{{\tau}_n\wedge t}e^{-\d s}{\cal Q}'({\cal Z}_s^{\cal D})d{\cal D}_s^c}_{{\bf (A)}}+\underbrace{\sum_{s=0}^{{\tau}_n\wedge t}e^{-\d s}\left({\cal Q}({\cal Z}_s^{\cal D})-{\cal Q}({\cal Z}_{s-}^{\cal D})\right)}_{{\bf (B)}}+\underbrace{\int_0^{{\tau}_n\wedge t}e^{-\d s}(-\t){\cal Z}_s^{\cal D}{\cal Q}'({\cal Z}_s^{\cal D})dW_s}_{{\bf (C)}}\\+&\underbrace{\int_0^{{\tau}_n\wedge t} e^{-\d s}\left\{{\cal L} {\cal Q}({\cal Z}_s^{\cal D})+\left(\tilde{u}\left(\frac{{\cal Z}_s^{\cal D}}{\k_1} \right)+\e_1 {\cal Z}_s^{\cal D}\right){\bf 1}_{\{{\cal Z}_s^{\cal D}>z_S\}}+\left(\tilde{u}\left(\frac{{\cal Z}_s^{\cal D}}{\k_2} \right)+\e_2 {\cal Z}_s^{\cal D}\right){\bf 1}_{\{{\cal Z}_s^{\cal D}\le z_S\}}\right\}ds}_{{\bf (D)}}.
	\end{align*}
Taking expectations, we have 
	\begin{align*}
	&\mathbb{E}\left[\int_0^{{\tau}_n\wedge t} e^{-\d s} \left\{\left(\tilde{u}\left(\frac{{\cal Z}_s^{\cal D}}{\k_1} \right)+\e_1 {\cal Z}_s^{\cal D}\right){\bf 1}_{\{{\cal Z}_s^{\cal D}>z_S\}}+\left(\tilde{u}\left(\frac{{\cal Z}_s^{\cal D}}{\k_2} \right)+\e_2 {\cal Z}_s^{\cal D}\right){\bf 1}_{\{{\cal Z}_s^{\cal D}\le z_S\}}\right\}ds +e^{-\d ({\tau}_n \wedge t)}{\cal Q}({\cal Z}_{({{\tau}_n}\wedge t)-}^{\cal D})\right]\\
	=&{\cal Q}(z)+\mathbb{E}[{\bf (A)}]+\mathbb{E}[{\bf (B)}]+\mathbb{E}[{\bf (C)}]+\mathbb{E}[{\bf (D)}].
	\end{align*}
Let us denote ${\cal M}_t$ by 
\begin{equation}
	{\cal M}_t  = \int_0^t e^{-\d s}(-\t){\cal Z}_s^{\cal D}{\cal Q}'({\cal Z}_s^{\cal D})dW_s\;\;\;\mbox{for}\;\;t\ge0.
\end{equation}

Since ${\cal Q}$ satisfies  HJBQV \eqref{eq:HJBQV} and ${\cal M}_{{\tau}_n\wedge t}$ is a martingale, we have
\begin{equation}\label{eq:inequality1}
	\mathbb{E}\left[{\bf (A)}\right]\ge 0,\;\;\mathbb{E}\left[{\bf (C)}\right]=0,\;\;\mbox{and}\;\;\mathbb{E}\left[{\bf (D)}\right] \ge 0. 
\end{equation}

Note that 
\begin{equation}
	{\cal Q}({\cal Z}_s^{\cal D})-{\cal Q}({\cal Z}_{s-}^{\cal D})=\int_{{\cal Z}_s^{\cal D}-\D {\cal Z}_s^{\cal D}}^{{\cal Z}_s^{\cal D}}{\cal Q}_z(\nu)d\nu \ge 0, 
\end{equation}
where $\D {\cal Z}_s^{\cal D} = {\cal Y}_s \D {\cal D}_s \le 0$. 

It follows that 
\begin{equation}\label{eq:inequality2}
	\mathbb{E}\left[{\bf (B)}\right] \ge 0.
\end{equation}

Hence, we deduce that 
\begin{align}
\label{eq:1-1}
{\cal Q}(z)\le &	\mathbb{E}\left[\int_0^{{\tau}_n\wedge t} e^{-\d s} \left\{\left(\tilde{u}\left(\frac{{\cal Z}_s^{\cal D}}{\k_1} \right)+\e_1 {\cal Z}_s^{\cal D}\right){\bf 1}_{\{{\cal Z}_s^{\cal D}>z_S\}}+\left(\tilde{u}\left(\frac{{\cal Z}_s^{\cal D}}{\k_2} \right)+\e_2 {\cal Z}_s^{\cal D}\right){\bf 1}_{\{{\cal Z}_s^{\cal D}\le z_S\}}\right\}ds\right.\\ +&\left.e^{-\d ({\tau}_n \wedge t)}{\cal Q}({\cal Z}_{({{\tau}_n}\wedge t)-}^{\cal D})\right] .\nonumber
\end{align}

Let us temporarily denote $\varphi(z)$ by 
	\begin{align*}
		\varphi(z):=&h(z)+\tilde{u}(z)   =\left(\tilde{u}\left(\frac{z}{\k_1} \right)+\e_1 z\right){\bf 1}_{\{z>z_S\}}+\left(\tilde{u}\left(\frac{z}{\k_2} \right)+\e_2 z\right){\bf 1}_{\{z\le z_S\}}.
	\end{align*}
For $s\in[0,\tau_n\wedge t)$, we have  $$z_R \le {\cal Z}_s^{\cal D} \le {\cal Y}_s.$$ 

This leads to
	\begin{align*}
	\left(\varphi({\cal Z}_s^{\cal D})\right)_+=& \left(\left(\tilde{u}\left(\frac{{\cal Z}_s^{\cal D}}{\k_1} \right)+\e_1 {\cal Z}_s^{\cal D}\right){\bf 1}_{\{{\cal Z}_s^{\cal D}>z_S\}}+\left(\tilde{u}\left(\frac{{\cal Z}_s^{\cal D}}{\k_2} \right)+\e_2 {\cal Z}_s^{\cal D}\right){\bf 1}_{\{{\cal Z}_s^{\cal D}\le z_S\}}\right)_+\\
		\le&\left(\tilde{u}\left(\frac{z_R}{\k_2} \right)+\e_1 {\cal Y}_s\right)_+\le \left(\tilde{u}\left(\frac{z_R}{\k_2} \right)\right)_+ +\e_1 {\cal Y}_s.
	\end{align*}
Thus, we deduce that 
\begin{align*}
			&\mathbb{E}\left[\int_0^{{\tau}_n\wedge t} e^{-\d s} 	\left(\varphi({\cal Z}_s^{\cal D})\right)_+ ds\right]
			\le\mathbb{E}\left[\int_0^{\infty} e^{-\d s} \left\{\left(\tilde{u}\left(\frac{z_R}{\k_2} \right)\right)_+ +\e_1 {\cal Y}_s\right\}ds\right]<\infty. 
\end{align*}
The {\it dominated convergence theorem} implies that 
\begin{equation}\label{eq:DCT}
	\lim_{t\uparrow \infty}\lim_{n\uparrow \infty} \mathbb{E}\left[\int_0^{{\tau}_n\wedge t} e^{-\d s} 	\left(\varphi({\cal Z}_s^{\cal D})\right)_+ ds\right]=\mathbb{E}\left[\int_0^{\tau_{z_R}} e^{-\d s} 	\left(\varphi({\cal Z}_s^{\cal D})\right)_+ ds\right]<\infty.
\end{equation}
Moreover, the {\it monotone convergence theorem} implies that 
\begin{equation}\label{eq:MCT}
	\lim_{t\uparrow \infty}\lim_{n\uparrow \infty} \mathbb{E}\left[\int_0^{{\tau}_n\wedge t} e^{-\d s} 	\left(\varphi({\cal Z}_s^{\cal D})\right)_- ds\right]=\mathbb{E}\left[\int_0^{\tau_{z_R}} e^{-\d s} 	\left(\varphi({\cal Z}_s^{\cal D})\right)_- ds\right]
\end{equation}
It follows from \eqref{eq:DCT} and \eqref{eq:MCT} that 
\begin{align}
\label{eq:1-2}
&\lim_{t\uparrow \infty}\lim_{n\uparrow \infty}\mathbb{E}\left[\int_0^{{\tau}_n\wedge t} e^{-\d s} \left\{\left(\tilde{u}\left(\frac{{\cal Z}_s^{\cal D}}{\k_1} \right)+\e_1 {\cal Z}_s^{\cal D}\right){\bf 1}_{\{{\cal Z}_s^{\cal D}>z_S\}}+\left(\tilde{u}\left(\frac{{\cal Z}_s^{\cal D}}{\k_2} \right)+\e_2 {\cal Z}_s^{\cal D}\right){\bf 1}_{\{{\cal Z}_s^{\cal D}\le z_S\}}\right\}ds \right]\\
=&\mathbb{E}\left[\int_0^{\tau_{z_R}} e^{-\d s} \left\{\left(\tilde{u}\left(\frac{{\cal Z}_s^{\cal D}}{\k_1} \right)+\e_1 {\cal Z}_s^{\cal D}\right){\bf 1}_{\{{\cal Z}_s^{\cal D}>z_S\}}+\left(\tilde{u}\left(\frac{{\cal Z}_s^{\cal D}}{\k_2} \right)+\e_2 {\cal Z}_s^{\cal D}\right){\bf 1}_{\{{\cal Z}_s^{\cal D}\le z_S\}}\right\}ds \right]. \nonumber
\end{align}

Since ${\cal Q}'(z) \le 0$, we have
\begin{equation*}
	e^{-\d (\tau_n \wedge t )}\left({\cal Q}({\cal Z}_{(\tau_n \wedge t)-}^{\cal D})\right)_+ \le \left({\cal Q}(z_R)\right)_+ <\infty,
\end{equation*}
where we have used fact that $z_R \le {\cal Z}_{(\tau_n\wedge t)-}^{\cal D} \;\;\;\mbox{for all}\;\;\;t\ge 0. $

Thus,  Fatou's lemma implies that 
\begin{equation}\label{eq:1-3}
	\limsup_{n\uparrow \infty}\mathbb{E}\left[e^{-\d(\tau_n \wedge t)}{\cal Q}({\cal Z}_{(\tau_n\wedge t)-}^{\cal D})\right] \le \mathbb{E}\left[ e^{-\d (\tau_{z_R}\wedge t)}{\cal Q}({\cal Z}_{(\tau_{z_R}\wedge t) -}^{\cal D})\right].
\end{equation}

Note that 
\begin{equation}
	e^{-\d (\tau_{z_R}\wedge t)}{\cal Q}({\cal Z}_{(\tau_{z_R}\wedge t)-}^{\cal D})=e^{-\d t}{\cal Q}({\cal Z}_{t-}^{\cal D}){\bf 1}_{\{t<\tau_{z_R} \}} + e^{-\d \tau_{z_R}}J_R({\cal Z}_{\tau_{z_R}-}^{\cal D}){\bf 1}_{\{t\ge \tau_{z_R} \}}.
\end{equation}

Since ${\cal Q}({\cal Z}_t^{\cal D})\le {\cal Q}(z_R)$ for $t\in[0,\tau_{z_R})$ and $J_R({\cal Z}_{\tau_{z_R}-}^{\cal D})\le J_R(z_R)$, it follows from Fatou's lemma that 
\begin{align}
\label{eq:1-4}
&\limsup_{t\uparrow \infty}\mathbb{E}\left[e^{-\d (\tau_{z_R}\wedge t)}{\cal Q}({\cal Z}_{(\tau_{z_R}\wedge t)- }^{\cal D})\right]\\\le& \limsup_{t\uparrow \infty}\mathbb{E}\left[e^{-\d t}{\cal Q}({\cal Z}_{t-}^{\cal D}){\bf 1}_{\{t<\tau_{z_R} \}} \right]+\limsup_{t\uparrow\infty}\mathbb{E}\left[ e^{-\d \tau_{z_R}}J_R({\cal Z}_{\tau_{z_R}}){\bf 1}_{\{t\ge \tau_{z_R} \}}\right]\nonumber\\
\le&\limsup_{t\uparrow \infty}\mathbb{E}\left[e^{-\d t}{\cal Q}(z_R){\bf 1}_{\{t<\tau_{z_R} \}} \right]+\limsup_{t\uparrow\infty}\mathbb{E}\left[ e^{-\d \tau_{z_R}}J_R({\cal Z}_{\tau_{z_R}}){\bf 1}_{\{t\ge \tau_{z_R} \}}\right]\nonumber\\
\le&\mathbb{E}\left[ e^{-\d \tau_{z_R}}J_R({\cal Z}_{\tau_{z_R}}){\bf 1}_{\{\tau_{z_R}<\infty \}}\right].\nonumber
\end{align}

By \eqref{eq:1-1}, \eqref{eq:1-2}, \eqref{eq:1-3}, and \eqref{eq:1-4}, we conclude that 
\begin{equation}
{\cal Q}(z) \le {\cal J}_0 ( z; {\cal D}, \tau_{z_R}) 
\end{equation}
for any ${\cal D}\in{\cal NI}$. \\

\noindent{\bf (b)}  Let ${\cal D}_t^{z_B}$ be as in the statement of theorem.  For each $n\in\mathbb{N}$ define the stopping time $\tau_n$ as 
$$
\tau_n:= \inf\left\{t>0\mid {\cal Z}_t^{{\cal D}^{z_B}} < \frac{1}{n} \right\} \wedge \tau 
$$
for an arbitrary stopping time $\tau\in{\cal S}$.

From the definition of ${\cal D}^{z_B}$, it is easy to see that ${\cal D}_t^{z_B}$ is continuous, 
\begin{equation}
	\int_0^t e^{-\d s} {\cal Q}'({\cal Z}_s^{{\cal D}^{z_B}})d{\cal D}_s^{z_B} =0 \;\;\;\mbox{for}\;\;t\ge 0,
\end{equation}
and 
	\begin{eqnarray}
		\begin{split}
			\begin{cases}
				&\dfrac{\t^2}{2}({\cal Z}_t^{{\cal D}^{z_B}})^2 {\cal Q}''({\cal Z}_t^{{\cal D}^{z_B}})+(\d-r){\cal Z}_t^{{\cal D}^{z_B}}{\cal Q}'({\cal Z}_t^{{\cal D}^{z_B}})-\d {\cal Q}({\cal Z}_t^{{\cal D}^{z_B}}) + \tilde{u}({\cal Z}_t^{{\cal D}^{z_B}}) + h({\cal Z}_t^{{\cal D}^{z_B}})=0 \vspace{2mm}\\&\mbox{for all}\;\;t\in[0,\tau_{z_R}), \vspace{2mm}\\
				&\dfrac{\t^2}{2}({\cal Z}_t^{{\cal D}^{z_B}})^2 {\cal Q}''({\cal Z}_t^{{\cal D}^{z_B}})+(\d-r){\cal Z}_t^{{\cal D}^{z_B}}{\cal Q}'({\cal Z}_t^{{\cal D}^{z_B}})-\d {\cal Q}({\cal Z}_t^{{\cal D}^{z_B}}) + \tilde{u}({\cal Z}_t^{{\cal D}^{z_B}}) + h({\cal Z}_t^{{\cal D}^{z_B}})\le 0 \vspace{2mm}\\&\mbox{for all}\;\;t\in(\tau_{z_R},\infty).
			\end{cases}
		\end{split}
	\end{eqnarray}
 
This leads to
\begin{equation}\label{eq:inequality}
	\mathbb{E}\left[{\bf (A)}\right]=\mathbb{E}\left[{\bf (B)}\right]=\mathbb{E}\left[{\bf (C)}\right]=0\;\;\;\mbox{and}\;\;\;\mathbb{E}\left[{\bf (D)}\right]\le 0
\end{equation}
for ${\cal D}={\cal D}^{z_B}$ in \eqref{eq:ito}.

By utilizing the above localizing argument, it follow from \eqref{eq:inequality} that 
\begin{align}
	\label{eq:2-1}
{\cal Q}(z)\ge&\mathbb{E}\left[\int_0^{{\tau}_n\wedge t} e^{-\d s} \left\{\left(\tilde{u}\left(\frac{{\cal Z}_s^{{\cal D}^{z_B}}}{\k_1} \right)+\e_1 {\cal Z}_s^{{\cal D}^{z_B}}\right){\bf 1}_{\{{\cal Z}_s^{{\cal D}^{z_B}}>z_S\}}+\left(\tilde{u}\left(\frac{{\cal Z}_s^{{\cal D}^{z_B}}}{\k_2} \right)+\e_2 {\cal Z}_s^{{\cal D}^{z_B}}\right){\bf 1}_{\{{\cal Z}_s^{{\cal D}^{z_B}}\le z_S\}}\right\}ds\right.\\ +&\left.e^{-\d ({\tau}_n \wedge t)}{\cal Q}({\cal Z}_{{{\tau}_n}\wedge t}^{{\cal D}^{z_B}})\right].\nonumber
\end{align}

Note that ${\cal Z}_t^{{\cal D}^{z_B}} \le z_B$ for all $t\ge 0$. Thus, similarly to the derivation above in the part (a),  we have 
\begin{align}
\label{eq:2-2}
\lim_{t\uparrow \infty}\lim_{n\uparrow \infty}\;&\mathbb{E}\left[\int_0^{{\tau}_n\wedge t} e^{-\d s} \left\{\left(\tilde{u}\left(\frac{{\cal Z}_s^{{\cal D}^{z_B}}}{\k_1} \right)+\e_1 {\cal Z}_s^{{\cal D}^{z_B}}\right){\bf 1}_{\{{\cal Z}_s^{{\cal D}^{z_B}}>z_S\}}+\left(\tilde{u}\left(\frac{{\cal Z}_s^{{\cal D}^{z_B}}}{\k_2} \right)+\e_2 {\cal Z}_s^{{\cal D}^{z_B}}\right){\bf 1}_{\{{\cal Z}_s^{{\cal D}^{z_B}}\le z_S\}}\right\}ds\right]\;\;\\
=&\mathbb{E}\left[\int_0^{{\tau}} e^{-\d s} \left\{\left(\tilde{u}\left(\frac{{\cal Z}_s^{{\cal D}^{z_B}}}{\k_1} \right)+\e_1 {\cal Z}_s^{{\cal D}^{z_B}}\right){\bf 1}_{\{{\cal Z}_s^{{\cal D}^{z_B}}>z_S\}}+\left(\tilde{u}\left(\frac{{\cal Z}_s^{{\cal D}^{z_B}}}{\k_2} \right)+\e_2 {\cal Z}_s^{{\cal D}^{z_B}}\right){\bf 1}_{\{{\cal Z}_s^{{\cal D}^{z_B}}\le z_S\}}\right\}ds\right].\nonumber
\end{align}

Since ${\cal Q}({\cal Z}_{\tau_n\wedge t}^{{\cal D}^{z_B}}) \ge {\cal Q}(z_B)$ for all $t \ge 0$, Fatou's lemma implies that 
\begin{align*}
			\liminf_{t\to\infty}\liminf_{n\to\infty} \;\mathbb{E}\left[e^{-\d(\tau_n\wedge t)}{\cal Q}({\cal Z}_{\tau_n\wedge t})\right]&\ge \mathbb{E}\left[e^{-\d \tau}{\cal Q}({\cal Z}_{\tau}^{{\cal D}^{z_B}}){\bf 1}_{\{\tau < \infty \}}\right] +\liminf_{t\to\infty} \mathbb{E}\left[e^{-\d t} {\cal Q}({\cal Z}_t^{{\cal D}^{z_B}})\right] \\
	&\ge  \mathbb{E}\left[e^{-\d \tau}{\cal Q}({\cal Z}_{\tau}^{{\cal D}^{z_B}}){\bf 1}_{\{\tau < \infty \}}\right].
\end{align*}
Since ${\cal Q}(z) \ge J_R(z)$ for all $z>0$ in \eqref{eq:HJBQV}, we have 
\begin{equation}\label{eq:2-3}
		\liminf_{t\to\infty}\liminf_{n\to\infty} \;\mathbb{E}\left[e^{-\d(\tau_n\wedge t)}{\cal Q}({\cal Z}_{\tau_n\wedge t})\right]\ge  \mathbb{E}\left[e^{-\d \tau}J_R({\cal Z}_{\tau}^{{\cal D}^{z_B}}){\bf 1}_{\{\tau < \infty \}}\right].
\end{equation}

It follows from \eqref{eq:2-1}, \eqref{eq:2-2}, and \eqref{eq:2-3} that 
\begin{equation*}
	{\cal Q}(z) \ge {\cal J}_0(z; {\cal D}^{z_B}, \tau)
\end{equation*}
for any $\tau\in{\cal S}$.\\

\noindent {\bf (c)} 	For ${\cal D}={\cal D}^{z_B}$ and $\tau_n={\tau_R}$, 
\begin{equation}\label{eq:range_Z}
	z_R\le {\cal Z}_t^{{\cal D}^{z_B}}\le z_B \;\;\;t\in[0,\tau_R].
\end{equation}
By using the arguments in  parts (a) and (b) with \eqref{eq:range_Z}, it is easy to confirm that 
$$
\mathbb{E}[{\bf (A)}]=\mathbb{E}[{\bf (B)}]=\mathbb{E}[{\bf (C)}]=\mathbb{E}[{\bf (D)}]=0\;\;\mbox{in}\;\;\eqref{eq:ito}.
$$
That is, 
\begin{align*}
{\cal Q}(z)=&\mathbb{E}\left[\int_0^{{\tau}_{z_R}\wedge t} e^{-\d s} \left\{\left(\tilde{u}\left(\frac{{\cal Z}_s^{\cal D}}{\k_1} \right)+\e_1 {\cal Z}_s^{\cal D}\right){\bf 1}_{\{{\cal Z}_s^{\cal D}>z_S\}}+\left(\tilde{u}\left(\frac{{\cal Z}_s^{\cal D}}{\k_2} \right)+\e_2 {\cal Z}_s^{\cal D}\right){\bf 1}_{\{{\cal Z}_s^{\cal D}\le z_S\}}\right\}ds\right.\\ +&\left.e^{-\d ({\tau}_{z_R} \wedge t)}{\cal Q}({\cal Z}_{({{\tau}_{z_R}}\wedge t)}^{\cal D})\right].
\end{align*}

From \eqref{eq:range_Z}, we can easily get 
	\begin{align*}
		\mathbb{E}\left[\int_0^{{\tau}_{z_R}} e^{-\d s} \left|\left\{\left(\tilde{u}\left(\frac{{\cal Z}_s^{\cal D}}{\k_1} \right)+\e_1 {\cal Z}_s^{\cal D}\right){\bf 1}_{\{{\cal Z}_s^{\cal D}>z_S\}}+\left(\tilde{u}\left(\frac{{\cal Z}_s^{\cal D}}{\k_2} \right)+\e_2 {\cal Z}_s^{\cal D}\right){\bf 1}_{\{{\cal Z}_s^{\cal D}\le z_S\}}\right\}\right|ds\right] <\infty
	\end{align*}
and 
\begin{align*}
	\mathbb{E}\left[e^{-\d \tau_{z_R}} \left|J_R({\cal Z}_{\tau_{z_R}}^{{\cal D}^{z_B}})\right|\right]<\infty.
\end{align*} 

The {\it dominated convergence theorem} implies that 
	\begin{align*}
		{\cal Q}(z)=&\lim_{t\uparrow \infty}\mathbb{E}\left[\int_0^{{\tau}_{z_R}\wedge t} e^{-\d s} \left\{\left(\tilde{u}\left(\frac{{\cal Z}_s^{\cal D}}{\k_1} \right)+\e_1 {\cal Z}_s^{\cal D}\right){\bf 1}_{\{{\cal Z}_s^{\cal D}>z_S\}}+\left(\tilde{u}\left(\frac{{\cal Z}_s^{\cal D}}{\k_2} \right)+\e_2 {\cal Z}_s^{\cal D}\right){\bf 1}_{\{{\cal Z}_s^{\cal D}\le z_S\}}\right\}ds\right.\\ +&\left.e^{-\d ({\tau}_{z_R} \wedge t)}{\cal Q}({\cal Z}_{({{\tau}_{z_R}}\wedge t)}^{\cal D})\right]\\
		=&\mathbb{E}\left[\int_0^{{\tau}_{z_R}} e^{-\d s} \left\{\left(\tilde{u}\left(\frac{{\cal Z}_s^{\cal D}}{\k_1} \right)+\e_1 {\cal Z}_s^{\cal D}\right){\bf 1}_{\{{\cal Z}_s^{\cal D}>z_S\}}+\left(\tilde{u}\left(\frac{{\cal Z}_s^{\cal D}}{\k_2} \right)+\e_2 {\cal Z}_s^{\cal D}\right){\bf 1}_{\{{\cal Z}_s^{\cal D}\le z_S\}}\right\}ds \right.\\ +&\left.{\bf 1}_{\{\tau_{z_R}<\infty \}}e^{-\d {\tau}_{z_R}}J_R({\cal Z}_{{{\tau}_{z_R}}}^{\cal D})\right].
	\end{align*}
\end{proof}

\subsection{Verification II: Duality Theorem}

 Since  the pair $({\cal D}^{z_B}, \tau_{z_R})$ is a {\it Nash-equilibrium} of Problem \ref{pr:dual} (Theorem \ref{thm:NASH}), the dual value function $J(z)$ is given by 
\begin{equation*}
	\begin{split}
		J(y) ={\cal J}_0(y; {\cal D}^{z_B},\tau_{z_R})={\cal Q}(y).
	\end{split}
\end{equation*}

Hence, we can write the dual value function $J(y)$ in the explicit form:
\begin{eqnarray}
	\begin{split}\label{eq:J-sol-HJBQV}
		{ J}(y)=
		\begin{cases}
			J_R(y)\;\;\;&\mbox{for}\;\;y\le z_R,\\
			E_1 y^{n_1} + E_2 y^{n_2} + \phi_h(y)+J_R(y)\;\;\;&\mbox{for}\;\;z_R \le y \le z_B,\\
			E_1 y_B^{n_1} + E_2 y_B^{n_2} + \phi_h(z_B)+J_R(z_B)\;\;\;&\mbox{for}\;\;\;z_B \le  y,
		\end{cases}
	\end{split}
\end{eqnarray}
where 
	\begin{equation*}
		E_1 =-\dfrac{2}{\t^2(n_1-n_2)}\int_{z_R}^\infty \eta^{-n_1-1}h(\eta)d\eta\;\;\;\mbox{and}\;\;\;E_2 =-\dfrac{2}{\t^2(n_1-n_2)}\int_0^{z_R} \eta^{-n_2-1}h(\eta)d\eta.
	\end{equation*}

\begin{lem}\label{eq:equality-budet}
	For given $y>0$ and the Nash-equilibrium $({\cal D}^{z_B},\tau_{z_R})$ of Problem \ref{pr:dual}, the following relationship holds:
		\begin{eqnarray}
			\begin{split}
			-J'(y)=&\mathbb{E}\left[\int_0^{\tau_R}{\cal H}_t\left(\hat{c}({\cal Z}_t^{{\cal D}^{z_B}})-\hat{\e}({\cal Z}_t^{{\cal D}^{z_B}})\right)dt +{\bf 1}_{\{\tau_{z_R}<\infty \}}{\cal H}_{\tau_R}{\cal X}_{R}({\cal Z}_{\tau_R}^{{\cal D}^{z_B}}) \right]\\
			=&\mathbb{E}\left[\int_0^{\tau_R}{\cal H}_t{\cal D}_t^{z_B}\left(\hat{c}({\cal Z}_t^{{\cal D}^{z_B}})-\hat{\e}({\cal Z}_t^{{\cal D}^{z_B}})\right)dt +{\bf 1}_{\{\tau_{z_R}<\infty \}}{\cal H}_{\tau_R}{\cal D}_{\tau_R}^{z_B}{\cal X}_{R}({\cal Z}_{\tau_R}^{{\cal D}^{z_B}}) \right].
			\end{split}
		\end{eqnarray}
Recall that  ${\cal X}_R(y)=-J_R'(y)$ (see Appendix \ref{sec:after-dual}), 
\begin{footnotesize}
	\begin{align*}
		\hat{c}(z) =& \frac{1}{\k_1}I\left(\frac{z}{\k_1}\right){\bf 1}_{\{z>z_S \}} + \frac{1}{\k_2}I\left(\frac{z}{\k_2}\right){\bf 1}_{\{z\le z_S \}},\;\;\;\mbox{and}\;\;\;
		\hat{\e}(z)=\e_1 {\bf 1}_{\{z>z_S \}} + \e_2{\bf 1}_{\{z\le z_S \}}.
	\end{align*}
\end{footnotesize}
\end{lem}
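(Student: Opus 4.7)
The plan is to introduce $\mathcal{X}(z):=-J'(z)$, derive a linear ODE for it on the working interval $(z_R,z_B)$, and apply It\^o's formula to the two processes $\mathcal{H}_t\mathcal{X}(\mathcal{Z}_t^{\mathcal{D}^{z_B}})$ and $\mathcal{H}_t\mathcal{D}_t^{z_B}\mathcal{X}(\mathcal{Z}_t^{\mathcal{D}^{z_B}})$ along the Nash equilibrium $(\mathcal{D}^{z_B},\tau_{z_R})$ of Problem~\ref{pr:dual}. The two It\^o expansions will produce the two probabilistic representations of $-J'(y)$ asserted in the lemma.

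First, I differentiate the HJB identity $\mathcal{L}J+\tilde u+h=0$ in $z$ on $(z_R,z_B)$. Using $\tilde u'=-I$ together with the piecewise computation $h'=\hat\epsilon-\hat c+I$ (immediate from the definitions of $h,\hat\epsilon,\hat c$), one obtains
$$\tfrac{\theta^2}{2}z^2\mathcal{X}''(z)+(\theta^2+\delta-r)z\mathcal{X}'(z)-r\mathcal{X}(z)=\hat\epsilon(z)-\hat c(z),\qquad z\in(z_R,z_B).$$
The smooth- and super-contact conditions \eqref{eq:smooth-pasting}, combined with $J\equiv J(z_B)$ on $[z_B,\infty)$, translate to the boundary data $\mathcal{X}(z_R)=\mathcal{X}_R(z_R)$, $\mathcal{X}(z_B)=0$, and $\mathcal{X}'(z_B)=0$.

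For the first identity, I expand $d\bigl(\mathcal{H}_t\mathcal{X}(\mathcal{Z}_t^{\mathcal{D}^{z_B}})\bigr)$ by It\^o, using $d\mathcal{H}_t=-\mathcal{H}_t(r\,dt+\theta\,dW_t)$, the dynamics \eqref{eq:dynamics-Z} with $\mathcal{D}^{z_B}$ continuous, and the cross-variation $d\mathcal{H}_t\,d\mathcal{X}(\mathcal{Z}_t)=\theta^2\mathcal{H}_t\mathcal{Z}_t\mathcal{X}'(\mathcal{Z}_t)\,dt$; the ODE above collapses the absolutely continuous drift to $\mathcal{H}_t(\hat\epsilon-\hat c)(\mathcal{Z}_t)\,dt$, while the singular term $\mathcal{H}_t\mathcal{Z}_t\mathcal{X}'(\mathcal{Z}_t)\,d\mathcal{D}_t^{z_B}/\mathcal{D}_t^{z_B}$ lives on $\{\mathcal{Z}_t=z_B\}$ and vanishes by $\mathcal{X}'(z_B)=0$. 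Localizing with $T_n:=n\wedge\inf\{t:\mathcal{Z}_t^{\mathcal{D}^{z_B}}\le 1/n\}$, integrating up to $\tau_{z_R}\wedge T_n$, and taking expectations removes the martingale; on $[0,\tau_{z_R})$ the state lies in $[z_R,z_B]$, so $\hat c,\hat\epsilon,\mathcal{X}$ are bounded there and, together with $\mathbb{E}\int_0^\infty\mathcal{H}_t\,dt=1/r$ and $\mathcal{H}_n\to 0$ in $L^1$, dominated convergence passes $n\to\infty$. Using $\mathcal{X}(\mathcal{Z}_{\tau_{z_R}-}^{\mathcal{D}^{z_B}})=\mathcal{X}_R(z_R)$ then produces the first representation for $y\in[z_R,z_B]$; the case $y<z_R$ is immediate since $\tau_{z_R}=0$, and $y>z_B$ reduces to a boundary check where $\mathcal{Z}_0=z_B$ forces both sides to vanish via $\mathcal{X}(z_B)=0$.

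For the second identity, apply It\^o to $\mathcal{H}_t\mathcal{D}_t^{z_B}\mathcal{X}(\mathcal{Z}_t^{\mathcal{D}^{z_B}})$. The absolutely continuous drift now picks up the factor $\mathcal{D}_t^{z_B}$, while the singular contribution becomes $\mathcal{H}_t\bigl[\mathcal{X}(\mathcal{Z}_t)+\mathcal{Z}_t\mathcal{X}'(\mathcal{Z}_t)\bigr]\,d\mathcal{D}_t^{z_B}$ supported on $\{\mathcal{Z}_t=z_B\}$, which vanishes because \emph{both} $\mathcal{X}(z_B)=0$ and $\mathcal{X}'(z_B)=0$. Repeating the localization/convergence argument (with the additional estimate $\mathcal{D}_t^{z_B}\le 1$) delivers the second representation. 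The crux of the proof is precisely the disappearance of these two singular (local-time) terms: both the smooth-pasting condition $J'(z_B)=0$ and the super-contact condition $J''(z_B)=0$ enter essentially, and without the latter the second expression would carry an extra local-time integral at $z_B$. The convergence arguments themselves are routine since the state is trapped in the compact interval $[z_R,z_B]$ throughout $[0,\tau_{z_R})$.
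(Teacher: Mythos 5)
Your proposal is correct and follows essentially the paper's own argument: your process ${\cal H}_t{\cal D}_t^{z_B}{\cal X}({\cal Z}_t^{{\cal D}^{z_B}})$ coincides (up to the constant factor $1/y$) with the paper's $e^{-\d t}{\cal Q}_1({\cal Z}_t^{{\cal D}^{z_B}})$ for ${\cal Q}_1(z)=-zJ'(z)$, and you use the same ingredients --- the differentiated HJB identity, the vanishing of the singular ($d{\cal D}^{z_B}$) terms at $z_B$ via $J'(z_B)=J''(z_B)=0$, the value-matching ${\cal X}(z_R)={\cal X}_R(z_R)$, localization, and boundedness of the state in $[z_R,z_B]$ with dominated convergence. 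The only cosmetic difference is in the unweighted representation: the paper obtains it by a Girsanov change to the risk-neutral measure with discounting $e^{-rt}$, whereas you stay under the physical measure and carry the weight ${\cal H}_t$ explicitly in the It\^{o} expansion, which is the same computation.
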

\begin{proof}

First, we will show that 	
\begin{equation*}
-J'(y)=\mathbb{E}\left[\int_0^{\tau_R}{\cal H}_t{\cal D}_t^{z_B}\left(\hat{c}({\cal Z}_t^{{\cal D}^{z_B}})-\hat{\e}({\cal Z}_t^{{\cal D}^{z_B}})\right)dt +{\bf 1}_{\{\tau_{z_R}<\infty \}}{\cal H}_{\tau_R}{\cal D}_{\tau_R}^{z_B}{\cal X}_{R}({\cal Z}_{\tau_R}^{{\cal D}^{z_B}}) \right].
\end{equation*}
	Let us temporarily denote ${\cal Q}_1(y)$ by 
	\begin{equation*}
		{\cal Q}_1(y) = - yJ'(y).
	\end{equation*}

Define a process ${\cal N}$ by 
\begin{equation}
	{\cal N}_t=\int_0^t e^{-\d s}\left({\cal Z}_s^{{\cal D}^{z_B}} \hat{c}({\cal Z}_s^{{\cal D}^{z_B}})-{\cal Z}_s^{{\cal D}^{z_B}} \hat{\e}({\cal Z}_s^{{\cal D}^{z_B}})\right)ds +e^{-\d t }{\cal Q}_1({\cal Z}_t^{{\cal D}^{z_B}}).
\end{equation}

Since ${\cal X}_R(y)=-J_R'(y)$ is $C^2$, it is easy to confirm that ${\cal Q}_1$ is $C^2$ outside $\{z_R, z_B\}$.  Thus, we can apply  generalized It\^{o}'s lemma to ${\cal Q}_1({\cal Z}_t^{{\cal D}^{z_B}})$. Thus, we have 
	\begin{align*}
		d{\cal N}_t =& e^{-\d t}\left({\cal Z}_t^{{\cal D}^{z_B}} \hat{c}({\cal Z}_t^{{\cal D}^{z_B}})-{\cal Z}_t^{{\cal D}^{z_B}} \hat{\e}({\cal Z}_t^{{\cal D}^{z_B}})\right)dt+e^{-\d t}d {\cal Q}_1({\cal Z}_t^{{\cal D}^{z_B}})-\d e^{-\d t}{\cal Q}_1({\cal Z}_t^{{\cal D}^{z_B}})dt\\
	=&e^{-\d t}\left({\cal L}{\cal Q}_1({\cal Z}_t^{{\cal D}^{z_B}})+{\cal Z}_t^{{\cal D}^{z_B}} \hat{c}({\cal Z}_t^{{\cal D}^{z_B}})-{\cal Z}_t^{{\cal D}^{z_B}} \hat{\e}({\cal Z}_t^{{\cal D}^{z_B}})\right)dt+e^{-\d t}(-\t){\cal Z}_t^{{\cal D}^{z_B}}{\cal Q}_1'({\cal Z}_t^{{\cal D}^{z_B}})dW_t +{\cal Q}_1'({\cal Z}_t^{{\cal D}^{z_B}})d{\cal D}_t^{z_B}.
	\end{align*}
From this, 
	\begin{align}\label{eq:N1}
		{\cal N}_{\tau_{z_R}\wedge t}^1 =&{\cal Q}_1(z)+\int_0^{\tau_R\wedge t}\left({\cal L}{\cal Q}_1({\cal Z}_s^{{\cal D}^{z_B}})+{\cal Z}_s^{{\cal D}^{z_B}} \hat{c}({\cal Z}_s^{{\cal D}^{z_B}})-{\cal Z}_s^{{\cal D}^{z_B}} \hat{\e}({\cal Z}_s^{{\cal D}^{z_B}})\right)ds+{\cal M}_{\tau_{z_R}\wedge t}^1 \\+&\int_0^{\tau_{z_R}\wedge t}{\cal Q}_1'({\cal Z}_s^{{\cal D}^{z_B}})d{\cal D}_t^{z_B},\nonumber
	\end{align}
where 
$$
{\cal M}_t^1 := \int_0^t e^{-\d s}(-\t){\cal Z}_s^{{\cal D}^{z_B}}{\cal Q}_1'({\cal Z}_s^{{\cal D}^{z_B}})dW_s
$$

By  construction of $J(y)$ and the definition of ${\cal D}^{z_B}$, we deduce that 
\begin{equation*}
	\int_0^{\tau_{z_R}\wedge t}{\cal Q}_1'({\cal Z}_s^{{\cal D}^{z_B}})d{\cal D}_t^{z_B}=-\int_0^{\tau_{z_R}\wedge t}{\cal Z}_s^{{\cal D}^{z_B}} J''({\cal Z}_s^{{\cal D}^{z_B}})d{\cal D}_t^{z_B}=0.
\end{equation*}

Since ${\cal L}J({\cal Z}_s^{{\cal D}^{z_B}})+h({\cal Z}_s^{{\cal D}^{z_B}})+\tilde{u}({\cal Z}_s^{{\cal D}^{z_B}})=0$ for all $t\in[0,\tau_{z_R})$, we have 
$$
{\cal L}{\cal Q}_1({\cal Z}_s^{{\cal D}^{z_B}})+{\cal Z}_s^{{\cal D}^{z_B}} \hat{c}({\cal Z}_s^{{\cal D}^{z_B}})-{\cal Z}_s^{{\cal D}^{z_B}} \hat{\e}({\cal Z}_s^{{\cal D}^{z_B}})=0
$$
for  all $t\in[0,\tau_{z_R})$.

Moreover, it follows from $z_R \le {\cal Z}_t^{{\cal D}^{z_B}} \le z_B$ for $t\in[0,\tau_{z_R})$ that ${\cal M}_{\tau_{z_R}\wedge t}^1$ is a martingale. 

By taking expectation to the both sides of \eqref{eq:N1}, we derive that 
\begin{equation}
	{\cal Q}_1(z) =\mathbb{E}\left[\int_0^{\tau_{z_R}\wedge t}e^{-\d s}\left({\cal Z}_s^{{\cal D}^{z_B}} \hat{c}({\cal Z}_s^{{\cal D}^{z_B}})-{\cal Z}_s^{{\cal D}^{z_B}} \hat{\e}({\cal Z}_s^{{\cal D}^{z_B}})\right)ds+ e^{-\d (\tau_{z_R}\wedge t)}{\cal Q}_1({\cal Z}_{\tau_{z_R}\wedge t}^{{\cal D}^{z_B}})\right]
\end{equation}

From $z_R \le {\cal Z}_t^{{\cal D}^{z_B}} \le z_B$ for $t\in[0,\tau_{z_R})$, it is easy to show that 
	\begin{equation*}
\mathbb{E}\left[\int_0^{\tau_{z_R}} \left|e^{-\d t}\left({\cal Z}_t^{{\cal D}^{z_B}} \hat{c}({\cal Z}_t^{{\cal D}^{z_B}})-{\cal Z}_t^{{\cal D}^{z_B}} \hat{\e}({\cal Z}_t^{{\cal D}^{z_B}})\right)\right|ds\right]<\infty \;\;\;\mbox{and}\;\;\;\mathbb{E}\left[e^{-\d \tau_R}\left|{\cal Q}_1({\cal Z}_{\tau_{z_R}\wedge t}^{{\cal D}^{z_B}})\right|\right]<\infty.
	\end{equation*}
	
Since ${\cal Q}_1 (y) = -yJ_R'(y)=y{\cal X}_R(y)$ for $y\in(0,z_R]$, the {\it dominated convergence theorem} implies that 
\begin{equation}
	{\cal Q}_1(z) =\mathbb{E}\left[\int_0^{\tau_{z_R}}e^{-\d t}\left({\cal Z}_t^{{\cal D}^{z_B}} \hat{c}({\cal Z}_t^{{\cal D}^{z_B}})-{\cal Z}_t^{{\cal D}^{z_B}} \hat{\e}({\cal Z}_t^{{\cal D}^{z_B}})\right)dt+{\bf 1}_{\{\tau_{z_R}<\infty \}} e^{-\d \tau_{z_R}}{\cal Z}_{\tau_{z_R}}^{{\cal D}^{z_B}}{\cal X}_R({\cal Z}_{\tau_{z_R}}^{{\cal D}^{z_B}})\right].
\end{equation}
Since ${\cal Z}_t^{{\cal D}^{z_B}}=y e^{\d t}{\cal H}_t{\cal D}_t^{z_B}$, we have 

	\begin{align*}
-J'(y)=\mathbb{E}\left[\int_0^{\tau_R}{\cal H}_t{\cal D}_t^{z_B}\left(\hat{c}({\cal Z}_t^{{\cal D}^{z_B}})-\hat{\e}({\cal Z}_t^{{\cal D}^{z_B}})\right)dt +{\bf 1}_{\{\tau_{z_R}<\infty \}}{\cal H}_{\tau_R}{\cal D}_{\tau_R}^{z_B}{\cal X}_{R}({\cal Z}_{\tau_R}^{{\cal D}^{z_B}}) \right].
	\end{align*}

Next, we will show that 
\begin{equation*}
			-J'(y)=\mathbb{E}\left[\int_0^{\tau_R}{\cal H}_t\left(\hat{c}({\cal Z}_t^{{\cal D}^{z_B}})-\hat{\e}({\cal Z}_t^{{\cal D}^{z_B}})\right)dt +{\bf 1}_{\{\tau_{z_R}<\infty \}}{\cal H}_{\tau_R}{\cal X}_{R}({\cal Z}_{\tau_R}^{{\cal D}^{z_B}}) \right].
\end{equation*}
 For any fixed $T>0$, let us define an equivalent martingale measure $\mathbb{Q}$ by 
\begin{equation}
\dfrac{d\mathbb{Q}}{d\mathbb{P}}=e^{-\frac{1}{2}\t^2 T-\t W_T}.
\end{equation}
The Girsanov theorem implies that $W_t^{\mathbb{Q}}=W_t +\t t$ is a standard Brownian motion under the measure $\mathbb{Q}$. 

Note that 
\begin{equation}
\dfrac{d{\cal Z}_t^{{\cal D}^{z_B}}}{{\cal Z}_t^{{\cal D}^{z_B}}} =(\d-r+\t^2)dt -\t dW_t^{\mathbb{Q}} +\dfrac{d{\cal D}_t^{z_B}}{{\cal D}_t^{z_B}}
\end{equation}
under the measure $\mathbb{Q}$.

Then, similarly to the derivation above, we have 
	\begin{eqnarray}
		\begin{split}
				-J'(y) =& \mathbb{E}^{\mathbb{Q}}\left[\int_0^{\tau_{z_R}\wedge T}e^{-rt}\left(\hat{c}({\cal Z}_t^{{\cal D}^{z_B}})-\hat{\e}({\cal Z}_t^{{\cal D}^{z_B}})\right)dt + e^{-r(\tau_{z_R}\wedge T)}{\cal X}_R({\cal Z}_{\tau_{z_R}\wedge T}^{{\cal D}^{z_B}})\right]\\
				=& \mathbb{E}\left[\int_0^{\tau_{z_R}\wedge T}{\cal H}_t\left(\hat{c}({\cal Z}_t^{{\cal D}^{z_B}})-\hat{\e}({\cal Z}_t^{{\cal D}^{z_B}})\right)dt + {\cal H}_{\tau_{z_R}\wedge T}{\cal X}_R({\cal Z}_{\tau_{z_R}\wedge T}^{{\cal D}^{z_B}})\right].
		\end{split}
	\end{eqnarray}

By the {\it dominated convergence theorem}, we can obtain that 
	\begin{eqnarray}
		\begin{split}
			-J'(y)=\mathbb{E}\left[\int_0^{\tau_R}{\cal H}_t\left(\hat{c}({\cal Z}_t^{{\cal D}^{z_B}})-\hat{\e}({\cal Z}_t^{{\cal D}^{z_B}})\right)dt +{\bf 1}_{\{\tau_{z_R}<\infty \}}{\cal H}_{\tau_R}{\cal X}_{R}({\cal Z}_{\tau_R}^{{\cal D}^{z_B}}) \right].
		\end{split}
	\end{eqnarray}
\end{proof}

\begin{thm}\label{thm:main} Let $x>0$ be given. 
	\begin{itemize}
		\item[(a)] There exists a unique $y^*\in(0,z_B)$ such that 
		$$
		x = -J'(y^*).
		$$
		\item[(b)] Consider the Nash-equilibrium $(D^{*},\tau^{*})$ of $J(y^*)$ by 
		\begin{equation}
			D_t^{*} = \min\left\{1, \inf_{0\le s\le t}\dfrac{z_B}{{\cal Y}_s^*} \right\} \;\;\; t\ge0 
		\end{equation}
		and 
		\begin{equation}
			\tau^{*} = \inf\{t\ge 0 \mid {\cal Z}_t^{*} < z_R \},
		\end{equation}
		where ${\cal Z}_t^{*}={\cal Y}_t^*{\cal D}_t^*$ with ${\cal Y}_t^* = y^* e^{\d t} {\cal H}_t$. 
		
		Let  $c^*$, $\zeta^*$, and $X_{\tau^*}$ be the consumption, job state process, and wealth at time $\tau^*$ given by 	
			\begin{align*}
				c_t^* =
				\begin{cases}
					\hat{c}({\cal Z}_t^*)\;\;\;&\mbox{for}\;\; t\in[0,\tau^*), \vspace{2mm}\\
					I\left({\cal Y}_t^{R,*}\right)\;\;\;&\mbox{for}\;\; t\ge \tau^*,
				\end{cases}
			\;\;
				\zeta^* = \hat{\zeta}({\cal Z}_t^*)	=\begin{cases}
					\mathfrak{B}_1\;\;\;&\mbox{for}\;\;{\cal Z}_t^*>z_S, \vspace{2mm}\\
					\mathfrak{B}_2\;\;\;&\mbox{for}\;\;{\cal Z}_t^*\le z_S,
				\end{cases}
			\end{align*}
	and
		\begin{equation*}
			X_{\tau^*} = - J_R({\cal Z}_{\tau^*}^*)={\cal X}_R({\cal Z}_{\tau^*}^*), 
		\end{equation*}
respectively.  Here, ${\cal Y}_t^{R,*} = {\cal Z}_{\tau^*}^* e^{\d(t-\tau^*)}{\cal H}_t/{\cal H}_{\tau^*}$. Then, there exists a portfolio $\pi^*$ such that the strategy $(c^*,\pi^*, \zeta^*, \tau^*)\in{\cal A}(x)$.
		
		\item[(c)] $V(x)$ and $J(y)$ satisfy the duality relationship:
		\begin{equation}
			V(x) =\inf_{y>0}(J(y)+yx)\;\;\;\;\mbox{and}\;\;\;\;J(y)=\sup_{x>0}(V(x)-yx).
		\end{equation}
	Moreover, $(c^*,\pi^*,\zeta^*,\tau^*)$ is optimal.
	\end{itemize}
\end{thm}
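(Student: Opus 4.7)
The plan attacks the three claims using the identification $J=\mathcal{Q}$ (Theorem \ref{thm:NASH}) and the gradient identity of Lemma \ref{eq:equality-budet}, stitched through the Fenchel/Lagrange structure already set up. For \emph{part (a)}, I would invoke Proposition \ref{pro:properties-Q}(c,d): $J$ is $C^1$ and strictly convex on $(0,z_B)$ with $J'(0+)=-\infty$, $J'(z_B-)=0$, and $J'\equiv 0$ on $[z_B,\infty)$, so $-J'$ is a continuous strictly decreasing bijection of $(0,z_B)$ onto $(0,\infty)$, and the equation $x=-J'(y^*)$ has a unique root $y^*\in(0,z_B)$ for each $x>0$.

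For \emph{part (b)}, plugging $y^*$ into Lemma \ref{eq:equality-budet} delivers the static budget identity
\[
x=\mathbb{E}\!\left[\int_0^{\tau^*}\mathcal{H}_t\bigl(c_t^*-\hat{\epsilon}(\mathcal{Z}_t^*)\bigr)dt+\mathbf{1}_{\{\tau^*<\infty\}}\mathcal{H}_{\tau^*}X_{\tau^*}\right]
\]
with the prescribed $X_{\tau^*}:=\mathcal{X}_R(\mathcal{Z}_{\tau^*}^*)$; the parallel $\mathcal{D}^*$-weighted representation in the same lemma, combined with the fact that $\mathcal{D}^*$ decreases only on the candidate binding set $\{\mathcal{Y}_t^*\geq z_B\}$ of \eqref{eq:borrowing-constraint}, would be used to upgrade this identity to the sup-over-$\mathcal{NI}$ budget inequality \eqref{eq:static1}, equivalently \eqref{eq:static2}. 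Proposition \ref{pro:static-budget}(b) then furnishes $\pi^*$ so that $(c^*,\pi^*,\zeta^*,\tau^*)\in\mathcal{A}(x)$, and the wealth formula there keeps $X_t\geq 0$ throughout $[0,\tau^*)$.

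For \emph{part (c)}, I would evaluate the primal utility $U^*$ at the candidate. Post-retirement, the standard retirement duality gives $\mathbb{E}[\int_{\tau^*}^\infty e^{-\delta(t-\tau^*)}u(c_t^*)dt\mid\mathcal{F}_{\tau^*}]=J_R(\mathcal{Z}_{\tau^*}^*)+\mathcal{Z}_{\tau^*}^*\mathcal{X}_R(\mathcal{Z}_{\tau^*}^*)$. Pre-retirement, the pointwise Fenchel identity $u(\kappa_i\hat{c}(\mathcal{Z}_t^*))=\tilde{u}(\mathcal{Z}_t^*/\kappa_i)+\mathcal{Z}_t^*\hat{c}(\mathcal{Z}_t^*)$ lets me split $\mathcal{Z}_t^*\hat{c}=\mathcal{Z}_t^*(\hat{c}-\hat{\epsilon})+\mathcal{Z}_t^*\hat{\epsilon}$; then $e^{-\delta t}\mathcal{Z}_t^*=y^*\mathcal{H}_t\mathcal{D}_t^*$ together with Lemma \ref{eq:equality-budet} collapses the cross term to exactly $y^*x$, while the residual $\tilde{u}$- and $\hat{\epsilon}$-pieces reassemble into $\mathcal{J}_0(y^*;\mathcal{D}^*,\tau^*)=J(y^*)$ via Theorem \ref{thm:NASH}(c). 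Hence $U^*=J(y^*)+y^*x$, and combining with the weak duality \eqref{eq:weak-duality2} forces
\[
V(x)\leq\inf_{y>0}\bigl(J(y)+yx\bigr)\leq J(y^*)+y^*x=U^*\leq V(x),
\]
equality throughout, which proves $V(x)=\min_{y>0}(J(y)+yx)$ and the optimality of $(c^*,\pi^*,\zeta^*,\tau^*)$; the companion formula $J(y)=\sup_{x>0}(V(x)-yx)$ then follows by Fenchel involutivity on the convex $J$.

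The hardest step will be the ``upgrade'' inside part (b): not checking the single-point Lemma identity, but certifying the full sup-over-$\mathcal{NI}$ budget inequality, equivalently that the portfolio delivered by Proposition \ref{pro:static-budget}(b) genuinely keeps $X_t\geq 0$ pathwise. This is precisely where the singular control $\mathcal{D}^*$ earns its role as the El Karoui--Jeanblanc shadow price of \eqref{eq:borrowing-constraint}: it activates only where the constraint binds, and the two equivalent forms of $-J'(y^*)$ in Lemma \ref{eq:equality-budet} encode the complementary slackness that makes admissibility actually go through.
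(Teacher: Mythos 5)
Parts (a) and (c) of your plan coincide with the paper's proof: (a) is exactly Proposition \ref{pro:properties-Q}(c),(d) plus strict monotonicity of $-J'$ on $(0,z_B)$, and (c) is the same computation the paper performs, namely $y^*x=-y^*J'(y^*)=\mathbb{E}\bigl[\int_0^{\tau^*}e^{-\d t}{\cal Z}_t^*(c_t^*-\hat{\e}({\cal Z}_t^*))dt+\cdots\bigr]$, the Fenchel equalities $u(\k_i c_t^*)=\tilde{u}({\cal Z}_t^*/\k_i)+{\cal Z}_t^*c_t^*$ and $V_R=J_R+z{\cal X}_R$ at retirement, reassembly into ${\cal J}_0(y^*;{\cal D}^*,\tau^*)=J(y^*)$, and squeezing against the weak duality \eqref{eq:weak-duality2}.

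The genuine gap is in part (b), and it is exactly the step you flag as ``the hardest'' but then leave as a heuristic. Lemma \ref{eq:equality-budet} only gives the budget \emph{identity} at the single pair $({\cal D}^*,\tau^*)$ (equivalently at $\xi=\tau^*$); what Proposition \ref{pro:static-budget}(b) requires is the \emph{inequality} \eqref{eq:static2} uniformly over all stopping times $\xi\in{\cal S}_{\tau^*}$ (equivalently \eqref{eq:static1} over all ${\cal D}\in{\cal NI}$), and this does not follow from the identity plus the remark that ${\cal D}^*$ decreases only on $\{{\cal Y}^*_t\ge z_B\}$. Asserting ``complementary slackness'' here is circular: the statement that ${\cal D}^*$ acts only where the wealth implied by the candidate is zero is itself part of what must be proved, and making your shadow-price sketch rigorous would require constructing the candidate wealth process, proving it is nonnegative, and running a supermartingale/optional-sampling argument --- none of which is in the proposal. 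The paper closes this gap by a different, variational device: for an arbitrary $\xi\in{\cal S}_{\tau^*}$ it perturbs both the multiplier, $y^\e=y^*+\e$, and the control, ${\cal D}^\e_t=\bigl(y^*{\cal D}^*_t+\e{\bf 1}_{[0,\xi)}\bigr)/(y^*+\e)\in{\cal NI}$, uses the Nash-equilibrium property of $J(y^\e)$ together with the minimality of $y^*$ for $J(y)+yx$ to get ${\cal J}_0(y^*;{\cal D}^*,\tau^*)+y^*x\le{\cal J}_0(y^\e;{\cal D}^\e,\hat{\tau}^\e)+y^\e x$, and then divides by $\e$ and lets $\e\downarrow0$ (Fatou plus dominated convergence, using $\tilde{u}'=-I$ and $\hat{\tau}^\e\to\tau^*$) to obtain \eqref{eq:inequality_static} for every $\xi$, from which the sup-inequality and hence admissibility via Proposition \ref{pro:static-budget}(b) follow. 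Without either this perturbation argument or a fully executed supermartingale argument, your part (b) --- and therefore the admissibility of $(c^*,\pi^*,\zeta^*,\tau^*)$ used again in part (c) --- is not established.
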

\begin{proof}
\noindent{\bf  (a)} 
Since $J(y)=Q(y)$, it follows from (c) and (d) in Proposition \ref{pro:properties-Q} that (i) $J(y)$ is strictly convex in $y\in(0,z_B)$, (ii) $\lim_{y\to z_B-} J'(y)= 0$ and $\lim_{y\to0+}J'(y) =-\infty$. Thus, for given $x>0$,  there exists a unique $y^*>0$ such that 
$$
x = -J'(y^*). 
$$

\noindent{\bf  (b)}
For $y^*\in(0,z_B)$ which is a unique solution to $x=-J'(y)$, let ${\cal Y}_t^*, {\cal D}_t^*, {\cal Z}_t^*$, and $\tau^*$ be as in the statement of the theorem. 

Then, it follows from Lemma \ref{eq:equality-budet} that 
	\begin{align}
		x=&\mathbb{E}\left[\int_0^{\tau^*} {\cal H}_t(c_t^*-\e({\cal Z}_t^*))dt + {\bf 1}_{\{\tau^*<\infty \}}{\cal H}_{\tau^*}{\cal X}_R({\cal Z}_{\tau^*}^*)\right]\label{eq:equal-static1}\\
		=&\mathbb{E}\left[\int_0^{\tau^*}{\cal D}_t^* {\cal H}_t(c_t^*-\e({\cal Z}_t^*))dt + {\bf 1}_{\{\tau^*<\infty \}}{\cal D}_{\tau^*}^*{\cal H}_{\tau^*}{\cal X}_R({\cal Z}_{\tau^*}^*)\right]\label{eq:equal-static2}
	\end{align}
For given $\e>0$, define a process ${\cal D}^\e$ by 
\begin{equation}
	{\cal D}_t^\e := \dfrac{y^*{\cal D}_t^*+\e {\bf 1}_{[0,\xi)}}{y^* + \e}\;\;\;\mbox{for}\;\;t\ge 0,
\end{equation}
where $\xi$ is an arbitary stopping time belongs to ${\cal S}_{\tau^*}=\{\xi \in {\cal S} \mid 0\le \xi \le \tau^* \}$. 

Since ${\cal D}_0^\e =1$, it is clear that ${\cal D}^\e \in {\cal NI}$.  

Let us define $y^*$, ${\cal Y}_t^\e$, and ${\cal Z}_t^\e$ by 
$$
y^\e = y^* +\e,\;\;{\cal Y}_t^\e = y^\e e^{\d t} {\cal H}_t, \;\;\mbox{and}\;\;{\cal Z}_t^\e ={\cal Y}_t^\e {\cal D}_t^\e, 
$$
respectively. 

Note that 
\begin{align*}
	{\cal Z}_t^\e = 
	\begin{cases}
		{\cal Z}_t^* + \e e^{\d t} {\cal H}_t\;\;\;&\mbox{for}\;\;0\le t < \xi,\\
				{\cal Z}_t^* \;\;\;&\mbox{for}\;\;t\ge \xi.  
	\end{cases}
\end{align*}

Let $(\widehat{{\cal D}}^\e , \hat{\tau}^\e)$ be the Nash-equilibrium of $J(y^\e)$, i.e., 
\begin{equation}
	J(y^\e )= {\cal J}_0(y^\e;\widehat{{\cal D}}^\e , \hat{\tau}^\e ),
\end{equation}
where 
	\begin{align*}
		\widehat{D}_t^\e = \min\left\{1, \inf_{0\le s \le t}\dfrac{z_B}{{\cal Y}_t^\e} \right\}\;\;\;\mbox{and}\;\;\;\hat{\tau}^\e =\inf\{t\ge 0\mid {\cal Y}_t^\e \widehat{{\cal D}}_t^\e <z_R \}.
	\end{align*}
By the definition of Nash-equilibrium, 
\begin{equation*}
J_0(y^\e)+y^\e x =	{\cal J}_0 (y^\e , \widehat{{\cal D}}^\e, \hat{\tau}^\e)+y^\e x \le 	{\cal J}_0 (y^\e , {\cal D}^\e, \hat{\tau}^\e) +y^\e x.
\end{equation*}

Since $y^*$ is a unique minimizer of $J(y)+yx$, we deduce that 
\begin{eqnarray}
	\begin{split}\label{eq:a}
	{\cal J}_0(y^*; {\cal D}^*, \tau^*)+y^*x	=J(y^*) + y^* x \le J(y^\e ) +y^\e x \le 	{\cal J}_0 (y^\e , {\cal D}^\e , \hat{\tau}^\e)+y^\e x.
	\end{split}
\end{eqnarray}
Note that 
\begin{equation}\label{eq:b}
	{\cal J}_0(y^*; {\cal D}^*, \hat{\tau}^\e) +y^* x \le {\cal J}_0(y^*; {\cal D}^*, \tau^*) +y^*x, 
\end{equation}
where we have used the fact that $({\cal D}^*, \tau^*)$ is the Nash-equilibrium of $J(y^*)$. 

Define a process ${\cal N}^\e$ by 
	\begin{equation}
		{\cal N}^\e =\mathbb{E}\left[\int_0^{\hat{\tau}^\e}e^{-\d t }\left\{\left(\tilde{u}\left(\dfrac{{\cal Z}_t^*}{\k_1}\right)+\e_1 \right){\bf 1}_{\{{\cal Z}_t^\e\ge z_S \}}+\left(\tilde{u}\left(\dfrac{{\cal Z}_t^*}{\k_2}\right)+\e_2 \right){\bf 1}_{\{{\cal Z}_t^\e< z_S \}}\right\}dt + {\bf 1}_{\{\hat{\tau}^\e<\infty\}}J_R({\cal Z}_{\hat{\tau}^\e}^*) \right] +y^*x. 
	\end{equation}
By Lemma \ref{lem:z_S}, 
	\begin{align*}
		&\left(\tilde{u}\left(\dfrac{{\cal Z}_t^*}{\k_1}\right)+\e_1 \right){\bf 1}_{\{{\cal Z}_t^*\ge z_S \}}+\left(\tilde{u}\left(\dfrac{{\cal Z}_t^*}{\k_2}\right)+\e_2 \right){\bf 1}_{\{{\cal Z}_t^*< z_S \}}
		\\\ge&\left(\tilde{u}\left(\dfrac{{\cal Z}_t^*}{\k_1}\right)+\e_1 \right){\bf 1}_{\{{\cal Z}_t^\e\ge z_S \}}+\left(\tilde{u}\left(\dfrac{{\cal Z}_t^*}{\k_2}\right)+\e_2 \right){\bf 1}_{\{{\cal Z}_t^\e< z_S \}}.
	\end{align*}

This implies that 
\begin{equation}\label{eq:c}
	{\cal N}^\e \le {\cal J}_0(y^*; {\cal D}^*, \hat{\tau}^\e)+y^* x. 
\end{equation}

From \eqref{eq:a}, \eqref{eq:b}, and \eqref{eq:c}, we deduce that 
\begin{equation}
	{\cal N}^\e \le {\cal J}_0(y^\e ; {\cal D}^\e , \hat{\tau}^\e) +y^\e x.
\end{equation}

Hence, 
\begin{eqnarray}
	\begin{split}\label{eq:inquality-stopping}
				0\le &\mathbb{E}\left[\int_0^{\xi}e^{-\d t}\left\{\left(\dfrac{\tilde{u}(\frac{{\cal Z}_t^\e}{\k_1})-\tilde{u}(\frac{{\cal Z}_t^*}{\k_1})}{\e}+\e_1 e^{\d t}{\cal H}_t\right){\bf 1}_{\{{\cal Z}_t^\e \ge z_S \}} +\left(\dfrac{\tilde{u}(\frac{{\cal Z}_t^\e}{\k_2})-\tilde{u}(\frac{{\cal Z}_t^*}{\k_2})}{\e}+\e_2 e^{\d t}{\cal H}_t\right){\bf 1}_{\{{\cal Z}_t^\e < z_S \}}\right\} dt\right.\\ +&\left. {\bf 1}_{\{\hat{\tau}^\e \le \xi <\infty \}}\left(\dfrac{J_R({\cal Z}_{\hat{\tau}^\e-}^\e)-J_R({\cal Z}_{\hat{\tau}^\e}^*)}{\e}\right)\right]+x.
	\end{split}
\end{eqnarray}
By applying the {\it dominated convergence theorem}, it is easy to confirm that 
	\begin{align}
	\label{eq:dominated}
	\lim_{\e \downarrow 0} \mathbb{E}\left[\int_0^{\xi}{\cal H}_t\left(\e_1{\bf 1}_{\{{\cal Z}_t^\e \ge z_S \}}+\e_2{\bf 1}_{\{{\cal Z}_t^\e < z_S \}}\right) dt \right]=&\mathbb{E}\left[\int_0^{\xi}{\cal H}_t\left(\e_1{\bf 1}_{\{{\cal Z}_t^* \ge z_S \}}+\e_2{\bf 1}_{\{{\cal Z}_t^* < z_S \}}\right) dt \right]\\=&\mathbb{E}\left[\int_0^{\xi} {\cal H}_t\hat{e}({\cal Z}_t^*)dt \right]. \nonumber
	\end{align}
Note that 
$$
\tilde{u}(\frac{{\cal Z}_t^\e}{\k_1})-\tilde{u}(\frac{{\cal Z}_t^*}{\k_1})\le 0, \;\;\tilde{u}(\frac{{\cal Z}_t^\e}{\k_2})-\tilde{u}(\frac{{\cal Z}_t^*}{\k_2})\le 0,\;\;\mbox{and}\;\;J_R({\cal Z}_{\hat{\tau}^\e-}^\e)-J_R({\cal Z}_{\hat{\tau}^\e}^*)\le 0.
$$

Thus, it follows from Fatou's lemma and \eqref{eq:dominated} in \eqref{eq:inquality-stopping} that 
	\begin{align*}
	0\le -\mathbb{E}\left[\int_0^{\xi}{\cal H}_t (c_t^*-\hat{\e}({\cal Z}_t^*))dt +{\bf 1}_{\{\xi=\tau^*<\infty \}}{\cal H}_{\tau^*} {\cal X}_R({\cal Z}_{\tau^*}^*)\right]+x, 
	\end{align*}
where we have used tha fact that $\lim_{\e \downarrow 0}\hat{\tau}^\e =\tau^*$ and $0\le \xi \le \tau^*$.

Therefore, we get 
\begin{equation}\label{eq:inequality_static}
	\mathbb{E}\left[\int_0^{\xi}{\cal H}_t (c_t^*-\hat{\e}({\cal Z}_t^*))dt +{\bf 1}_{\{\xi=\tau^*<\infty \}}{\cal H}_{\tau^*} {\cal X}_R({\cal Z}_{\tau^*}^*)\right]\le x. 
\end{equation}
Since  inequality \eqref{eq:inequality_static} holds for any stopping time $\xi\in{\cal S}_{\tau^*}$, 
\begin{equation}
\sup_{\xi\in {\cal S}_{\tau^*}}\mathbb{E}\left[\int_0^{\xi}{\cal H}_t (c_t^*-\hat{\e}({\cal Z}_t^*))dt +{\bf 1}_{\{\xi=\tau^*<\infty \}}{\cal H}_{\tau^*} {\cal X}_R({\cal Z}_{\tau^*}^*)\right]\le x
\end{equation}
By Proposition \ref{pro:static-budget} (b), there exists a portfolio $\pi_t^*$ such that 
$$
(c^*, \pi^*, \zeta^*, \tau^*)\in{\cal A}(x)
$$
with $X_{\tau^*} = {\cal X}_R({\cal Z}_{\tau^*}^*)$\\. 
 
\noindent{\bf  (c)} By Lemma \ref{eq:equality-budet}, we have 
\begin{align*}
		x=-J'(y^*)=&\mathbb{E}\left[\int_0^{\tau^*}{\cal H}_t\left(c_t^*-\hat{\e}({\cal Z}_t^{*})\right)dt +{\bf 1}_{\{\tau^{*}<\infty \}}{\cal H}_{\tau^*}{\cal X}_{R}({\cal Z}_{\tau^*}^{*}) \right]\\
	=&\mathbb{E}\left[\int_0^{\tau^*}{\cal H}_t{\cal D}_t^{*}\left(c_t^*-\hat{\e}({\cal Z}_t^{*})\right)dt +{\bf 1}_{\{\tau^{*}<\infty \}}{\cal H}_{\tau^*}{\cal D}_{\tau^*}^{*}{\cal X}_{R}({\cal Z}_{\tau^*}^{*}) \right].
\end{align*}

Note that 
	\begin{align*}
		x=&\mathbb{E}\left[\int_0^{\tau^*}{\cal H}_t\left(c_t^*-\hat{\e}({\cal Z}_t^{*})\right)dt +{\bf 1}_{\{\tau^{*}<\infty \}}{\cal H}_{\tau^*}{\cal X}_{R}({\cal Z}_{\tau^*}^{*}) \right]\\
		\le&\sup_{\xi\in {\cal S}_{\tau^*}}\mathbb{E}\left[\int_0^{\xi}{\cal H}_t (c_t^*-\hat{\e}({\cal Z}_t^*))dt +{\bf 1}_{\{\xi=\tau^*<\infty \}}{\cal H}_{\tau^*} {\cal X}_R({\cal Z}_{\tau^*}^*)\right]\le x.
	\end{align*}
That is,
	\begin{equation*}
		x=\sup_{\xi\in {\cal S}_{\tau^*}}\mathbb{E}\left[\int_0^{\xi}{\cal H}_t (c_t^*-\hat{\e}({\cal Z}_t^*))dt +{\bf 1}_{\{\xi=\tau^*<\infty \}}{\cal H}_{\tau^*} {\cal X}_R({\cal Z}_{\tau^*}^*)\right].
	\end{equation*}

On the other hand, 
	\begin{align*}
		xy^*=-y^* J'(y^*) =&\mathbb{E}\left[\int_0^{\tau^*}y^*{\cal H}_t{\cal D}_t^{*}\left(c_t^*-\hat{\e}({\cal Z}_t^{*})\right)dt +{\bf 1}_{\{\tau^{*}<\infty \}}y^*{\cal H}_{\tau^*}{\cal D}_{\tau^*}^{*}{\cal X}_{R}({\cal Z}_{\tau^*}^{*}) \right]\\
		=&\mathbb{E}\left[\int_0^{\tau^*}e^{-\d t}{\cal Z}_t^{*}\left(c_t^*-\hat{\e}({\cal Z}_t^{*})\right)dt +{\bf 1}_{\{\tau^{*}<\infty \}}e^{-\d \tau^*}{\cal Z}_{\tau^*}^{*}{\cal X}_R({\cal Z}_{\tau^*}^{*})\right]\\
		=&\mathbb{E}\left[\int_0^{\tau^*}e^{-\d t}\left(u(\k_1 c_t^*){\bf 1}_{\{\zeta_t^*=\mathfrak{B}_1 \}}+u(\k_2 c_t^*){\bf 1}_{\{\zeta_t^*=\mathfrak{B}_2 \}}\right)dt + {{\bf 1}_{\{\tau^*<\infty \}}}\int_{\tau^*}^\infty e^{-\d t}u(c_t^{*})dt \right]-J(y^*).
	\end{align*}
Since $J(y^*)+y^*x \ge \inf_{y>0}\left(J(y)+yx\right)$, the weak duality in \eqref{eq:weak-duality2} implies that  
	\begin{align*}
		&\mathbb{E}\left[\int_0^{\tau^*}e^{-\d t}\left(u(\k_1 c_t^*){\bf 1}_{\{\zeta_t^*=\mathfrak{B}_1 \}}+u(\k_2 c_t^*){\bf 1}_{\{\zeta_t^*=\mathfrak{B}_2 \}}\right)dt + {{\bf 1}_{\{\tau^*<\infty \}}}\int_{\tau^*}^\infty e^{-\d t}u(c_t^{*})dt \right]\\
		\ge &\sup_{(c,\pi, \d, \tau)\in \mathcal{A}(x)}\mathbb{E}\left[\int_0^{\tau}e^{-\d t}\left(u(\k_1 c_t){\bf 1}_{\{\zeta_t=\mathfrak{B}_1 \}}+u(\k_2 c_t){\bf 1}_{\{\zeta_t=\mathfrak{B}_2 \}}\right)dt +{{\bf 1}_{\{\tau<\infty \}}} \int_{\tau}^\infty e^{-\d t}u(c_t)dt \right].
	\end{align*}
Therefore, it follows from $(c^*, \pi^*, \zeta^*, \tau^*)\in{\cal A}(x)$ that 
\begin{equation*}
	V(x)= \mathbb{E}\left[\int_0^{\tau^*}e^{-\d t}\left(u(\k_1 c_t^*){\bf 1}_{\{\zeta_t^*=\mathfrak{B}_1 \}}+u(\k_2 c_t^*){\bf 1}_{\{\zeta_t^*=\mathfrak{B}_2 \}}\right)dt + {{\bf 1}_{\{\tau^*<\infty \}}}\int_{\tau^*}^\infty e^{-\d t}u(c_t^{*})dt \right].
\end{equation*}
That is, 
$$
V(x)=\inf_{y>0}\left(J(y)+yx\right),\;\;J(y)=\sup_{x>0}\left(V(x)-yx\right)
$$
and
$(c^*, \pi^*, \zeta^*, \tau^*)\in{\cal A}(x)$ is {\it optimal}. 
\end{proof}

\begin{rem}
	The agent stays at $\mathfrak{B}_2$ or switches from $\mathfrak{B}_1$  to $\mathfrak{B}_2,$ a job with higher satisfaction before retirement if $z_R<z_S<z_B$. If $z_S\le z_R$ or $z_S\ge z_B$ the agent stays always at $\mathfrak{B}_1$ or $\mathfrak{B}_2$, respectively, before retirement.
\end{rem}

\section{Conclusion}\label{sec:conclusion}

We have studied the optimization problem of the job choice, retirement, consumption and portfolio selection of a borrowing constrained agent. We have derived a solution in concrete form by transforming the problem into a dual two-person zero-sum game. In this paper,
we assume a constant investment opportunity, and consideration of  a general market environment would be an interesting topic for future research.  

\bibliographystyle{apalike}
\bibliography{library}

\begin{thebibliography}{}

\bibitem[Bayraktar and Young, 2011]{BY11}
Bayraktar, E. and Young, V.~R. (2011).
\newblock {Proving regularity of the minimal probability of ruin via a game of
  stopping and control}.
\newblock {\em Finance Stoch.}, 15:785–818.

\bibitem[Choi et~al., 2007]{CSS2007}
Choi, K., Shim, G., and Shin, Y. (2007).
\newblock { Optimal Portfolio, Consumption-Leisure and Retirement Choice
  Problem with CES Utility}.
\newblock {\em Mathematical Finance}, 18(3):445--472.

\bibitem[Cox and Huang, 1989]{CoxH}
Cox, J. and Huang, C. (1989).
\newblock {Optimal Consumption and Portfolio Polices when Asset Prices Follow a
  Diffusion Process}.
\newblock {\em Journal of Economic Theory}, 49(1):33--83.

\bibitem[Dixit and Pindyck, 1994]{DixitPindyck}
Dixit, A. and Pindyck, R. (1994).
\newblock {\em Investment under Uncertainty}.
\newblock Princeton University Press.

\bibitem[Domeij and Flod\'{e}n, 2006]{DF2006}
Domeij, D. and Flod\'{e}n, M. (2006).
\newblock { The labor-supply elasticity and borrowing constraints: Why
  estimates are biased}.
\newblock {\em Review of Economic Dynamics}, 9(4):242--262.

\bibitem[Dumas, 1989]{Dumas1989}
Dumas, B. (1989).
\newblock {Two-person dynamic equilibrium in the capital market}.
\newblock {\em Review of Financial Studies}, 2:157--188.

\bibitem[Dybvig and Liu, 2010]{DL2010}
Dybvig, P. and Liu, H. (2010).
\newblock {Lifetime consumption and investment: Retirement and constrained
  Borrowing}.
\newblock {\em Journal of Economic Theory}, 145(3):885--907.

\bibitem[Dybvig and Liu, 2011]{DL2011}
Dybvig, P. and Liu, H. (2011).
\newblock {Verification Theorems for Models of Optimal Consumption and
  Investment}.
\newblock {\em Mathematics of Operations Research}, 36(4):620--635.

\bibitem[El\;Karoui and Jeanblanc-Picqu\'{e}, 1998]{KarouiJeanblanc1998}
El\;Karoui, N. and Jeanblanc-Picqu\'{e}, M. (1998).
\newblock {Optimization of Consumption with Labor Income}.
\newblock {\em Finance and Stochastic}, 2(4):409--440.

\bibitem[Farhi and Panageas, 2007]{FP2007}
Farhi, E. and Panageas, S. (2007).
\newblock {Saving and investing for early retirement: A theoretical analysis}.
\newblock {\em Journal of Financial Economics}, 83(1):87--121.

\bibitem[Grossman and Laroque, 1990]{GL}
Grossman, S. and Laroque, G. (1990).
\newblock {Asset Pricing and Optimal Portfolio Choice in the Presence of
  Illiquid Durable Consumption Goods}.
\newblock {\em Econometrica}, 58:25--51.

\bibitem[Hamad\'{e}ne, 2006]{H06}
Hamad\'{e}ne, S. (2006).
\newblock {Mixed zero-sum stochastic differential game and American game
  options}.
\newblock {\em SIAM J. Control Optim.}, 45:496–518.

\bibitem[Harrison, 1985]{Harrison}
Harrison, M. (1985).
\newblock {\em { Brownian Motion and Stochastic Flow Systems}}.
\newblock Wiley.

\bibitem[He and Pag\'es, 1993]{HP}
He, H. and Pag\'es, H. (1993).
\newblock {Labor Income, Borrowing Constraints, and Equilibrium Asset Prices}.
\newblock {\em Economic Theory}, 3(4):663--696.

\bibitem[Hern\'andez-Hern\'andez et~al., 2015]{HSZ15}
Hern\'andez-Hern\'andez, D., Simon, R., and Zervos, M. (2015).
\newblock {A zero-sum game between a singular stochastic controller and a
  discretionary stopper}.
\newblock {\em Ann. Appl. Probab}, 25(1):46--80.

\bibitem[Hern\'andez-Hern\'andez and Yamazaki, 2015]{HY15}
Hern\'andez-Hern\'andez, D. and Yamazaki, K. (2015).
\newblock {Games of singular control and stopping driven by spectrally
  one-sided L\'evy processes}.
\newblock {\em Stochastic Processes and their Applications}, 125:1–38.

\bibitem[Karatzas et~al., 1987]{KLS}
Karatzas, I., Lehoczky, J., and Shreve, S. (1987).
\newblock {Optimal Portfolio and Consumption Decisions for a "Small Investor"
  on a Finite Horizon}.
\newblock {\em SIAM Journal on Control and Optimization}, 25(6):1557--1586.

\bibitem[Karatzas and Shreve, 1991]{KS2}
Karatzas, I. and Shreve, S. (1991).
\newblock {\em Brownian Motion and Stochastic Calculus}.
\newblock Springer-Verlag New York.

\bibitem[Karatzas and Shreve, 1998]{KS}
Karatzas, I. and Shreve, S. (1998).
\newblock {\em Methods of Mathematical Finance}.
\newblock Springer-Verlag New York.

\bibitem[Karatzas and Wang, 2000]{KW2000}
Karatzas, I. and Wang, H. (2000).
\newblock {Utility Maximization with Discretionary Stopping}.
\newblock {\em SIAM Journal on Control and Optimization}, 39(1):306--329.

\bibitem[Karatzas and Zamfirescu, 2008]{KZ08}
Karatzas, I. and Zamfirescu, I.-M. (2008).
\newblock {Martingale approach to stochastic differential games of control and
  stopping}.
\newblock {\em Ann. Probab.}, 36:1495–1527.

\bibitem[Knudsen et~al., 1998]{KMZ}
Knudsen, T., Meister, B., and Zervos, M. (1998).
\newblock {Valuation of investments in real assets with implications for the
  stock prices}.
\newblock {\em SIAM J. Control and Optim.}, 36:2082--2102.

\bibitem[Lee et~al., 2019]{Lee-et-al-2019}
Lee, H., Shim, G., and Shin, Y. (2019).
\newblock {Borrowing Constraints, Effective Flexibility in Labor Supply, and
  Portfolio Selection}.
\newblock {\em Mathematics and Financial Economics}, 13(2):173--208.

\bibitem[Lim and Shin, 2011]{LS2011}
Lim, B. and Shin, Y. (2011).
\newblock {Optimal investment, consumption and retirement decision with
  disutility and borrowing constraints}.
\newblock {\em Quantitative Finance}, 11(10):1581--1592.

\bibitem[Maitra and Sudderth, 1996]{MS96}
Maitra, A. and Sudderth, W.~D. (1996).
\newblock {\em { The gambler and the stopper. In Statistics, Probability and
  Game Theory (T. S. Ferguson, L. S. Shapley and J. B. MacQueen, eds.)}},
  volume~30.
\newblock IMS, Hayward, CA.

\bibitem[Rendon, 2006]{Rendon2006}
Rendon, S. (2006).
\newblock {Job Search and Asset Accumulation under Borrowing Constraints}.
\newblock {\em International Economic Review}, 47(2):33--263.

\bibitem[Shim et~al., 2018]{SKS}
Shim, G., Koo, J., and Shin, Y. (2018).
\newblock {Reversible Job-Switching Opportunities and Portfolio Selection}.
\newblock {\em Applied Mathematics and Optimization}, 77:197--228.

\bibitem[Weerasinghe, 2006]{W06}
Weerasinghe, A. (2006).
\newblock {controller and a stopper game with degenerate variance control}.
\newblock {\em Electron. Commun. Probab}, 11:89–99.

\bibitem[Yang and Koo, 2018]{YK}
Yang, Z. and Koo, H. (2018).
\newblock {Optimal Consumption and Portfolio Selection with Early Retirement
  Option}.
\newblock {\em Mathematics of Operations Research}, 43(4):1378--1404.

\end{thebibliography}
\appendix

\section{Properties of $J_R(y)$}\label{sec:after-dual}

Proposition 4.1 in \citet{KMZ} is useful to get the analytic properties of $J_R(y)$ as well as the closed form solution of $J_R(y)$. Thus, we provide a representation of the proposition in our notation:
\begin{pro}[Proposition 4.1 in \citet{KMZ}]\label{pro:review:KMZ}
	Let $g(y)$ be an arbitrary measurable function defined on $(0,\infty)$. Then the following conditions are equivalent:
	\begin{itemize}
		\item[(i)] for every $y>0$ 
		$$
		\mathbb{E}\left[\int_0^\infty e^{-\d t}|g({\cal Y}_t)|dt \right]<\infty,
		$$
		\item[(ii)] for every $y>0$ 
		$$
		\int_0^y \eta^{-n_2-1} |g(\eta)|d\eta +\int_y^\infty \eta^{-n_1-1}|g(\eta)|d\eta <\infty. 
		$$
	\end{itemize}
	Let us denote $\Xi_g(y)$ by 
	$$
	\Xi_g(y) = \mathbb{E}\left[\int_0^\infty e^{-\d t} g({\cal Y}_t)dt\right]. 
	$$
	Under the condition (i) or (ii), the following statements are true:
	\begin{itemize}
		\item[(a)] $\liminf_{y\downarrow 0}y^{-n_2}|g(y)|=\liminf_{y\uparrow \infty}y^{-n_1}|g(y)|=0$,
		\item[(b)] $\Xi_g$ has a following form: 
			\begin{align*}
			\Xi_g (y) = \dfrac{2}{\t^2(n_1-n_2)}\left[y^{n_2}\int_0^y \eta^{-n_2-1}g(\eta)d\eta +y^{n_1}\int_y^\infty \eta^{-n_1-1}g(\eta)d\eta\right],
			\end{align*}
		\item[(c)] $\Xi_g$ is twice differentiable and 
		$$
		\dfrac{\t^2}{2}y^2\Xi_g''(y) + (\d-r)y\Xi_g'(y) -\d \Xi_g(y) +g(y)=0,
		$$
		\item[(d)] there exists a positive constant $C$ such that 
		$$
		|\Xi_g'(y)|\le C(y^{n_1-1}+y^{n_2-1})\;\;\;\mbox{for all}\;\;y>0,
		$$
		\item[(e)] $\lim_{t\to \infty} e^{-\d t} \mathbb{E}\left[|\Xi_g({\cal Y}_t)|\right]=0$. 
	\end{itemize}
\end{pro}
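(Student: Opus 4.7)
The plan is to reduce every claim to an explicit identification of the $\delta$-resolvent kernel of the geometric Brownian motion $\mathcal{Y}_t$. By Fubini--Tonelli,
$$\mathbb{E}\!\left[\int_0^\infty e^{-\delta t}|g(\mathcal{Y}_t)|\,dt\right]=\int_0^\infty |g(\eta)|\,G(y,\eta)\,d\eta,\qquad G(y,\eta):=\int_0^\infty e^{-\delta t}p_t(y,\eta)\,dt,$$
where $p_t(y,\cdot)$ is the (log-normal) transition density of $\mathcal{Y}$. The whole proposition then reduces to establishing the explicit form
$$G(y,\eta)=\frac{2}{\theta^2(n_1-n_2)\,\eta}\Bigl[(y/\eta)^{n_2}\mathbf{1}_{\{\eta\ge y\}}+(y/\eta)^{n_1}\mathbf{1}_{\{\eta\le y\}}\Bigr],$$
because formula (b) follows by splitting the integral in $\eta$ at $\eta=y$, and the asymptotics $G(y,\eta)\sim \text{const}(y)\,\eta^{-n_2-1}$ as $\eta\to\infty$ together with $G(y,\eta)\sim\text{const}(y)\,\eta^{-n_1-1}$ as $\eta\to 0$ yield the equivalence (i)$\Leftrightarrow$(ii) at once.

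To obtain the kernel I would use the standard fundamental-solution construction. The ODE $(\mathcal{L}-\delta)\phi=0$ is Euler-type with indicial polynomial exactly the quadratic defining $n_1>0>n_2$, so it admits the linearly independent solutions $\phi_+(y)=y^{n_1}$ and $\phi_-(y)=y^{n_2}$. The ansatz $G(y,\eta)=c\,\phi_-(y\vee\eta)\phi_+(y\wedge\eta)/\eta$ is automatically continuous at $y=\eta$, satisfies $(\mathcal{L}-\delta)G(\cdot,\eta)=0$ away from $\eta$, and the distributional jump condition $\tfrac{\theta^2}{2}\eta^2[\partial_y G]_{y=\eta}=1$ fixes $c=2/(\theta^2(n_1-n_2))$. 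That this really is the resolvent kernel can be verified either by direct Laplace-transformation of the log-normal density (a routine but tedious Gaussian computation) or, more conceptually, by applying It\^o to $e^{-\delta t}\Xi_g(\mathcal{Y}_t)$ with $\Xi_g$ \emph{defined} by the right-hand side of (b), noting that the resulting local martingale is a true martingale by the bounds established in the next step, and letting $t\to\infty$ to recover the expectation representation.

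The remaining items then drop out. For (a): if $\liminf_{\eta\downarrow 0}\eta^{-n_2}|g(\eta)|=c>0$ then $\eta^{-n_2-1}|g(\eta)|\ge (c/2)/\eta$ near $0$, forcing $\int_0^y\eta^{-n_2-1}|g|d\eta=\infty$ and contradicting (ii); the $\eta\to\infty$ case is symmetric. For (c), differentiate (b) twice; the boundary terms $g(y)/y$ cancel in the first derivative, and inserting the result into $\mathcal{L}-\delta$ leaves exactly $-g$ because $n_1,n_2$ are roots of the indicial polynomial. For (d), the identity
$$\Xi_g'(y)=\frac{2}{\theta^2(n_1-n_2)}\!\left[n_2y^{n_2-1}\!\int_0^y\eta^{-n_2-1}g(\eta)d\eta+n_1y^{n_1-1}\!\int_y^\infty\eta^{-n_1-1}g(\eta)d\eta\right]$$
is dominated in absolute value by $C(y^{n_1-1}+y^{n_2-1})$, with $C$ proportional to the (finite) integrals in (ii). Finally, for (e) the Markov property gives $e^{-\delta t}\Xi_g(\mathcal{Y}_t)=\mathbb{E}\bigl[\int_t^\infty e^{-\delta s}g(\mathcal{Y}_s)ds\mid\mathcal{F}_t\bigr]$, so $\mathbb{E}|e^{-\delta t}\Xi_g(\mathcal{Y}_t)|\le \mathbb{E}\!\int_t^\infty e^{-\delta s}|g(\mathcal{Y}_s)|ds$, which tends to $0$ by (i) and dominated convergence.

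The main obstacle is the rigorous identification of $G(y,\eta)$; once the explicit formula is in hand, everything else is bookkeeping. The It\^o/uniqueness route is shortest to write but requires care with the logical order: one first \emph{defines} $\Xi_g$ by the formula in (b), proves its analytic properties (including (c)--(d)) directly from the formula, and only then identifies it with the expectation via the martingale argument, since the martingale argument needs the growth bounds of (d) to pass to the limit $t\to\infty$.
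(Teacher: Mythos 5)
First, note that the paper itself offers no proof of this proposition: it is imported verbatim (rewritten in the paper's notation) from Proposition 4.1 of \citet{KMZ}, so your attempt is being compared against a citation rather than an in-paper argument. Your overall strategy --- write $\mathbb{E}\left[\int_0^\infty e^{-\d t}|g({\cal Y}_t)|dt\right]=\int_0^\infty |g(\eta)|G(y,\eta)\,d\eta$ with $G$ the $\d$-resolvent kernel of the geometric Brownian motion ${\cal Y}$, identify $G$ as the Green's function built from the Euler solutions $y^{n_1},y^{n_2}$ of $({\cal L}_0-\d)u=0$, and read off (i)$\Leftrightarrow$(ii) and (b) simultaneously, then (a), (c)--(e) --- is the standard and correct route. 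Your treatments of (a), (c) and (e) are fine; (d) needs one extra line, since (ii) does not give $\int_0^\infty\eta^{-n_2-1}|g|\,d\eta<\infty$: you must split $\int_0^y=\int_0^1+\int_1^y$ and use $\int_1^y\eta^{-n_2-1}|g|\,d\eta\le y^{n_1-n_2}\int_1^\infty\eta^{-n_1-1}|g|\,d\eta$ to obtain a single constant $C$.

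However, the explicit kernel you wrote down --- the linchpin of the entire argument --- is wrong. Matching statement (b), the correct kernel is
\begin{equation*}
G(y,\eta)=\frac{2}{\t^2(n_1-n_2)}\left[y^{n_2}\eta^{-n_2-1}{\bf 1}_{\{\eta\le y\}}+y^{n_1}\eta^{-n_1-1}{\bf 1}_{\{\eta\ge y\}}\right],
\end{equation*}
whereas your formula attaches $y^{n_2}\eta^{-n_2-1}$ to $\{\eta\ge y\}$ and $y^{n_1}\eta^{-n_1-1}$ to $\{\eta\le y\}$. Since $n_2<0$, your branch $\eta^{-n_2-1}$ on $\{\eta\ge y\}$ is not integrable at infinity, which already contradicts $\int_0^\infty G(y,\eta)\,d\eta=1/\d$; the asymptotics you then invoke to deduce (i)$\Leftrightarrow$(ii) are swapped accordingly and would produce the wrong weight functions. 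The source of the error is the ansatz $G=c\,\phi_-(y\vee\eta)\phi_+(y\wedge\eta)/\eta$: the correct factor is the speed density $\frac{2}{\t^2}\eta^{2(\d-r)/\t^2-2}=\frac{2}{\t^2}\eta^{-1-n_1-n_2}$, not $1/\eta$ (the two coincide only when $n_1+n_2=0$), and in fact your displayed formula is not even what your ansatz evaluates to, so the two errors are mutually inconsistent. Inserting the speed density and computing the Wronskian in natural scale, $w=\left(\psi'\phi-\psi\phi'\right)/s'=n_1-n_2$, yields the correct $G$ above, after which the rest of your argument goes through unchanged.
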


From Assumption \ref{as:utility2}, 
	\begin{align*}
	\mathbb{E}\left[\int_0^\infty {\cal H}_t I({\cal Y}_t)dt\right] =\dfrac{1}{y}\mathbb{E}\left[\int_0^\infty e^{-\d t} {\cal Y}_t I({\cal Y}_t)dt \right]<\infty. 
	\end{align*}

By Proposition \ref{pro:review:KMZ}, we have 
\begin{equation}\label{eq:c-hat-integral}
\int_0^y \eta^{-n_2}I(\eta)d\eta +\int_y^\infty \eta^{-n_1}I(\eta)d\eta<\infty.
\end{equation}

Let us denote ${\cal X}_R (y)$ by 
$$
{\cal X}_R (y) =\mathbb{E}\left[\int_0^\infty {\cal H}_t I({\cal Y}_t)dt\right]=\dfrac{1}{y}\mathbb{E}\left[\int_0^\infty e^{-\d t} {\cal Y}_t I({\cal Y}_t)dt \right]. 
$$

Since 
$$
{\cal X}_R (y) =\mathbb{E}\left[\int_0^\infty {\cal H}_t I(ye^{\d t}{\cal H}_t)dt\right],
$$

the monotone convergence theorem and the dominated convergence theorem imply that 
\begin{align}\label{eq:XR-inf}   
\lim_{y\downarrow 0}{\cal X}_R (y) =\mathbb{E}\left[\int_0^\infty \lim_{y\downarrow 0} {\cal H}_t I(ye^{\d t}{\cal H}_t)dt\right]=\infty
\end{align}
and 
\begin{align}
\lim_{y\uparrow \infty}{\cal X}_R (y) =\mathbb{E}\left[\int_0^\infty \lim_{y\uparrow \infty} {\cal H}_t I(ye^{\d t}{\cal H}_t)dt\right]=0.
\end{align}

By Proposition \ref{pro:review:KMZ}, we have 
	\begin{align*}
	{\cal X}_R(y) = \dfrac{2}{\t^2(n_1-n_2)}\left[y^{n_2-1}\int_0^y \eta^{-n_2}I(\eta)d\eta +y^{n_1-1}\int_y^\infty \eta^{-n_1}I(\eta)d\eta\right].
	\end{align*}
and 
$$
\liminf_{y\downarrow 0}y^{-n_2+1}I(y)=\liminf_{y\uparrow \infty}y^{-n_1+1}I(y)=0.
$$
By utilizing the integration by parts for Riemann-Stieltjes integral, we have 
	\begin{align*}
	{\cal X}_R'(y)=& \dfrac{2}{\t^2(n_1-n_2)}\left[(n_2-1)y^{n_2-2}\int_0^y \eta^{-n_2}I(\eta)d\eta +(n_1-1)y^{n_1-2}\int_y^\infty \eta^{-n_1}I(\eta)d\eta\right]\\
	=&\dfrac{2}{\t^2(n_1-n_2)}\left[y^{n_2-2}\liminf_{\eta\downarrow 0}(\eta^{1-n_2}I(\eta))-y^{n_1-2}\liminf_{\eta\uparrow \infty}(\eta^{1-n_1}I(\eta))+y^{n_2-2}\int_0^y \eta^{1-n_2}dI(\eta)\right.\\ +&\left.y^{n_1-2}\int_y^\infty \eta^{1-n_1}dI(\eta)\right].
	\end{align*}

Since $I(\eta)$ is strictly decreasing in $0<\eta\le u'(0)$, we deduce that ${\cal X}_R(y)$ is strictly decreasing in $y>0$, i.e., 
\begin{equation}\label{eq:sign-X-prime}
{\cal X}_R'(y)<0. 
\end{equation}

Note that for any $y>0$ 
\begin{equation*}
u(I(y))-yI(y)=\tilde{u}(y)\;\;\mbox{and}\;\;\tilde{u}'(y)=-I(y),
\end{equation*}
and thus
\begin{equation*}
u(I(\eta))=u(I(y))-yI(y)+\eta I(\eta)+\int_{\eta}^y I(\eta)d\eta. 
\end{equation*}

Since $\mathbb{E}\left[\int_0^\infty e^{-\d t} {\cal Y}_t I({\cal Y}_t)dt \right]<\infty$, it follows from Proposition \ref{pro:review:KMZ} that 
	\begin{eqnarray}
	\begin{split}\label{eq:u-integral}
	&\int_0^y \eta^{-n_2-1}|u(I(\eta))|d\eta +\int_y^\infty \eta^{-n_1-1}|u(I(\eta))|d\eta\\
	\le-&\dfrac{1}{n_2}y^{-n_2}|u(I(y))-yI(y)|+\int_0^y \eta^{-n_2}I(\eta)d\eta +\int_0^y \int_\eta^y \eta^{-n_2-1}I(\eta)d\eta d\eta\\ +&\dfrac{1}{n_1}y^{-n_1}|u(I(y))-yI(y)|+\int_y^\infty \eta^{-n_1}I(\eta)d\eta +\int_y^\infty \int_y^\eta \eta^{-n_1-1}I(\eta)d\eta d\eta\\
	=-&\dfrac{1}{n_2}y^{-n_2}|u(I(y))-yI(y)|+\dfrac{1}{n_1}y^{-n_1}|u(I(y))-yI(y)|\\&+\left(1+\dfrac{1}{n_1}\right)\int_y^\infty \eta^{-n_1}I(\eta)d\eta +\left(1-\dfrac{1}{n_2}\right)\int_0^y \eta^{-n_2}I(\eta)d\eta <\infty. 
	\end{split}
	\end{eqnarray}
where we have used Fubini's theorem in last equality.

From \eqref{eq:c-hat-integral} and \eqref{eq:u-integral}, 
\begin{equation}\label{eq:utilde-integral}
\int_0^y \eta^{-n_2-1}|\tilde{u}(\eta)|d\eta +\int_y^\infty \eta^{-n_1-1}|\tilde{u}(\eta)|d\eta<\infty.
\end{equation}

By Proposition \ref{pro:review:KMZ}, we deduce that 
	\begin{equation}
	J_R(y)= \dfrac{2}{\t^2(n_1-n_2)}\left[y^{n_2}\int_0^y \eta^{-n_2-1}\tilde{u}(\eta)d\eta +y^{n_1}\int_y^\infty \eta^{-n_1-1}\tilde{u}(\eta)d\eta\right]
	\end{equation}
and 
\begin{equation}\label{eq:J_R-ODE}
\dfrac{\t^2}{2}y^2 J_R''(y)+(\d-r)yJ_R'(y)-\d J_R + \tilde{u}(y)=0. 
\end{equation}

Moreover, it follows from \eqref{eq:utilde-integral} that 
$$
\liminf_{y\downarrow 0}y^{-n_2}|\tilde{u}(y)|=\liminf_{y\uparrow \infty}y^{-n_1}|\tilde{u}(y)|=0.
$$
Therefore, the integration by parts implies that 
	\begin{eqnarray}
	\begin{split}\label{eq:J0-prime}
	J_R'(y)=&\dfrac{2}{\t^2(n_1-n_2)}\left[n_2y^{n_2-1}\int_0^y \eta^{-n_2-1}\tilde{u}(\eta)d\eta +n_1y^{n_1-1}\int_y^\infty \eta^{-n_1-1}\tilde{u}(\eta)d\eta\right]\\
	=&-\dfrac{2}{\t^2(n_1-n_2)}\left[y^{n_2-1}\int_0^y \eta^{-n_2}I(\eta)d\eta +y^{n_1-1}\int_y^\infty \eta^{-n_1}I(\eta)d\eta\right]=-{\cal X}_R(y).
	\end{split}
	\end{eqnarray}

From \eqref{eq:sign-X-prime}, we have $J_R''(y) = -{\cal X}_R'(y)>0$. That is, $J_R(y)$ is strictly convex in $y>0$.


\end{document}